\DeclareOldFontCommand{\rm}{\normalfont\rmfamily}{\mathrm}
\DeclareOldFontCommand{\sf}{\normalfont\sffamily}{\mathsf}
\DeclareOldFontCommand{\tt}{\normalfont\ttfamily}{\mathtt}
\DeclareOldFontCommand{\bf}{\normalfont\bfhttps://www.overleaf.com/project/62022d046a9b74af4550fc2aseries}{\mathbf}
\DeclareOldFontCommand{\it}{\normalfont\itshape}{\mathit}
\DeclareOldFontCommand{\sl}{\normalfont\slshape}{\@nomath\sl}
\DeclareOldFontCommand{\sc}{\normalfont\scshape}{\@nomath\sc}
\tikzstyle{every pin}=[
\tikzstyle{block} = [draw, fill=blue!20, rectangle, minimum height=3em, minimum width=6em]
\tikzstyle{sum} = [draw, fill=blue!20, circle, node distance=1cm]
\tikzstyle{input} = [coordinate]
\tikzstyle{output} = [coordinate]
\tikzstyle{pinstyle} = [pin edge={to-,thin,black}]
\ltdsetup{\today}{}{}
\newcommand{\newL}{\tilde{\mathcal{L}}}
\newcommand{\newJ}{\tilde{\mathcal{J}}}
\newtheorem{theorem}{Theorem}[section]
\newtheorem{definition}[theorem]{Definition}
\newtheorem{proposition}[theorem]{Proposition}
\newtheorem{remark}[theorem]{Remark}
\newtheorem{corollary}[theorem]{Corollary}
\title{Variational integrators for a new Lagrangian approach to 
control affine systems with a quadratic Lagrange term}
\date{\today}
\author
{
Michael Konopik\footnote{\textit{First author}, \textit{corresponding author}. Friedrich-Alexander-Universität Erlangen-Nürnberg (FAU), Institute of Applied Dynamics (LTD), Immerwahrstrasse 1, 91058 Erlangen, Germany. Email: \href{mailto:michael.konopik@fau.de}{michael.konopik@fau.de}}\ \thanks{The work of this author has been supported by Deutsche Forschungsgemeinschaft (DFG), Grant No. LE 1841/12-1, AOBJ: 692092.}\ \qquad
Sigrid Leyendecker\footnote{Friedrich-Alexander-Universität Erlangen-Nürnberg (FAU), Institute of Applied Dynamics (LTD), Immerwahrstrasse 1, 91058 Erlangen, Germany. Email: \href{mailto:sigrid.leyendecker@fau.de}{sigrid.leyendecker@fau.de}}\ \qquad
 Sofya Maslovskaya\footnote{Universität Paderborn (UPB), Numerical Mathematics and Control (NMC), Warburger Straße 100, 33098 Paderborn, Germany. Email: \href{mailto:sofya.maslovskaya@upb.de}{sofya.maslovskaya@upb.de}}\ \thanks{The work of this author has been supported by Deutsche Forschungsgemeinschaft (DFG), Grant No. OB 368/5-1, AOBJ: 692093}
\\
Sina Ober-Bl\"obaum\footnote{  Universität Paderborn (UPB), Numerical Mathematics and Control (NMC), Warburger Straße 100, 33098 Paderborn, Germany. Email: \href{mailto:sinaober@math.uni-paderborn.de}{sinaober@math.uni-paderborn.de}}\
\ \qquad
Rodrigo T.~Sato Mart{\'\i}n de Almagro\footnote{\textit{First author}. Friedrich-Alexander-Universität Erlangen-Nürnberg (FAU), Institute of Applied Dynamics (LTD), Immerwahrstrasse 1, 91058 Erlangen, Germany. Email: \href{mailto:rodrigo.t.sato@fau.de}{rodrigo.t.sato@fau.de}}\
}
\begin{document}
\maketitle
\section*{Abstract}
 In this work, we analyse the discretisation of a recently proposed new Lagrangian approach to optimal control problems of affine-controlled second-order differential equations with cost functions quadratic in the controls. We propose exact discrete and semi-discrete versions of the problem, providing new tools to develop numerical methods. Discrete necessary conditions for optimality are derived and their equivalence with the continuous version is proven. A family of low-order integration schemes is devised to find approximate optimality conditions, and used to solve a low-thrust orbital transfer problem. Non-trivial equivalent standard direct methods are constructed. Noether's theorem for the new Lagrangian approach is investigated in the exact and approximate cases.

    \vspace{2mm}

    \textbf{\bfseries \sffamily Keywords:} \textit{optimal control}, \textit{mechanical systems}, \textit{variational methods}, \textit{necessary conditions}, \textit{discretization}, \textit{discrete mechanics}, \textit{symplectic methods}, \textit{numerical integration}, \textit{symmetries}.

    \vspace{2mm}
    
\textbf{\bfseries \sffamily Mathematics Subject Classification:} 37M15,
49K15,
49M05,
49M25,
65K10,
65P10,
70H15,
70G45,
70G65,
70Q05.

\section{Introduction}

Optimal control of mechanical systems is of great importance both in engineering applications, e.g.~in robotics, and from a theoretical point of view. Such problems naturally possess a rich internal structure which can have important consequences for the overall behaviour of the system.\\
Indeed, on the one hand, the configuration of a forced mechanical system evolves on a manifold $\mathcal{Q}$, and its evolution can be described by the Lagrange-D'Alembert principle, leading to second-order differential equations with flows on either the velocity or momentum-phase space, $T\mathcal{Q}$ or $T^*\mathcal{Q}$ respectively. The latter space comes naturally equipped with a symplectic structure which, under some regularity assumptions, can be carried over to the former. When subjected solely to conservative forces, these flows can be shown to be symplectic, i.e.~they preserve this symplectic structure. Besides, by the celebrated Noether's theorem, we know that the symmetries of a mechanical system lead to conserved quantities \cite{ArnoldMethods,AbrahamMarsden}.\\
On the other hand, the necessary conditions for optimality for an optimal control problem (OCP) of a dynamical system evolving on a state space $\mathcal{M}$ are provided by Pontryagin's maximum / minimum principle (PMP) \cite{pontryagin1964}, whose overall structure is Hamiltonian. In the PMP, the state of the dynamical system, its accompanying costate or adjoint state and the control variables evolve on a presymplectic manifold. Moreover, at the optimum, the resulting flow in state-adjoint space, $T^*\mathcal{M}$, is symplectic. Also the Hamiltonian version of Noether's theorem can be applied in this setting \cite{SussmannSymmetries93,Blankenstein01,EcheverriaEnriquez03}. In the case of control of mechanical systems the state space $\mathcal{M}$ can be either velocity or momentum-phase space, leading to an interesting nested structure for the total problem.\\
Note that the PMP can be regarded as a non-smooth generalisation of variational calculus, and thus, under certain regularity assumptions, OCPs can be treated with the latter. This makes the parallels between optimal control and Lagrangian mechanics even more obvious and it can also expose further geometric facets of the problem.\\
%

In a recent article 
\cite{leyendecker2024new} a new Lagrangian approach to optimal control of second-order systems was proposed. The authors exploit this variational structure, and by performing integration by parts before the process of variation takes place, they were able to introduce a new hyperregular Lagrangian with state positions and velocity adjoints as configuration variables. Several of the optimality conditions of the OCP are transformed into Euler-Lagrange equations. They also explored an associated new Hamiltonian formulation and some consequences of the symmetries of the new Lagrangian.

Since analytical solutions for most of these problems are generally not available, we need to rely on numerical methods to approximate them. However, numerical methods do not generally respect the previously outlined structure. This is precisely where geometric or structure-preserving integration comes into the picture.\\
Given the variational structure of the OCP and the regularity of the new Lagrangian, variational integrators \cite{marsden2001discrete} become a natural choice to construct such structure preserving methods. These discretise the variational principle itself, are known to be symplectic and are built over solid theoretical grounds. In particular, they are based on approximations of the so-called exact discrete Lagrangian, which bridges the realms of discrete mechanics on $\mathcal{Q} \times \mathcal{Q}$ and continuous mechanics on $T\mathcal{Q}$.\\


The aim of this work is to provide a discrete counterpart to \cite{leyendecker2024new}, exploring and analysing the discretisation of this formulation in the vein of discrete mechanics. For this, we define corresponding exact discrete control-independent and control-dependent Lagrangians. These let us analyse the discrete new Lagrangian formulation with generality and allow us to approximate them with warranties. Moreover, we expect these ideas to become important tools in the future.

Typical approaches to solving optimal control problems numerically can be usually split into two categories \cite{formalskii10,betts2010}. The first class corresponds to \textit{optimise-then-discretise} or \textit{indirect discretisation} approaches where the minimisation is first carried out using the PMP or variational calculus. Then, the resulting necessary optimality conditions are discretised and solved numerically \cite{pontryagin1964,BrysonHo75}. 
The second category is the so-called \textit{discretise-then-optimise} or \textit{direct discretisation} approach where the augmented cost function is first discretised and the resulting discrete cost function is optimised. Variation of this discrete cost leads to Karush-Kuhn-Tucker conditions \cite{betts2010,campos15,betsch2017conservation,bottasso2004optimal}. Discrete mechanics and optimal control (DMOC) \cite{oberbloebaum_diss}, where the dynamics and objective function are discretised using a discrete mechanics approach \cite{marsden2001discrete}, belongs to this category. 
Approaches of both categories are related by symplectic discretisation schemes, that utilise the symplectic structure of state-adjoint space. 
These relationships were analysed e.g. in \cite{hager00,bonnans04,sanz-serna2015,ober2011discrete}.

In the paper we construct both discrete control-dependent and control-independent Lagrangians. Since the latter are obtained by discretising a control Lagrangian where the controls have been eliminated via minimisation, the resulting methods can be regarded as indirect. However, our discrete control-dependent Lagrangians lie squarely in the direct camp. By establishing their equivalence, we find a bridge between both approaches.\\

Starting with Section \ref{exactsection}, exact discrete necessary conditions for optimality 
are derived and analysed 
both when controls are left as parameters and also when the controls have been eliminated by imposing minimisation. After having analysed the exact discrete optimality conditions,  a family of low-order methods is derived in Section \ref{loworderapproxsection} by approximating 
the exact discrete new Lagrangians. 
The resulting discrete optimality conditions are then analysed and shown to be equivalent for the control-dependent and independent case.

After having analysed the properties of the new Lagrangian formulation 
in the discrete setting, we then shift our focus to compare it 
with other, more common discretisation approaches. In Section \ref{comparison_section}, we consider two direct approaches; the first discretises the state equations in the form of second-order differential equations, while the second approach discretises the state equations in form of first-order differential equations. We show how it is possible to construct equivalent methods to our low-order integration schemes, provided the discretisation in these direct approaches is carried out judiciously.

In Section \ref{symmetrysection}, Noether's theorem for the optimal control Lagrangians are proven both in the exact and approximate control-dependent cases, showing that affine symmetry actions are preserved by the chosen family of low-order integration schemes. The preservation of symplecticity in the state-adjoint space of numerical methods derived from the new control Lagrangians is discussed.

Finally, in Section \ref{examplesection}, an example optimal control problem in the form of a low-thrust orbital transfer is solved via the family of low-order integration schemes proposed showing well-behaved solution curves even for first-order methods and a small number of integration steps.

\section{Necessary conditions for optimality in the continuous and discrete setting}
\label{exactsection}
Before we start with the discussion of the optimal control problem in the discrete setting, let us summarise the spaces we work in, first in the continuous setting, and summarize the key results of the continuous treatment \cite{leyendecker2024new}, which will be used for comparison and context later.
\subsection{Continuous setting}

Following \cite{leyendecker2024new}, we consider a particular type of optimal control problem, namely, of affine-controlled second-order ordinary differential equations (SODEs) and running costs quadratic in the controls.\\

Let $(\mathcal{Q},\mathrm{g}_{\mathcal{Q}})$ be a Riemannian manifold, and consider its tangent bundle $\tau_{\mathcal{Q}}: T \mathcal{Q} \to \mathcal{Q}$. Further, let $\pi^{\mathcal{E}}: \mathcal{E} \to \mathcal{Q}$ be an anchored vector bundle with typical fibre $\mathcal{N}$, $\dim \mathcal{Q} \geq \dim \mathcal{N}$, and injective linear anchor $\rho: \mathcal{E} \to T\mathcal{Q}$, $\tau_\mathcal{Q} \circ \rho = \pi^{\mathcal{E}}$, i.e. a vector bundle morphism over the identity. We say $\mathcal{Q}$ is our configuration space, $T\mathcal{Q}$ is our state space, $\mathcal{E}$ is our control bundle and $\mathcal{N}$ our control space. We assume adapted local coordinates $(q,v) \in T\mathcal{Q}$ and $(q,u) \in \mathcal{E}$ so that $(q,v,u) \in T\mathcal{Q} \oplus_{\mathcal{Q}} \mathcal{E}$, the Whitney sum of both bundles. $\mathrm{g} \equiv \mathrm{g}_{\mathcal{E}} = \rho^* \mathrm{g}_{\mathcal{Q}}$ defines a Riemannian metric on our space of controls.\\

The optimal control problem (OCP) under consideration is that of finding curves\footnote{We incur in a tacit abuse of notation by using the same symbol for either points and curves. However, we believe that context makes the distinction quite clear.} $q \in C^3([0,T],\mathcal{Q})$ and $u \in C^1([0,T],\mathcal{N})$ such that
\begin{subequations}
	\label{problem_definition}
	\begin{alignat}{2}
		     \min_{(q,u)} \quad &&      \mathcal{J}[q,u] &= \phi(q(T), \dot{q}(T)) + \int_0^{T} \frac{1}{2} \,u(t)^{\top}\mathrm{g}(q(t))\, u(t)~dt \label{runningCost}\\
		\text{subject to} \quad &&        q(0) &= q^0,\label{initialqcondi}\\
		                        &&  \dot{q}(0) &= \dot{q}^0,\label{initialvcondi}\\
		                        && \ddot{q}(t) &= f(q(t),\dot{q}(t))+\rho(q(t)) \, u(t),\label{SODE}
	\end{alignat}
\end{subequations}
where \eqref{SODE} is an affine-controlled SODE, which may be interpreted as a submanifold of the second-order tangent bundle of $\mathcal{Q}$, $T^{(2)} \mathcal{Q}$ \cite{deLeonRodrigues85}, and $(q^0,\dot{q}^0) \in T\mathcal{Q}$. We assume
$\phi, f, \rho, \mathrm{g}$ are continuously differentiable.

Note that whenever $\dim \mathcal{Q} = \dim \mathcal{N}$, by the properties of the anchor, $\rho$ establishes a vector bundle isomorphism between $T\mathcal{Q}$ and $\mathcal{E}$. We say then, that \eqref{SODE} is fully-actuated. If $\dim \mathcal{Q} < \dim \mathcal{N}$, then \eqref{SODE} is under-actuated.\\

Under our conditions, the OCP posed by \eqref{problem_definition} can be equivalently restated as the unconstrained minimisation of the augmented cost function
\begin{align}
    \hat{\mathcal{J}}^{\mathcal{E}}[y,u,\mu,\nu]& = \phi(q(T),\dot{q}(T)) + \mu^{\top}(q(0) - q^0) + \nu^{\top} (\dot{q}(0) - \dot{q}^0) \nonumber\\
	&+ \int_0^{T} \left[ \frac{1}{2} \, u(t)^{\top}\mathrm{g}(q(t))\, u(t) + \lambda^{\top} \left( \ddot{q} - f(q,\dot{q}) + \rho(q) \,u \right)\right] \,dt, \label{augmentedObjective}
\end{align}
Here, $\lambda$ and $(\mu, \nu)$ play the role of Lagrange multipliers for the SODE and the initial conditions respectively. The first receives the name of costate or adjoint variable and it acts as a covector, i.e. for $q \in \mathcal{Q}$, $\lambda \in T_{q}^{*} \mathcal{Q}$ and thus $y = (q,\lambda) \in T^* \mathcal{Q}$.  We take the opportunity here to introduce the canonical projection $\pi_{\mathcal{Q}}: T^*\mathcal{Q} \to \mathcal{Q}$, $\pi_{\mathcal{Q}}(q,\lambda) = q$. We assume that $\hat{\mathcal{J}}^{\mathcal{E}}$ is now defined over curves $y \in C^3([0,T],T^*\mathcal{Q})$, though it is sufficient for the curve $\lambda$ to be only twice continuously differentiable.\\

Taking variations of the augmented objective $\delta \hat{\mathcal{J}}^{\mathcal{E}}$ and requiring its stationarity then results in the following necessary optimality conditions for the optimal control problem.
\begin{subequations}
\label{eulerLagrangeufull}
	\begin{alignat}{2}
    \text{(adjoint dynamics) } && \ddot \lambda &=  \left(\frac{\partial}{\partial q}f(q,\dot{q}) + \frac{\partial}{\partial q}\rho(q)u\right)^{\top}\lambda - \frac{d}{dt}\left(\left(\frac{\partial}{\partial \dot{q}}f(q,\dot{q})\right)^{\top}\lambda\right) - \frac{1}{2}\left(\frac{\partial}{\partial q}\mathrm{g}(q)(u,u)\right)^{\top}\!, \label{eulerLagrangeu1}\\
    \text{(state dynamics) } && \ddot q &= f(q,\dot{q}) + \rho(q) u, \label{eulerLagrangeu2}\\
    \text{(minimisation) } && \mathrm{g}(q)u &= \rho(q)^{\top} \lambda,\label{eulerLagrangeu3}\\
        \text{(transversality) } && \mu &= - \dot{\lambda}(0) - \frac{\partial}{\partial \dot{q}} f(q(0),\dot{q}(0)),\label{boundaryend1}\\
        && \nu &= \lambda(0)\,. \label{boundaryend2}\\
        && \dot{\lambda}(T) &= \frac{\partial }{\partial q}\phi(q(T),\dot{q}(T))^{\top} + \frac{\partial}{\partial \dot{q}} f(q(T),\dot{q}(T))^{\top} \frac{\partial}{\partial \dot{q}}\phi(q(T),\dot{q}(T))^{\top},      \label{boundaryend3}\\
        && \lambda(T) &= -\frac{\partial}{\partial \dot{q}}\phi(q(T),\dot{q}(T))^{\top}.      \label{boundaryend4}
    \end{alignat}
\end{subequations}

\begin{remark}
The transversality conditions \eqref{boundaryend2} and \eqref{boundaryend4} are those associated with variations $\delta \dot{q}(0)$ and $\delta \dot{q}(T)$, respectively. Once in our discrete setting, such variations will not appear naturally in the formulation. However, it will be possible to recuperate them as will be shown.
\end{remark}

\begin{remark}
\label{rmk:OCP_first_order}
In the literature it is standard to construct a different augmented cost function, by first transforming the controlled SODE \eqref{SODE} into a system of ODEs,
\begin{align*}
    \dot{q}(t) &= v(t),\\
    \dot{v}(t) &= f(q(t),v(t)) + \rho(q(t)) u(t),
\end{align*}
and then using this to augment the cost function \eqref{runningCost},
\begin{align}
	\bar{\mathcal{J}}^{\mathcal{E}}[z,u,\mu,\nu]& = \phi(q(T),\dot{q}(T)) + \mu^{\top}(q(0) - q^0) + \nu^{\top} (v(0) - \dot{q}^0) \nonumber\\
	&+ \int_0^{T} \left[ \frac{1}{2} \, u(t)^{\top}\mathrm{g}(q(t))\, u(t) + \lambda_q^{\top} \left( \dot{q} - v \right) + \lambda_v^{\top} \left( \dot{v} - f(q,\dot{q}) + \rho(q) \,u \right)\right] \,dt.\label{eq:standardAugmentedObjective}
\end{align}
Here $z$ is a curve in $T^*(T\mathcal{Q})$ and local coordinates $(q,v,\lambda_q,\lambda_v)$ are being used to express a point on this space. In \cite{leyendecker2024new}, it was shown that both \eqref{augmentedObjective} and this one lead to the same extremals and that it was possible to identify $\lambda$ with $\lambda_v$.\\
Another thing that will be important later on is the fact that $\lambda_q = - \dot{\lambda} - \frac{\partial}{\partial \dot{q}} f(q,\dot{q})$. This means that \eqref{boundaryend1} and \eqref{boundaryend2} reduce to $\mu = \lambda_q(0)$ and $\nu = \lambda_v(0)$, while \eqref{boundaryend3} and \eqref{boundaryend4} reduce to $- \frac{\partial}{\partial q} \phi(q(T),\dot{q}(T)) = \lambda_q(N)$ and $-\frac{\partial}{\partial \dot{q}} \phi(q(T),\dot{q}(T)) = \lambda_v(N)$.
\end{remark}

\begin{definition}
\label{def:OCP_regularity}
Stationarity with respect to variations of $u$ of the augmented cost function leads to algebraic equations. We say that an OCP for a controlled SODE is algebraically regular if said equation is locally solvable for $u$, i.e. there exists open $\mathcal{V} \subset T\mathcal{Q} \oplus_{\mathcal{Q}} T^*\mathcal{Q} \oplus_{\mathcal{Q}} \mathcal{E}$ on which the conditions for the implicit function theorem are satisfied for each $(q,v,\lambda,u) \in \mathcal{V}$.
\end{definition}

Clearly, \eqref{augmentedObjective} is algebraically regular, since by our assumptions $\mathrm{g}$ is invertible and so \eqref{eulerLagrangeu3} is globally solvable, leading to a linear relation between $u$ and $\lambda$.\\

The approach proposed in \cite{leyendecker2024new} reformulates \eqref{augmentedObjective} by applying partial integration on the running cost in order to eliminate $\ddot q$ in favour of $\dot{\lambda}$. This leads to the following equivalent augmented cost function
\begin{align}
	\tilde{\mathcal{J}}^{\mathcal{E}}[y,u,\mu,\nu]& = \phi(q(T),\dot{q}(T)) + \mu^{\top}(q(0) - q^0) + \nu^{\top} (\dot{q}(0) - \dot{q}^0) + \lambda(T)^{\top}\dot{q}(T) - \lambda(0)^{\top}\dot{q}(0) \nonumber\\
	&+ \int_0^{T} \left[\frac{1}{2} \, u(t)^{\top}\mathrm{g}(q(t))\, u(t) - \dot\lambda(t)^{\top} \dot{q}(t) -\lambda(t)^{\top} (f(q(t),\dot{q}(t)) + \rho(q(t)) u(t))\right] \,dt, \label{newAugmentedObjective}
\end{align}
The new augmented running cost can be interpreted as a $u$-parametric family of Lagrangians for a system on $T^*\mathcal{Q}$. Thus, $\tilde{\mathcal{L}}^{\mathcal{E}}: T T^*\mathcal{Q} \oplus_{\mathcal{Q}} \mathcal{E} \to \mathbb{R}$,
\begin{equation}
\label{eq:newControlDepLag}
\tilde{\mathcal{L}}^{\mathcal{E}}(q,\lambda,v,v_{\lambda},u) =  v_{\lambda}^{\top} v + \lambda^{\top} (f(q,v) + \rho(q) u) - \frac{1}{2} \, u^{\top}\mathrm{g}(q)\, u
\end{equation}
defines a \textit{new control-dependent Lagrangian} for the OCP under study.

\begin{remark}
Note that the definition of the new control-dependent Lagrangian we provide here differs by a minus sign from that originally proposed in \cite{leyendecker2024new}. The reason for this redefinition is mostly for future convenience \cite{Konopik25b}. This will be clarified and expanded on a future publication where we extend and further analyse the new Lagrangian approach \cite{Konopik25b}.
\end{remark}

In \cite{leyendecker2024new} it was shown that the Euler-Lagrange equations for this new control-dependent Lagrangian for the OCP are precisely \eqref{eulerLagrangeu1} and \eqref{eulerLagrangeu2}. Moreover, when regarding $u$ as a parameter, this new Lagrangian is hyperregular, i.e. for each $u$, the fibre derivatives
\begin{subequations}
\label{contLegendre}
	\begin{alignat}{2}
        p_q &= \frac{\partial}{\partial v}\tilde{\mathcal{L}}^{\mathcal{E}}(q,\lambda,v,v_{\lambda},u) &&= v_{\lambda} + \frac{\partial}{\partial v}f(q,v)^{\top} \lambda\,,\\
        p_\lambda &= \frac{\partial}{\partial v_{\lambda}} \tilde{\mathcal{L}}^{\mathcal{E}}(q,\lambda,v,v_{\lambda},u) &&= v\,,
    \end{alignat}
\end{subequations}
establish a diffeomorphism between $T(T^*\mathcal{Q})$ and $T^*(T^*\mathcal{Q})$.

\begin{remark}
\label{rmk:pLambdaEqualVelocity}
It is important to note that the canonical momentum associated to the costate $\lambda$, $p_\lambda$, coincides with the state velocity $v$. This will be used extensively in what follows.
\end{remark}

As previously mentioned, given the algebraic regularity of our OCP, it is always possible to obtain $u(q,v,\lambda)$ via \eqref{eulerLagrangeu3}. Substitution into \eqref{eq:newControlDepLag} leads to the definition of the new Lagrangian for the OCP
\begin{equation}
\tilde{\mathcal{L}}(q,\lambda,v,v_{\lambda}) =  v_{\lambda}^{\top} v + \lambda^{\top} f(q,v) + \frac{1}{2} \, \lambda^{\top} b(q)\, \lambda,
\label{eq:newLagrangianNoU}
\end{equation}
where $b(q) = \rho(q) \, \mathrm{g}(q)^{-1} \, \rho(q)^\top$. The Euler-Lagrange equations associated to this new Lagrangian are
\begin{subequations}
\begin{align}
        &\ddot \lambda = \frac{1}{2}\left(\frac{\partial}{\partial q}b(q)(\lambda,\lambda)\right)^{\top}  + \left( \frac{\partial}{\partial q} f(q,\dot{q})\right)^{\top} \lambda-\frac{d}{dt}\left(\left(\frac{\partial}{\partial \dot{q}}f(q,\dot{q})\right)^{\top}\lambda\right)=0 \label{eulerLagrange_no_u1}\\
        &\ddot q = f(q,\dot{q}) + b(q)\lambda \label{eulerLagrange_no_u2}
    \end{align}
    \label{eulerLagrange_no_u}
\end{subequations}
which coincide precisely with the adjoint and state dynamics in \eqref{eulerLagrangeu1} and \eqref{eulerLagrangeu2} respectively, after substitution of this $u(q,v,\lambda)$. These are the necessary conditions for optimality of
\begin{align}
	\tilde{\mathcal{J}}[y,\mu,\nu]& = \phi(q(T),\dot{q}(T)) + \mu^{\top}(q(0) - q^0) + \nu^{\top} (\dot{q}(0) - \dot{q}^0) + \lambda(T)^{\top}\dot{q}(T) - \lambda(0)^{\top}\dot{q}(0)\nonumber\\
	&- \int_0^{T} \left[\frac{1}{2} \, \lambda(t)^{\top} b(q(t))\, \lambda(t) + \dot\lambda(t)^{\top} \dot{q}(t) + \lambda^{\top} f(q(t),\dot{q}(t))\right] \,dt, \label{newAugmentedObjective_no_u}
\end{align}

\subsection{Exact discrete setting}

In this section we proceed to discretise the previously outlined theory using the framework of discrete mechanics. For a detailed introduction to discrete mechanics and its relation to numerical integration, refer to \cite{marsden2001discrete}. In particular, its Section 1.1.1, provides a complete historical breakdown and further references. For its application to optimal control, refer to \cite{oberbloebaum_diss}.\\

Our approach begins by exploring the exact discrete setting, which links the continuous setting with the discrete one and then exploring approximations in order to obtain structure-preserving numerical integration schemes.\\

Consider the augmented cost function $\tilde{\mathcal{J}}[y]$, \eqref{newAugmentedObjective_no_u}, now written in terms of the new Lagrangian for the OCP defined in \eqref{eq:newLagrangianNoU}:
\begin{equation*}
	\tilde{\mathcal{J}}[y,\mu,\nu] = \phi(q(T),\dot{q}(T)) + \mu^{\top}(q(0) - q^0) + \nu^{\top} (\dot{q}(0) - \dot{q}^0) + \lambda(T)^{\top}\dot{q}(T) - \lambda(0)^{\top}\dot{q}(0) - \int_0^{T} \tilde{\mathcal{L}}(y(t),\dot{y}(t)) \,dt.
\end{equation*}
Notice that the integral part coincides with the Lagrangian action associated with $\tilde{\mathcal{L}}$. Then, this integral part reduces to
\begin{equation}
\label{eq:lagrangian_action}
\mathcal{S}[y] = \int_0^{T} \tilde{\mathcal{L}}(y(t),\dot{y}(t))\, dt\,,
\end{equation}
which is a standard Lagrangian action for a Lagrangian system on $T^*\mathcal{Q}$ and can be readily discretised following \cite{marsden2001discrete}. In particular, one defines the \textit{exact discrete Lagrangian},
\begin{equation}
\label{eq:exact_discrete_new_Lagrangian}
\tilde{\mathcal{L}}_d^e(y_k,y_{k+1},h) = \int_{0}^{h} \tilde{\mathcal{L}}(y_{k,k+1}(t),\dot{y}_{k,k+1}(t))\, dt\,,
\end{equation}
where $y_{k,k+1}: [0,h] \to T^*\mathcal{Q}$ is a solution of the Euler-Lagrange equations \eqref{eulerLagrange_no_u1} and \eqref{eulerLagrange_no_u2} satisfying that $y_{k,k+1}(0) = y_k$ and $y_{k,k+1}(h) = y_{k+1}$. This solution is warranted to exist and be unique for sufficiently small $h$ and sufficiently close $y_k$ and $y_{k+1}$ so the exact discrete Lagrangian is well-defined. Despite it only being defined under such conditions it is customary to extend its domain for simplicity, writing $\tilde{\mathcal{L}}_d^e: T^*\mathcal{Q} \times T^*\mathcal{Q} \times \mathbb{R} \to \mathbb{R}$.\\

Let $T_d = \left\lbrace t_k = kh \;\colon\; k= 0,\dots, N, N h = T\right\rbrace$ be a uniform temporal grid discretising the continuous interval $[0,T]$ and consider a discrete curve $y_d: T_d \to T^*\mathcal{Q}$, i.e. a collection of ordered points $\left\lbrace y_{0}, ..., y_N \right\rbrace$. Then, one constructs a discrete analogue of \eqref{eq:lagrangian_action}
\begin{equation}
\mathcal{S}_d^e[y_d] = \sum_{k = 0}^{N-1} \tilde{\mathcal{L}}_d^e(y_k,y_{k+1},h)\,.
\label{discretActiondef}
\end{equation}
Applying Hamilton's principle to this discrete action with fixed $y_0$ and $y_N$ one obtains the \textit{so-called} discrete Euler-Lagrange equations
\begin{equation}
\label{eq:generic_DEL}
D_{2} \tilde{\mathcal{L}}_d^e(y_{k-1},y_{k},h) + D_{1} \tilde{\mathcal{L}}_d^e(y_{k},y_{k+1},h) = 0\,, \quad k = 1, ..., N-1,
\end{equation}
that characterize the critical points of this action. Remarkably, if $y_d$ is such a critical point, then there exists $y: [0,T] \to T^*\mathcal{Q}$, extremiser of \eqref{eq:lagrangian_action} such that $y_d = y(T_d)$, i.e. $y_d$ is a sampling of this solution over $T_d$.\\

The construction of variational integrators begins by constructing an approximation $\tilde{\mathcal{L}}_d$ to the exact discrete Lagrangian, \eqref{eq:exact_discrete_new_Lagrangian}, and following the same steps in order to produce an approximation of $y_d$. The initial result in \cite{marsden2001discrete} amended in \cite{patrick2009error}, tells us that the convergence rate of the approximation of $y_d$ to it corresponds to the convergence rate of the approximation of $\tilde{\mathcal{L}}_d$ to $\tilde{\mathcal{L}}_d^e$ in the celebrated variational error theorem.\\

In our case, following our continuous approach, we do not intend to fix $y_0$ and $y_N$ and, furthermore, we need to consider the boundary (initial and terminal) costs that appear in \eqref{newAugmentedObjective_no_u}. However, in order to provide a suitable exact discrete analogue of the control-independent $\tilde{\mathcal{J}}[y]$, we need to deal with the correct discretisation of $\dot{q}$. For this, we make use of the fact highlighted in Remark \ref{rmk:pLambdaEqualVelocity}.\\

In the discrete setting, one can define analogues to the fibre derivative provided by the derivatives of the discrete Lagrangian with respect to its first and second arguments. Namely,
\begin{subequations}
\begin{align}
p_{y,k}^{-}(y_{k},y_{k+1},h) &= - D_1 \tilde{\mathcal{L}}_d(y_{k},y_{k+1},h) \in T^*_{y_k} T^* \mathcal{Q}\,,\\
p_{y,k+1}^{+}(y_{k},y_{k+1},h) &= \hphantom{-} D_2 \tilde{\mathcal{L}}_d(y_{k},y_{k+1},h) \in T^*_{y_{k+1}} T^* \mathcal{Q}\,.
\end{align}
\label{eq:discreteMomentadef}
\end{subequations}
These allow us to interpret \eqref{eq:generic_DEL} as a matching of momenta, i.e. $p_{y,k}^{+}(y_{k-1},y_{k},h) = p_{y,k}^{-}(y_{k},y_{k+1},h)$, which implies that there exist unique well-defined momenta $p_{y,k}$ along extremals. Moreover, these can be put into direct correspondence to the canonical momenta obtained in the continuous setting, i.e.~\eqref{contLegendre}, so that $p_{y,k} = (p_{q,k}, p_{\lambda,k})$ with
\begin{subequations}
	\label{eq:discreteMomenta}
	\begin{align}
        p_{q,k} &= v_{\lambda,k} + D_2 f(q_{k},v_{k})^{\top} \lambda_k\,,\\
        p_{\lambda,k} &= v_k\,,
    \end{align}
\end{subequations}
which gives us appropriate values of $v_k$ and $v_{\lambda,k}$. In light of this, and focusing on the former, these expressions allow us to write
\begin{subequations}
	\label{eq:discreteVelocities}
	\begin{align}
		v_{k}^{-}(y_{k},y_{k+1},h) &= - D_{\lambda_k} \tilde{\mathcal{L}}_d(y_{k},y_{k+1},h) \in T_{q_k} \mathcal{Q}\,,\\
		v_{k+1}^{+}(y_{k},y_{k+1},h) &= \hphantom{-} D_{\lambda_{k+1}} \tilde{\mathcal{L}}_d(y_{k},y_{k+1},h) \in T_{q_{k+1}} \mathcal{Q}\,.
    \end{align}
\end{subequations}
If the discrete Lagrangians are indeed the exact discrete ones, then we write $v_{k}^{e,-}(y_{k},y_{k+1},h)$ and $v_{k+1}^{e,+}(y_{k},y_{k+1},h)$. With this, we can finally give the following

\begin{definition}
We say
\begin{align}
\tilde{\mathcal{J}}^e_d[y_d,\mu,\nu] &= \phi(q_N,v_{N}^{e,+}(y_{N-1},y_{N},h)) + \mu^{\top}(q_0 - q^0) + \nu^{\top} (v_{0}^{e,-}(y_{0},y_{1},h) - \dot{q}^0)\nonumber\\
&+ \lambda_N^{\top} v_{N}^{e,+}(y_{N-1},y_{N},h) - \lambda_0^{\top} v_{0}^{e,-}(y_{0},y_{1},h) - \sum_{k=0}^{N-1} \tilde{\mathcal{L}}_d^e(y_{k},y_{k+1},h)\,.\label{eq:exactDiscreteobjective_no_u}
\end{align}
is the exact discrete new control-independent augmented objective function of $\tilde{\mathcal{J}}[y]$.
\end{definition}

Stationarity of this objective \eqref{eq:exactDiscreteobjective_no_u}, $\delta\tilde{\mathcal{J}}^e_d[y_d]=0 $, with the short-hand notation $\tilde{\mathcal{L}}^e_d(y_k,y_{k+1},h)  \equiv  \tilde{\mathcal{L}}^e_{d,k}$, leads to the following optimality conditions:
\begin{subequations}
\label{exact_no_u_boundaries}
\begin{align}
    \text{(Euler-Lagrange)}&&\delta y_k&: \quad D_2 \tilde{\mathcal{L}}_{d,k-1}^{e} + D_1 \tilde{\mathcal{L}}_{d,k}^{e} = 0 , \qquad \text{for } k=2,\dots,N-2\label{yexactOptimalitynou}\\
    \text{(boundary conditions)}&&\delta q_0 &: \quad (\nu - \lambda_0)^\top D_{q_0} v_0^{e,-} + \mu^\top - D_{q_0} \tilde{\mathcal{L}}^{e}_{d,0} = 0\label{exactboundarynou1}\\
    &&\delta \lambda_0 &: \quad  (\nu - \lambda_0)^\top D_{\lambda_0} v_0^{e,-} - v_0^{e,-} - D_{\lambda_0} \newL_{d,0}^{e} = 0 \label{exactboundarynou2}\\
    &&\delta y_1 &: \quad (\nu - \lambda_0)^\top D_2 v_0^{e,-} - (D_2 \tilde{\mathcal{L}}_{d,0}^{e} + D_1 \tilde{\mathcal{L}}_{d,1}^{e}) = 0 \label{exactboundarynou3}\\
    &&\delta y_{N-1} &: \quad  \left(D_2 \phi + \lambda_N^\top\right) D_1 v_N^{e,+} - (D_2 \tilde{\mathcal{L}}_{d,N-2}^{e} + D_1 \tilde{\mathcal{L}}_{d,N-1}^{e})= 0\label{exactboundarynou4}\\
    &&\delta q_{N} &: \quad  \left(D_2 \phi + \lambda_N^\top\right) D_{q_N} v_N^{e,+} + D_1 \phi - D_{q_N} \tilde{\mathcal{L}}_{d,N-1}^{e} = 0\label{exactboundarynou5}\\
    &&\delta \lambda_{N} &: \quad \left(D_2 \phi + \lambda_N^\top\right) D_{\lambda_N} v_N^{e,+} + v_N^{e,+}- D_{\lambda_N} \newL_{d,N-1}^e = 0 \label{exactboundarynou6}\\
    &&\delta \mu &: \quad  q_0 = q^0\label{exacnoutboundarymu}\\
    &&\delta \nu &: \quad v_0^{e,-} = \dot q ^0.\label{exacnoutboundarynu}
\end{align}
\end{subequations}

\begin{remark}
    \label{rmk:discrete_velocity_lambda}
    Notice that, in contrast with the necessary conditions for optimality found in the continuous setting, \eqref{eulerLagrangeufull}, no explicit variation with respect to either $\dot{q}(0)$, \eqref{boundaryend2}, or $\dot{q}(T)$, \eqref{boundaryend4}, appears here while variations with respect to $\lambda_0 \equiv \lambda(0)$, \eqref{exactboundarynou2} and $\lambda_N \equiv \lambda(T)$, \eqref{exactboundarynou6}, do appear naturally due to $y = (q,\lambda)$ being our primary variables, in contrast with the usual $(q,\dot{q})$. Nevertheless, it is still possible to recuperate Equations \eqref{boundaryend2} and \eqref{boundaryend4} simply by differentiating $\tilde{\mathcal{J}}_d^e$ with respect to $v_0^{-}$ and $v_N^+$. This is canonically justifiable since
    \begin{equation*}
        \lambda_N^{\top} v_{N}^{e,+}(y_{N-1},y_{N},h) - \lambda_0^{\top} v_{0}^{e,-}(y_{0},y_{1},h) - \sum_{k=0}^{N-1} \tilde{\mathcal{L}}_d^e(y_{k},y_{k+1},h)
    \end{equation*}
    may itself be regarded as a recast of a discrete cost function $C_d^e(q_0,v_0,q_N,v_N,h)$ with $\lambda_0$ and $\lambda_N$ coinciding with its canonical momenta associated to $v_0$ and $v_N$.
    In any case, the role of the resulting equations is equivalent, as will become clear in the proof of the coming theorem.
\end{remark}

At first glance, the optimality conditions \eqref{exact_no_u_boundaries} derived from the exact discrete objective function $\delta\tilde{\mathcal{J}}^e_d[y_d]=0 $ may appear to be quite different from the continuous optimality conditions \eqref{boundaryend1}-\eqref{boundaryend4} and \eqref{eulerLagrange_no_u} derived for the continuous objective $\tilde{\mathcal{J}}$ in \eqref{newAugmentedObjective_no_u}. However, they are, in fact, equivalent which is shown in the following

\begin{theorem}
    \label{thm:equivalence_no_u}
    Assume that either
    \begin{enumerate}[label=H\arabic*]
        \item \label{itm:hyp1} $D_{\lambda_0} v_0^{e,-}$ and $D_{\lambda_N} v_N^{e,+}$ are of full rank, or
        \item \label{itm:hyp2} \eqref{exactboundarynou2} and \eqref{exactboundarynou6}  are substituted by \eqref{boundaryend2} and \eqref{boundaryend4}, respectively.
    \end{enumerate}
    Then, the necessary conditions for optimality derived from the extremisation of $\tilde{\mathcal{J}}$, \eqref{newAugmentedObjective_no_u}, and those of $\tilde{\mathcal{J}}_d^e$,  \eqref{eq:exactDiscreteobjective_no_u} are equivalent. This means both that:
    \begin{itemize}
        \item given $(y,\mu,\nu)$ optimal, i.e. solution of \eqref{eulerLagrangeufull}, then $(y_d = y(T_d),\mu,\nu)$ is discretely optimal, i.e. it solves \eqref{exact_no_u_boundaries}, for all $T_d$;
        \item given $T_d = \left\lbrace t_k\right\rbrace_0^N$ and $(y_d,\mu,\nu)$ discretely optimal, then the piecewise-defined $y$ s.t. $\left.y(t)\right\vert_{[t_k,t_{k+1}]} = y_{k,k+1}(t - t_k)$, $k = 0,\dots, N-1$ where $y_{k,k+1}$ is the solution of \eqref{eulerLagrange_no_u1} and \eqref{eulerLagrange_no_u2} with $y_{k,k+1}(0) = y_k$ and $y_{k,k+1}(t_{k+1}-t_k) = y_{k+1}$, is optimal.
    \end{itemize}
\end{theorem}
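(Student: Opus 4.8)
The plan is to reduce the asserted equivalence to two ingredients: the classical exact-discrete-mechanics dictionary for the interior of the time grid, and a careful bookkeeping of the boundary terms that encode the transversality conditions. The pivotal structural fact I would use throughout is Remark \ref{rmk:pLambdaEqualVelocity}: since the momentum conjugate to $\lambda$ equals the state velocity, $p_\lambda = v$, the exact discrete fibre derivatives in \eqref{eq:discreteVelocities} satisfy $v_0^{e,-} = \dot q(0)$ and $v_N^{e,+} = \dot q(T)$ whenever $y$ is the exact solution sampled by $y_d$. This identification immediately matches \eqref{exacnoutboundarynu} with \eqref{initialvcondi} and turns the boundary cost $\phi(q_N,v_N^{e,+})$ into the genuine terminal cost $\phi(q(T),\dot q(T))$, while \eqref{exacnoutboundarymu} is just \eqref{initialqcondi}.

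For the interior I would invoke the fundamental property of the exact discrete Lagrangian \eqref{eq:exact_discrete_new_Lagrangian} (Marsden--West): a discrete curve solves the discrete Euler--Lagrange equations \eqref{eq:generic_DEL} if and only if it is the restriction to $T_d$ of a continuous solution of \eqref{eulerLagrange_no_u1}--\eqref{eulerLagrange_no_u2}, and the discrete momenta \eqref{eq:discreteMomentadef} coincide with the continuous momenta along that solution, i.e. $-D_1\tilde{\mathcal{L}}^e_{d,k} = p_y(t_k)$ and $D_2\tilde{\mathcal{L}}^e_{d,k} = p_y(t_{k+1})$ with $p_y = (p_q,p_\lambda)$ given by \eqref{contLegendre}--\eqref{eq:discreteMomenta}. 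Thus \eqref{yexactOptimalitynou} already encodes momentum matching (hence $C^1$-gluing) at the interior nodes $k=2,\dots,N-2$, whereas the matchings at the two remaining nodes $k=1$ and $k=N-1$ sit inside \eqref{exactboundarynou3} and \eqref{exactboundarynou4}, bundled with a boundary term.

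The heart of the argument is therefore the boundary analysis. First I would simplify \eqref{exactboundarynou2} and \eqref{exactboundarynou6}: substituting the definitions $v_0^{e,-} = -D_{\lambda_0}\tilde{\mathcal{L}}^e_{d,0}$ and $v_N^{e,+} = D_{\lambda_N}\tilde{\mathcal{L}}^e_{d,N-1}$, the terms $-v_0^{e,-}-D_{\lambda_0}\tilde{\mathcal{L}}^e_{d,0}$ and $v_N^{e,+}-D_{\lambda_N}\tilde{\mathcal{L}}^e_{d,N-1}$ vanish identically, leaving
\begin{equation*}
(\nu-\lambda_0)^\top D_{\lambda_0}v_0^{e,-} = 0, \qquad (D_2\phi+\lambda_N^\top)D_{\lambda_N}v_N^{e,+} = 0.
\end{equation*}
Under \ref{itm:hyp1} the full rank of $D_{\lambda_0}v_0^{e,-}$ and $D_{\lambda_N}v_N^{e,+}$ forces $\nu = \lambda_0$ and $\lambda_N = -D_2\phi^\top$, which are exactly \eqref{boundaryend2} and \eqref{boundaryend4}; under \ref{itm:hyp2} these are imposed directly (the ``recuperation'' anticipated in Remark \ref{rmk:discrete_velocity_lambda}, obtained by differentiating the recast cost in $v_0^-$ and $v_N^+$). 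With $\nu=\lambda_0$ and $D_2\phi+\lambda_N^\top=0$ in hand, the leading multiplier terms in \eqref{exactboundarynou3} and \eqref{exactboundarynou4} drop, so these reduce to the momentum matchings at $k=1$ and $k=N-1$; together with the interior equations this promotes the piecewise curve to a genuine solution on all of $[0,T]$. The same substitution kills the first terms of \eqref{exactboundarynou1} and \eqref{exactboundarynou5}, reducing them to $\mu = -p_{q,0}$ and $p_{q,N} = D_1\phi$; using the relation $p_q = \dot\lambda + \left(\frac{\partial f}{\partial\dot q}\right)^{\!\top}\lambda = -\lambda_q$ read off from Remark \ref{rmk:OCP_first_order}, these become precisely \eqref{boundaryend1} and \eqref{boundaryend3}.

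Both implications then follow by running this bookkeeping in the two directions: from a continuous optimum one samples $y_d=y(T_d)$ and verifies each equation of \eqref{exact_no_u_boundaries} via the momentum identities; conversely, from a discrete optimum one glues the exact sub-arcs and reads off the continuous conditions as above. I expect the main obstacle to lie in the boundary analysis rather than the interior: one must track transposes and keep the four distinct partial derivatives $D_{q_k}$, $D_{\lambda_k}$, $D_1$, $D_2$ of $v^{e,\pm}$ from being conflated, and one must justify \ref{itm:hyp1}. A short asymptotic computation gives $D_{\lambda_0}v_0^{e,-} = -\tfrac{h}{2}\,b(q_0)+O(h^2)$, so \ref{itm:hyp1} holds for small $h$ exactly in the fully actuated case where $b=\rho\,\mathrm{g}^{-1}\rho^\top$ is invertible, which explains why the alternative \ref{itm:hyp2} is offered for the under-actuated regime.
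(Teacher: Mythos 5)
Your proof is correct and follows essentially the same route as the paper's: interior equivalence via the standard exact-discrete-Lagrangian correspondence, cancellation in the $\delta\lambda_0$ and $\delta\lambda_N$ equations leaving $(\nu-\lambda_0)^\top D_{\lambda_0}v_0^{e,-}=0$ and $\left(D_2\phi+\lambda_N^\top\right)D_{\lambda_N}v_N^{e,+}=0$, resolved by \ref{itm:hyp1} or \ref{itm:hyp2}, after which the remaining boundary equations reduce to the missing discrete Euler--Lagrange equations at $k=1$ and $k=N-1$ together with the momentum identifications $\mu^\top = -p_{q,0}$ and $p_{q,N}=D_1\phi$, recovering \eqref{boundaryend1} and \eqref{boundaryend3} exactly as in the paper. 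The only place you go beyond (and mildly overstate) the paper is the closing claim that \ref{itm:hyp1} holds for small $h$ \emph{exactly} in the fully actuated case: your expansion $D_{\lambda_0}v_0^{e,-}=-\tfrac{h}{2}\,b(q_0)+O(h^2)$ correctly establishes the ``if'' direction, but rank-deficiency of $b$ at leading order does not preclude full rank arising from higher powers of $h$ --- the paper conjectures (Remarks \ref{rmk:on_equivalence_hypotheses} and \ref{rmk:discrete_velocity_lambda_with_controls}) that small-time local controllability restores \ref{itm:hyp1} in underactuated cases precisely through such higher-order, non-local dependence --- though this side remark is not needed for the theorem itself, where \ref{itm:hyp1} is a hypothesis.
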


\begin{proof}
    \label{proofexactequivalence_no_u}
    The equivalence of the discrete \eqref{yexactOptimalitynou} and the continuous Euler-Lagrange equations \eqref{eulerLagrange_no_u} for $[t_1,t_{N-1}],$ is given by the construction of the exact discrete action \eqref{discretActiondef}, see \cite{marsden2001discrete}.\\
    For the equivalence of the boundary conditions, let us first consider \eqref{exactboundarynou2} and \eqref{exactboundarynou6}. By \eqref{eq:discreteVelocities}, we see that only the first term of each equation survives. If $D_{\lambda_0} v_0^{e,-}$ and $D_{\lambda_N} v_N^{e,+}$ are of full-rank (\ref{itm:hyp1}), then these equations are already equivalent to \eqref{boundaryend2} and \eqref{boundaryend4}. Otherwise, we may apply Hypothesis \ref{itm:hyp2} since these equations only fix the values of $\lambda_0$ and $\lambda_N$.\\
    Either way, Equations \eqref{exactboundary1},\eqref{exactboundary3},\eqref{exactboundary4} and \eqref{exactboundary5} reduce to
    
    \begin{subequations}
    \begin{align}
    \delta q_0 &: \quad \mu^\top - D_{q_0} \tilde{\mathcal{L}}^{e}_{d,0} = 0\label{exactboundarynousimp1}\\
    \delta y_1 &: \quad (D_2 \tilde{\mathcal{L}}_{d,0}^{e} + D_1 \tilde{\mathcal{L}}_{d,1}^{e}) = 0 \label{exactboundarynousimp3}\\
    \delta y_{N-1} &: \quad  (D_2 \tilde{\mathcal{L}}_{d,N-2}^{e} + D_1 \tilde{\mathcal{L}}_{d,N-1}^{e})= 0\label{exactboundarynousimp4}\\
    \delta q_{N} &: \quad  D_1 \phi - D_{q_N} \tilde{\mathcal{L}}_{d,N-1}^{e} = 0\label{exactboundarynousimp5}
\end{align}
\label{simplifiedboundaries}
\end{subequations}
Equations \eqref{exactboundarynousimp3} and \eqref{exactboundarynousimp4} are simply the missing discrete Euler-Lagrange equations for the initial and final intervals respectively.
Under the canonical identifications provided by \eqref{eq:discreteMomenta}, and the relations highlighted in Remark \ref{rmk:OCP_first_order}, \eqref{exactboundarynousimp1} and \eqref{exactboundarynousimp5} give us
\begin{align*}
    - D_{q_0} \tilde{\mathcal{L}}_{d,0}^e &= p_{q,0} = \mu^\top = - \lambda_{q,0}^\top\\
    D_{q_N} \tilde{\mathcal{L}}_{d,{N-1}}^e &= p_{q,N} = D_1 \phi = - \lambda_{q,N}^\top
\end{align*}
which concludes the proof.
\end{proof}

\begin{remark}
    \label{rmk:on_equivalence_hypotheses}
    It is important to highlight that, first,
    even if Hypothesis \ref{itm:hyp1} did not hold and \ref{itm:hyp2} was not applied, 
    Equations \eqref{exact_no_u_boundaries} would still provide the correct solution, though $\lambda_0$ and $\lambda_N$ would not be fully defined. This will become clear from our analysis of the control-dependent case in the next section (see in particular Remark \ref{rmk:uind_equations_well_defined}).\\
    Second, we believe that Hypothesis \ref{itm:hyp1} in Theorem \ref{thm:equivalence_no_u} is satisfied in the exact case under the assumption that the system under consideration, \eqref{SODE} in this case, be small-time locally controllable along the optimal trajectory. However, we do not have a proof of this yet. Besides, it is important to note that Hypothesis \ref{itm:hyp1} is not the norm in application. Indeed, when approximating the exact discrete Lagrangian, it is often the case that the resulting matrices $D_{\lambda_0} v_0^{-}$ and $D_{\lambda_N} v_N^{+}$ be rank deficient. Thus, the application of Hypothesis \ref{itm:hyp2} should not be regarded as an ad-hoc solution and it is in fact typical and, moreover, canonical in the sense of Remark \ref{rmk:discrete_velocity_lambda}. This will also become clearer in the control-dependent case (see in particular Remark \ref{rmk:discrete_velocity_lambda_with_controls}).
\end{remark}

\subsection{Exact semi-discrete setting}
\label{secsemidiscrete}

Motivated by the definitions of the exact discrete Lagrangian, we present here an exact discrete new control-dependent Lagrangian. We propose the following definition and assume it being well-defined.
\begin{definition}
\label{def:newControlDepDiscLag}
Consider the functional
\begin{equation}
\label{eq:exact_semidiscrete_new_Lagrangian}
\tilde{\mathcal{L}}_d^{\mathcal{E},e}[u_{k,k+1}](y_k,y_{k+1},h) \equiv \tilde{\mathcal{L}}_d^{\mathcal{E},e}(y_k,y_{k+1},u_{k,k+1},h) = \int_{0}^{h} \tilde{\mathcal{L}}^{\mathcal{E}}(y_{k,k+1}(t),\dot{y}_{k,k+1}(t),u_{k,k+1}(t))\, dt\,.
\end{equation}
where $u_{k,k+1} \in C^1([0,h],\mathcal{N})$\footnote{Actually, from a geometric point of view the control argument of $\tilde{\mathcal{L}}_d^{\mathcal{E},e}$ should be considered a curve $u_{k,k+1} \in C^{1}([0,h],\mathcal{E})$, such that $\pi^{\mathcal{E}} \circ u_{k,k+1} = q_{k,k+1} = \pi_\mathcal{Q} \circ y_{k,k+1}$, $q_{k,k+1}(0) = \pi_\mathcal{Q}(y_{k})$ and $q_{k,k+1}(h) = \pi_\mathcal{Q}(y_{k+1})$, but we abuse the identification with its fibres for simplicity.} is any control curve that steers the system from $y_k$ to $y_{k+1}$ in time $h$, and $y_{k,k+1}$ is the solution of \eqref{eulerLagrangeu1} and \eqref{eulerLagrangeu2} under said control so that $y_{k,k+1}(0) = y_{k}$ and $y_{k,k+1}(h) = y_{k+1}$. We say $\tilde{\mathcal{L}}_d^{\mathcal{E},e}$ is the exact discrete new control-dependent Lagrangian for the OCP.
\end{definition}

Clearly, this is a functional dependent on the function $u_{k,k+1}$. Therefore, it is also appropriate to dub this an exact \textit{semi-discrete} new Lagrangian for the OCP. 
One could argue that the restriction on $u_{k,k+1}$ should make it dependent on the tuple $(y_k,y_{k+1},h)$. However, one can think of it the other way around: It is not so much that the tuple determines the control as that the control determines the tuple. In particular, if we fix $y_k$ and $h$, every control curve will lead the system from $y_k$ to some $\tilde{y}_{k+1}$ in time $h$, but we simply restrict to those where $\tilde{y}_{k+1} = y_{k+1}$, assuming $y_{k+1}$ is inside the reachable set.
\\

\begin{remark}
From the definition, if $y_{k,k+1} = (q_{k,k+1},\lambda_{k,k+1})$, then,
\begin{align*}
q_{k,k+1}(t) &= q_{k,k+1}(q_{k}, q_{k+1}, u_{k,k+1}, h; t)\,,\\
\lambda_{k,k+1}(t) &= \lambda_{k,k+1}(q_{k}, \lambda_{k}, q_{k+1}, \lambda_{k+1}, u_{k,k+1}, h; t)\,, \qquad \forall t \in [0,h].
\end{align*}
In particular, notice that $q_{k,k+1}$ cannot depend on $\lambda_k$ or $\lambda_{k+1}$ since \eqref{eulerLagrangeu2} is completely independent of $\lambda$. This will be extraordinarily important in the analysis of the resulting (semi-)discrete necessary conditions for optimality.
\end{remark}

Whenever $\bar{u}_{k,k+1}$ is optimal, meaning it satisfies the minimisation condition \eqref{eulerLagrangeu3}, we can express it in terms of $q_{k,k+1}$, $\lambda_{k,k+1}$, resulting in $\bar{u}_{k,k+1}(y_{k},y_{k+1},h;t)$. Thus, from their respective definitions we obtain
\begin{equation}
    \label{eq:dep_indep_identity}
    \newL_d^e(y_k,y_{k+1},h) = \newL_d^{\mathcal{E},e}(y_k,y_{k+1},\bar{u}_{k,k+1},h).
\end{equation}

\begin{proposition}
\label{prp:UDiscreteFibreDer}
Consider the exact discrete new control-dependent Lagrangian \eqref{eq:exact_semidiscrete_new_Lagrangian} for \eqref{problem_definition}. Then,
\begin{align*}
p_{y,k}^{-}(y_{k},y_{k+1},u_{k,k+1},h) &= - D_1 \tilde{\mathcal{L}}_d^{\mathcal{E},e}(y_k,y_{k+1},u_{k,k+1},h) = D_2 \tilde{\mathcal{L}}^{\mathcal{E}}(y_{k,k+1}(0),\dot{y}_{k,k+1}(0),u_{k,k+1}(0))\,,\\
p_{y,k+1}^{+}(y_{k},y_{k+1},u_{k,k+1},h) &= \hphantom{-} D_2 \tilde{\mathcal{L}}_d^{\mathcal{E},e}(y_k,y_{k+1},u_{k,k+1},h) = D_2 \tilde{\mathcal{L}}^{\mathcal{E}}(y_{k,k+1}(h),\dot{y}_{k,k+1}(h),u_{k,k+1}(h))\,.
\end{align*}
\end{proposition}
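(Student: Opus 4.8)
The plan is to adapt the classical Marsden--West computation for the exact discrete Lagrangian (see \cite{marsden2001discrete}) to the present control-dependent setting, the essential new ingredient being that the control curve $u_{k,k+1}$ is held fixed throughout the variation. First I would fix the meaning of the partial derivatives $D_1$ and $D_2$ of $\tilde{\mathcal{L}}_d^{\mathcal{E},e}$, which are ordinary (finite-dimensional) derivatives with respect to the endpoints $y_k$ and $y_{k+1}$, the control being a frozen functional argument. For fixed $u_{k,k+1}$ and fixed step $h$, the two-point boundary value problem for \eqref{eulerLagrangeu1}--\eqref{eulerLagrangeu2} with $y_{k,k+1}(0)=y_k$, $y_{k,k+1}(h)=y_{k+1}$ is well-posed for $h$ small (this is precisely the well-definedness assumed in Definition \ref{def:newControlDepDiscLag}), so a variation of $y_k$ keeping $y_{k+1}$ and $u_{k,k+1}$ fixed induces a one-parameter family $y^{\epsilon}(t)$ of solution curves and hence a well-defined variation field $\delta y(t) = \tfrac{d}{d\epsilon}\big|_{0} y^{\epsilon}(t)$ with $\delta y(0)=\delta y_k$ and $\delta y(h)=0$. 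This is the step I expect to demand the most care, since $u_{k,k+1}$ is a priori constrained to steer $y_k$ to $y_{k+1}$; the point (cf.\ the discussion after Definition \ref{def:newControlDepDiscLag}) is that for fixed $(u_{k,k+1},h)$ the endpoint map is a local diffeomorphism, so $y_k$ and $y_{k+1}$ may legitimately be varied independently while the control stays frozen.

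With this in place, I would compute $D_1 \tilde{\mathcal{L}}_d^{\mathcal{E},e}\cdot\delta y_k$ by differentiating \eqref{eq:exact_semidiscrete_new_Lagrangian} under the integral sign. Because $u_{k,k+1}$ is held fixed, no derivative of $\tilde{\mathcal{L}}^{\mathcal{E}}$ with respect to its control slot appears, and the chain rule leaves only the terms $D_1\tilde{\mathcal{L}}^{\mathcal{E}}\cdot\delta y$ and $D_2\tilde{\mathcal{L}}^{\mathcal{E}}\cdot\delta\dot y$, where here $D_1,D_2$ denote the derivatives of $\tilde{\mathcal{L}}^{\mathcal{E}}$ with respect to its position slot $y=(q,\lambda)$ and its velocity slot $\dot y=(v,v_\lambda)$.

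Next I would integrate the velocity term by parts in $t$, obtaining
\[
D_1 \tilde{\mathcal{L}}_d^{\mathcal{E},e}\cdot\delta y_k = \int_0^h \Big( D_1\tilde{\mathcal{L}}^{\mathcal{E}} - \tfrac{d}{dt} D_2\tilde{\mathcal{L}}^{\mathcal{E}}\Big)\cdot \delta y\, dt + \Big[\, D_2\tilde{\mathcal{L}}^{\mathcal{E}}\cdot\delta y\,\Big]_0^h .
\]
The crucial observation is that, by construction, $y_{k,k+1}$ solves the Euler--Lagrange equations of $\tilde{\mathcal{L}}^{\mathcal{E}}$ for the fixed control, and these are exactly \eqref{eulerLagrangeu1}--\eqref{eulerLagrangeu2} (note that the algebraic minimisation condition \eqref{eulerLagrangeu3} plays no role whatsoever here), so the bulk integrand vanishes identically. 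Only the boundary term survives, and using $\delta y(h)=0$, $\delta y(0)=\delta y_k$ I would read off $D_1\tilde{\mathcal{L}}_d^{\mathcal{E},e} = -D_2\tilde{\mathcal{L}}^{\mathcal{E}}(y_{k,k+1}(0),\dot y_{k,k+1}(0),u_{k,k+1}(0))$; negating and invoking \eqref{eq:discreteMomentadef} yields the first asserted identity, which is also consistent with the Legendre transform \eqref{contLegendre} evaluated at the left endpoint.

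Finally, the second identity follows by the identical argument applied to $D_2\tilde{\mathcal{L}}_d^{\mathcal{E},e}$, now varying $y_{k+1}$ with $y_k$ and $u_{k,k+1}$ fixed, so that the induced variation field satisfies $\delta y(0)=0$, $\delta y(h)=\delta y_{k+1}$ and the surviving boundary term is the one evaluated at $t=h$ with a plus sign, giving $D_2\tilde{\mathcal{L}}_d^{\mathcal{E},e} = D_2\tilde{\mathcal{L}}^{\mathcal{E}}(y_{k,k+1}(h),\dot y_{k,k+1}(h),u_{k,k+1}(h))$. The only genuine obstacle is the one flagged in the first paragraph, namely making rigorous the independent variation of the endpoints while freezing the control; everything thereafter is the standard integration-by-parts argument together with the vanishing of the Euler--Lagrange residual along $y_{k,k+1}$.
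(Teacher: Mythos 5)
Your proposal is correct and takes essentially the same route as the paper's proof: differentiation under the integral with the control frozen, integration by parts (which the paper bundles into the Euler--Lagrange operator $D_{\text{EL}}$), vanishing of the bulk term because $y_{k,k+1}$ solves \eqref{eulerLagrangeu1}--\eqref{eulerLagrangeu2}, and evaluation of the boundary term via $\frac{\partial y_{k,k+1}}{\partial y_{k+1}}(0)=0$ and $\frac{\partial y_{k,k+1}}{\partial y_{k+1}}(h)=I$. The only cosmetic difference is that the paper writes out the term $D_3\tilde{\mathcal{L}}^{\mathcal{E}}\,\frac{\partial u_{k,k+1}}{\partial y_{k+1}}$ explicitly and kills it by treating $u_{k,k+1}$ as an independent variable, which is precisely your ``frozen control'' observation.
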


\begin{proof}
The proof of this is essentially the same as for a standard discrete Lagrangian. In particular, if we focus on $p_{y,{k+1}}$, we get
\begin{align*}
D_2 \tilde{\mathcal{L}}_d^{\mathcal{E},e}(y_k,y_{k+1},u_{k,k+1},h) &= \frac{\partial}{\partial y_{k+1}} \int_{0}^{h} \tilde{\mathcal{L}}^{\mathcal{E}}(y_{k,k+1}(t),\dot{y}_{k,k+1}(t),u_{k,k+1}(t))\, dt\\
&= \int_{0}^{h} (D_{\text{EL}}\tilde{\mathcal{L}}^{\mathcal{E}})(y_{k,k+1}(t),\dot{y}_{k,k+1}(t),\ddot{y}_{k,k+1}(t),u_{k,k+1}(t),\dot{u}_{k,k+1}(t)) \frac{\partial y_{k,k+1}}{\partial y_{k+1}} \, dt\\
&+ \int_{0}^{h} D_3 \tilde{\mathcal{L}}^{\mathcal{E}}(y_{k,k+1}(t),\dot{y}_{k,k+1}(t),u_{k,k+1}(t)) \frac{\partial u_{k,k+1}}{\partial y_{k+1}} \, dt\\
&+ \left[ D_2 \tilde{\mathcal{L}}^{\mathcal{E}}(y_{k,k+1}(t),\dot{y}_{k,k+1}(t),u_{k,k+1}(t)) \frac{\partial y_{k,k+1}}{\partial y_{k+1}}\right]_0^h \,
\end{align*}
Here, $D_{\text{EL}}$ denotes the Euler-Lagrange operator which produces Equations~\eqref{eulerLagrangeu1}-\eqref{eulerLagrangeu2}\footnote{Though not important for this proof, it is worth mentioning that for \eqref{problem_definition}, $\dot{u}_{k,k+1}$ does not appear in these equations.}. Thus, by the definition of $\tilde{\mathcal{L}}_d^{\mathcal{E},e}$ the first integral term of the last equality vanishes. The second integral term of the last equality also vanishes since $u_{k,k+1}$ is assumed an independent variable. Finally, the last term is composed of the difference of two evaluations. By the mutual independence of $y_k$, $y_{k+1}$,
\begin{equation*}
\frac{\partial y_{k,k+1}}{\partial y_{k+1}}(0) = \frac{\partial y_{k}}{\partial y_{k+1}} = 0\,, \qquad \frac{\partial y_{k,k+1}}{\partial y_{k+1}}(h) = \frac{\partial y_{k+1}}{\partial y_{k+1}} = I\,,
\end{equation*}
we get the corresponding result. The case $p_{y,k}$ follows similarly.
\end{proof}

By \eqref{eq:dep_indep_identity}, as a trivial corollary, when $u_{k,k+1}$ satisfies \eqref{eulerLagrangeu3}, these coincide with those of the control-independent case. Also, Proposition~\eqref{prp:UDiscreteFibreDer} shows that we have access to analogous constructions to \eqref{eq:discreteVelocities}, i.e.
\begin{subequations}
	\label{eq:discreteVelocitiesU}
	\begin{align}
		v_{k}^{-} &= - D_{\lambda_k} \tilde{\mathcal{L}}_d^{\mathcal{E}}(y_{k},y_{k+1},u_{k,k+1},h)\,,\\
		v_{k+1}^{+} &= \hphantom{-} D_{\lambda_{k+1}} \tilde{\mathcal{L}}_d^{\mathcal{E}}(y_{k},y_{k+1},u_{k,k+1},h)\,,
    \end{align}
\end{subequations}
are well-defined. Moreover, from our definition of $\tilde{\mathcal{L}}_d^{\mathcal{E},e}$, it follows that in the exact case these control-dependent velocities only depend on $q_{k}$, $q_{k+1}$, $u_{k,k+1}$ and $h$, since
\begin{subequations}
\begin{align}
	v_{k}^{e,-}(q_{k},q_{k+1},u_{k,k+1},h) &= \left.\frac{d}{dt}\right\vert_{t = 0} q_{k,k+1}(q_{k}, q_{k+1}, u_{k,k+1}, h; t)\,,\\
	v_{k+1}^{e,+}(q_{k},q_{k+1},u_{k,k+1},h) &= \left.\frac{d}{dt}\right\vert_{t = h} q_{k,k+1}(q_{k}, q_{k+1}, u_{k,k+1}, h; t)\,.
\end{align}
\label{eq:exactdiscreteVelocitiesU}
\end{subequations}

\begin{remark}
    \label{rmk:discrete_velocity_lambda_with_controls}
    This explains what was mentioned in Remark \ref{rmk:on_equivalence_hypotheses}. $v_0^{-}$ and $v_N^{+}$ depend on $\lambda_0$ and $\lambda_N$, respectively, solely through the controls. 
    Also,
    the dependence of $u_{0,1}$ and $u_{N-1,N}$ on these adjoints is non-local in the sense that they are the result of solving a two-point boundary value problem. Thus, the resulting dependence may appear at high 
    powers of $h$, and approximate discretisations may or may not be able to capture the correct behaviour of $D_{\lambda_0} v_0^{-}$ and $D_{\lambda_N} v_N^{+}$ in the control-independent case, hence the need to impose Hypothesis \ref{itm:hyp2} in those cases to uniquely determine these costates.
\end{remark}


With these, we can provide the following
\begin{definition}
Let $u_d$ be an ordered collection of single-interval controls, i.e. $u_d = \left\lbrace u_{0,1}, ..., u_{N-1,N}\right\rbrace$. Then, we say
\begin{align}
\tilde{\mathcal{J}}^{\mathcal{E},e}_d[y_d,u_d,\mu,\nu] &= \phi(q_N,v_{N}^{e,+}(q_{N-1},q_{N},u_{N-1,N},h))\nonumber\\
&+ \lambda_N^{\top} v_{N}^{e,+}(q_{N-1},q_{N},u_{N-1,N},h) - \lambda_0^{\top} v_{0}^{e,-}(q_{0},q_{1},u_{0,1},h)\nonumber\\
&+ \mu^{\top}(q_0 - q^0) + \nu^{\top} (v_{0}^{e,-}(q_{0},q_{1}, u_{0,1} ,h) - \dot{q}^0) - \sum_{k=0}^{N-1} \tilde{\mathcal{L}}_d^{\mathcal{E},e}(y_{k},y_{k+1},u_{k,k+1},h)\,.
\label{eq:exactDiscreteobjective}
\end{align}
is the exact discrete new control-dependent augmented objective function of $\tilde{\mathcal{J}}^{\mathcal{E}}[y,u,\mu,\nu]$.
\end{definition}

In order to obtain the necessary conditions for optimality, we make use of the following
\begin{proposition}
    \label{prp:control_dep_lambda_indep}
    By construction, $\tilde{\mathcal{J}}_d^{\mathcal{E},e}$, \eqref{eq:exactDiscreteobjective} is independent of $\lambda_0$ and $\lambda_N$.
\end{proposition}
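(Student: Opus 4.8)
The plan is to verify directly that the gradients $\partial \tilde{\mathcal{J}}_d^{\mathcal{E},e}/\partial\lambda_0$ and $\partial \tilde{\mathcal{J}}_d^{\mathcal{E},e}/\partial\lambda_N$ vanish identically, which is equivalent to the claimed independence. First I would isolate, in \eqref{eq:exactDiscreteobjective}, the terms that can carry $\lambda_0$. Since $y_k=(q_k,\lambda_k)$, the initial costate $\lambda_0$ appears only in the explicit boundary term $-\lambda_0^\top v_0^{e,-}(q_0,q_1,u_{0,1},h)$ and in the $k=0$ summand $-\tilde{\mathcal{L}}_d^{\mathcal{E},e}(y_0,y_1,u_{0,1},h)$; every other term involves only the configurations $q_k$, the boundary velocities, the controls, and the remaining costates. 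By symmetry, $\lambda_N$ appears only in $\lambda_N^\top v_N^{e,+}$, in $\phi(q_N,v_N^{e,+})$, and in the $k=N-1$ summand, with $\phi$ feeling $\lambda_N$ only through $v_N^{e,+}$.

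The key structural input is \eqref{eq:exactdiscreteVelocitiesU}: the exact boundary velocities $v_0^{e,-}$ and $v_N^{e,+}$ are read off from the state trajectory $q_{k,k+1}$ alone, and the latter is governed by \eqref{eulerLagrangeu2}, which is decoupled from the adjoint. With the controls treated as free, independent arguments in the control-dependent setting, $v_0^{e,-}$ and $v_N^{e,+}$ therefore carry no dependence on $\lambda_0$ or $\lambda_N$. Consequently the explicit boundary terms differentiate trivially: $\partial(-\lambda_0^\top v_0^{e,-})/\partial\lambda_0=-v_0^{e,-}$ and $\partial(\lambda_N^\top v_N^{e,+})/\partial\lambda_N=v_N^{e,+}$.

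It remains to differentiate the two boundary summands, for which I would invoke the fibre-derivative identities \eqref{eq:discreteVelocitiesU} established in Proposition~\ref{prp:UDiscreteFibreDer}, namely $D_{\lambda_0}\tilde{\mathcal{L}}_d^{\mathcal{E},e}=-v_0^{e,-}$ and $D_{\lambda_N}\tilde{\mathcal{L}}_d^{\mathcal{E},e}=v_N^{e,+}$. Adding the contributions, the $\lambda_0$-gradient equals $-v_0^{e,-}-D_{\lambda_0}\tilde{\mathcal{L}}_d^{\mathcal{E},e}=-v_0^{e,-}+v_0^{e,-}=0$, and the $\lambda_N$-gradient equals $v_N^{e,+}-D_{\lambda_N}\tilde{\mathcal{L}}_d^{\mathcal{E},e}=v_N^{e,+}-v_N^{e,+}=0$; since these hold everywhere, $\tilde{\mathcal{J}}_d^{\mathcal{E},e}$ is independent of $\lambda_0$ and $\lambda_N$. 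If a self-contained argument is preferred over citing Proposition~\ref{prp:UDiscreteFibreDer}, one can re-derive $D_{\lambda_0}\tilde{\mathcal{L}}_d^{\mathcal{E},e}=-v_0^{e,-}$ by varying $\lambda_0$ with $q_0,q_1,\lambda_1,u_{0,1}$ held fixed: the induced trajectory variation has $\delta\lambda_{0,1}(0)=\delta\lambda_0$, $\delta\lambda_{0,1}(h)=0$ and leaves $q_{0,1}$ untouched, so after integration by parts the interior integral is killed by the Euler--Lagrange equation associated with variations in $\lambda$ (the state equation \eqref{eulerLagrangeu2}), leaving only the boundary contribution $-(v_0^{e,-})^\top\delta\lambda_0$, since $\partial_{v_\lambda}\tilde{\mathcal{L}}^{\mathcal{E}}=v^\top$ and $v(0)=v_0^{e,-}$. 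I expect the only point requiring care to be the bookkeeping that $v_0^{e,-}$ and $v_N^{e,+}$ genuinely carry no costate dependence in the control-dependent case --- precisely the feature that fails when the controls are slaved to $\lambda$ via \eqref{eulerLagrangeu3}, and which makes this proposition the natural counterpart of Remark~\ref{rmk:discrete_velocity_lambda_with_controls}.
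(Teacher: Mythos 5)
Your proof is correct and follows essentially the same route as the paper's: both arguments rest on the $\lambda$-independence of $v_0^{e,-}$ and $v_N^{e,+}$ from \eqref{eq:exactdiscreteVelocitiesU} (a consequence of the state equation \eqref{eulerLagrangeu2} decoupling from the adjoint), which reduces the $\lambda_0$- and $\lambda_N$-variations to $-v_0^{e,-} - D_{\lambda_0}\tilde{\mathcal{L}}^{\mathcal{E},e}_{d,0}$ and $v_N^{e,+} - D_{\lambda_N}\tilde{\mathcal{L}}^{\mathcal{E},e}_{d,N-1}$, identically zero by the fibre-derivative identities \eqref{eq:discreteVelocitiesU}. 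Your extra bookkeeping of where $\lambda_0,\lambda_N$ enter and the optional self-contained re-derivation of $D_{\lambda_0}\tilde{\mathcal{L}}_d^{\mathcal{E},e} = -v_0^{e,-}$ via integration by parts merely make explicit what the paper delegates to Proposition \ref{prp:UDiscreteFibreDer}.
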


\begin{proof}
    Since the velocities \eqref{eq:exactdiscreteVelocitiesU} are independent of $\lambda$, variation of $\tilde{\mathcal{J}}_d^{\mathcal{E},e}$ with respect to $\lambda_0$ and $\lambda_N$ reduces to
    \begin{align*}
        \delta \lambda_0 &:\quad - v_0^{e,-} - D_{\lambda_0} \tilde{\mathcal{L}}^{\mathcal{E},e}_{d,0} = 0,\\
        \delta \lambda_{N} &: \quad  v_N^{e,+}- D_{\lambda_N} \tilde{\mathcal{L}}^{\mathcal{E},e}_{d,N-1} = 0.
    \end{align*}
    However, by \eqref{eq:discreteVelocitiesU} these are identically zero proving our claim.
\end{proof}

The previous result may seem surprising due to $\lambda_0$ and $\lambda_N$ featuring prominently in the definition of $\tilde{\mathcal{J}}_d^{\mathcal{E},e}$. However, this is so by the very definition of the augmented cost function \eqref{newAugmentedObjective}. The extra terms obtained in the process of integration by parts, $\lambda(T)^{\top} \dot{q}(T)$ and $\lambda(0)^{\top} \dot{q}(0)$, that led us to this functional are precisely there to annul the variations with respect to these adjoints, making the overall expression equivalent to \eqref{augmentedObjective}. Terms $\delta \lambda(0)$ and $\delta \lambda(N)$ do not appear in the variation of this latter functional.

The necessary conditions for optimality for the exact discrete control-dependent objective function are once more derived by setting $\delta \newJ_d^{\mathcal{E},e}=0,$ leading to 
\begin{subequations}
\begin{align}
        \text{(Euler-Lagrange)}&&\delta y_k&: \quad D_1 \tilde{\mathcal{L}}_{d,k}^{\mathcal{E},e} + D_2 \tilde{\mathcal{L}}_{d,k-1}^{\mathcal{E},e}= 0, \quad \text{for } k=2,\dots,N-2,\label{yexactOptimality}\\
    \text{(minimisation)}&&\delta u_{k,k+1}&:\quad  D_3 \tilde{\mathcal{L}}_{d,k}^{\mathcal{E},e}[\delta u_{k,k+1}] = 0, \hspace{0.97cm}\text{for } k=2,\dots,N-2,\label{uExactOptimality}\\
    \text{(boundary conditions)}&&\delta q_0&:\quad (\nu-\lambda_0 )^\top D_{q_0} v_0^{e,-} + \mu^\top - D_{q_0} \tilde{\mathcal{L}}^{\mathcal{E},e}_{d,0} = 0,\label{exactboundary1}\\
      &&\delta y_1&:\quad  (\nu-\lambda_0)^\top D_2 v_{0}^{e,-} - (D_1 \tilde{\mathcal{L}}_{d,1}^{\mathcal{E},e} + D_2 \tilde{\mathcal{L}}_{d,0}^{\mathcal{E},e})= 0,\label{exactboundary3}\\
       &&  \delta y_{N-1}&:\quad \left(D_2 \phi +\lambda_N ^\top\right) D_1 v_N^{e,+} - (D_2 \tilde{\mathcal{L}}_{d,N-2}^{\mathcal{E},e}+D_1 \tilde{\mathcal{L}}_{d,N-1}^{\mathcal{E},e}) = 0,\label{exactboundary4}\\
       &&\delta q_{N}&:\quad  \left(D_2 \phi+\lambda_N^\top\right) D_{q_N} v_N^{e,+}  + D_1\phi - D_{q_N} \tilde{\mathcal{L}}_{d,N-1}^{\mathcal{E},e} = 0,\label{exactboundary5}\\
      && \delta u_{0,1}&:\quad (\nu -\lambda_0)^\top D_{3} v_0^{e,-}[\delta u_{0,1}] - D_{3} \tilde{\mathcal{L}}_{d,0}^{\mathcal{E},e}[\delta u_{0,1}]  = 0,\label{exactboundary7}\\
      && \delta u_{N-1,N}&:\quad \left(D_2 \phi + \lambda_N^\top\right) D_{3} v_N^{e,+}[\delta u_{N-1,N}] - D_{3} \tilde{\mathcal{L}}_{d,N-1}^{\mathcal{E},e}[\delta u_{N-1,N}]  = 0,\label{exactboundary8}\\
      && \delta \mu&:\quad q_0 = q^0, ~~~~ \label{exactboundary9}\\
     &&  \delta \nu&:\quad v_0^{e,-} = \dot q ^0.\label{exactboundary}
\end{align}
\label{exactboundaries}
\end{subequations}

where
\begin{equation}
    \label{eq:exact_control_variations}
    D_3 \tilde{\mathcal{L}}_{d,k}^{\mathcal{E},e}[\delta u_{k,k+1}] = \int_{0}^h D_3 \tilde{\mathcal{L}}^\mathcal{E}(y_{k,k+1},\dot{y}_{k,k+1},u_{k,k+1})\, \delta u_{k,k+1} \,dt
\end{equation}

By Proposition \ref{prp:control_dep_lambda_indep}, neither \eqref{eq:exactDiscreteobjective} nor any of the equations in \eqref{exactboundaries} actually depend on $\lambda_0$ and $\lambda_N$. In order to fix their values, we may introduce the following Hypothesis
\begin{enumerate}[label=H\arabic*',start=2]
    \item \label{itm:H2p} Equations \eqref{boundaryend2} and \eqref{boundaryend4} are appended to the system. 
\end{enumerate}
As mentioned in Remark \ref{rmk:discrete_velocity_lambda} this is canonical. The equality between continuous and discrete optimality conditions in the control-dependent case is given by the following

\begin{theorem}
\label{equivalencesemidiscrete}
    The discrete optimality conditions \eqref{exactboundaries} together with Hypothesis \ref{itm:H2p}
    are equivalent to the continuous control-dependent optimality conditions \eqref{eulerLagrangeufull}.
\end{theorem}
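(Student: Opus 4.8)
The plan is to follow the blueprint of the proof of Theorem~\ref{thm:equivalence_no_u}, matching the discrete conditions~\eqref{exactboundaries} against the continuous ones~\eqref{eulerLagrangeufull} equation-by-equation, but now carrying the control curves $u_{k,k+1}$ along as genuine independent arguments and appending Hypothesis~\ref{itm:H2p} to supply the information about $\lambda_0,\lambda_N$ that, by Proposition~\ref{prp:control_dep_lambda_indep}, the objective itself does not contain. I would first clear the interior. By the construction of the exact discrete control-dependent Lagrangian (Definition~\ref{def:newControlDepDiscLag}) together with the fibre-derivative identities of Proposition~\ref{prp:UDiscreteFibreDer}, the discrete Euler--Lagrange equations~\eqref{yexactOptimality} are exactly the continuous state and adjoint dynamics~\eqref{eulerLagrangeu1}--\eqref{eulerLagrangeu2} sampled along each subinterval, precisely as in~\cite{marsden2001discrete}. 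For the controls, expanding~\eqref{uExactOptimality} through~\eqref{eq:exact_control_variations} shows that requiring $D_3\tilde{\mathcal{L}}_{d,k}^{\mathcal{E},e}[\delta u_{k,k+1}]=0$ for all admissible $\delta u_{k,k+1}$ forces $D_3\tilde{\mathcal{L}}^{\mathcal{E}}=0$ pointwise; since $D_3\tilde{\mathcal{L}}^{\mathcal{E}}=\lambda^\top\rho(q)-u^\top\mathrm{g}(q)$, this is precisely the continuous minimisation~\eqref{eulerLagrangeu3}.

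For the boundary conditions, recall that by Proposition~\ref{prp:control_dep_lambda_indep} the objective yields no equations for $\lambda_0$ and $\lambda_N$, so Hypothesis~\ref{itm:H2p} supplies~\eqref{boundaryend2} and~\eqref{boundaryend4} directly, fixing $\nu=\lambda_0$ and $\lambda_N=-D_2\phi^\top$. Feeding these back, the prefactors $(\nu-\lambda_0)$ and $(D_2\phi+\lambda_N^\top)$ annihilate the leading coupling terms of~\eqref{exactboundary1},~\eqref{exactboundary3},~\eqref{exactboundary4},~\eqref{exactboundary5},~\eqref{exactboundary7} and~\eqref{exactboundary8}; this is the step where Hypothesis~\ref{itm:H2p} is indispensable, since it removes precisely the control-coupling through the boundary velocities described in Remark~\ref{rmk:discrete_velocity_lambda_with_controls}. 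What survives are the Euler--Lagrange matchings at nodes $k=1$ and $k=N-1$ (from~\eqref{exactboundary3} and~\eqref{exactboundary4}) and the control-stationarity on the first and last subintervals (from~\eqref{exactboundary7} and~\eqref{exactboundary8}), which together with the interior results cover all of $[0,T]$. Equations~\eqref{exactboundary9} and~\eqref{exactboundary} reproduce the initial conditions~\eqref{initialqcondi}--\eqref{initialvcondi}, while~\eqref{exactboundary1} and~\eqref{exactboundary5} collapse to $\mu^\top=D_{q_0}\tilde{\mathcal{L}}^{\mathcal{E},e}_{d,0}$ and $D_1\phi=D_{q_N}\tilde{\mathcal{L}}^{\mathcal{E},e}_{d,N-1}$.

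It remains to identify these last two equations with the continuous transversality conditions~\eqref{boundaryend1} and~\eqref{boundaryend3}, and this is where I expect the main difficulty to lie. By Proposition~\ref{prp:UDiscreteFibreDer} the full first-slot derivative $-D_1\tilde{\mathcal{L}}_{d,0}^{\mathcal{E},e}$ equals the boundary momentum $p_{y,0}$, whose $q$-component is $p_{q,0}=v_{\lambda,0}+D_2 f(q_0,v_0)^\top\lambda_0$ by~\eqref{eq:discreteMomenta}; extracting this component gives $-D_{q_0}\tilde{\mathcal{L}}_{d,0}^{\mathcal{E},e}=p_{q,0}$ and, symmetrically, $D_{q_N}\tilde{\mathcal{L}}_{d,N-1}^{\mathcal{E},e}=p_{q,N}$. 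Passing to the first-order reformulation of Remark~\ref{rmk:OCP_first_order}, in which $\lambda_q=-\dot\lambda-(D_2 f)^\top\lambda=-p_q$, these become $\mu=\lambda_q(0)$ and $-D_1\phi=\lambda_q(T)$, i.e.~exactly~\eqref{boundaryend1} and~\eqref{boundaryend3}. The delicate part is to perform this component extraction and translation rigorously: it rests on the boundary terms $\lambda(T)^\top\dot q(T)-\lambda(0)^\top\dot q(0)$ produced by the integration by parts in~\eqref{newAugmentedObjective}, and on confirming that, once Hypothesis~\ref{itm:H2p} has killed the control-coupling through $v_0^{e,-}$ and $v_N^{e,+}$, nothing spurious remains. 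The reverse implication is then immediate, as the exact discrete construction reassembles a genuine continuous extremal along which $q$ and $\lambda$ vary continuously and the controls match through the minimisation~\eqref{eulerLagrangeu3}.
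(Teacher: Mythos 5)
Your proposal is correct and follows essentially the same route as the paper's proof: you reduce to the control-independent argument of Theorem~\ref{thm:equivalence_no_u} under Hypothesis~\ref{itm:H2p}, use the vanishing prefactors $(\nu-\lambda_0)$ and $(D_2\phi+\lambda_N^\top)$ to recover the boundary Euler--Lagrange and transversality conditions via \eqref{eq:discreteMomenta} and Remark~\ref{rmk:OCP_first_order}, and obtain the continuous minimisation condition \eqref{eulerLagrangeu3} on all of $[0,T]$ from \eqref{uExactOptimality} together with \eqref{exactboundary7} and \eqref{exactboundary8}. Your write-up is in fact more explicit than the paper's (which largely defers to Theorem~\ref{thm:equivalence_no_u}), notably in the sign bookkeeping $p_q=-\lambda_q$ and the pointwise fundamental-lemma step for the controls.
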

\begin{proof}
    The proof is mostly the same as for the independent case, Theorem \ref{proofexactequivalence_no_u} under Hypothesis \ref{itm:hyp2}.
    In particular, \eqref{boundaryend2} and \eqref{boundaryend4} lead to the same simplification of the boundary optimality conditions \eqref{exactboundaries} as was shown in theorem \ref{proofexactequivalence_no_u}. Consequently, the equivalence to the transversality conditions \eqref{boundaryend1}-\eqref{boundaryend4} and state/costate dynamics \eqref{eulerLagrangeu1},\eqref{eulerLagrangeu2} is analogously given by construction of the discrete Lagrangian $\newL_d^{\mathcal{E},e}$.
    The difference with Theorem \ref{proofexactequivalence_no_u} lies in the variations with respect to the controls. However, notice that \eqref{eq:exact_control_variations} together with the exact discrete minimisation condition \eqref{uExactOptimality} in conjunction with the boundary minimisation conditions \eqref{exactboundary7},\eqref{exactboundary8} yield the continuous minimisation condition \eqref{eulerLagrangeu3} for all $t \in [0,T]$.
\end{proof}

\begin{remark}
\label{rmk:uind_equations_well_defined}
It is important to highlight that the imposition of Hypothesis \ref{itm:H2p} is only necessary in order to prove the equivalence with the continuous equations. Equations \eqref{exactboundaries} form a well-determined set of equations since by Proposition \ref{prp:control_dep_lambda_indep}, $\lambda_0$ and $\lambda_N$ are not actually part of the variable set to begin with. For instance, Equations \eqref{exactboundary1} and \eqref{exactboundary3} readily allow us to obtain $\mu$ and $\nu$ in terms of $q_0$, $q_1$, $q_2$, $\lambda_1$, $\lambda_2$, $u_{0,1}$ and $u_{1,2}$.\\
Since \eqref{exactboundaries} are equivalent to \eqref{exact_no_u_boundaries} with \eqref{exactboundarynou2} and \eqref{exactboundarynou6} removed, this shows that the first point in Remark \eqref{rmk:on_equivalence_hypotheses} is true.
\end{remark}

\section{Optimality conditions for a low-order integration scheme}


\label{loworderapproxsection}

While the exact discrete treatment of the optimal control problems considered is of theoretical relevance, it is not generally usable in applications. 
For these, we rely on approximations which lead to readily usable integration schemes. 
In the following a general low-order integration scheme is used to derive discrete optimality conditions that can be used to apply the new approach to complex examples. The advantage of deriving discrete optimality conditions by approximating the exact Lagrangians lies in the fact that these naturally yield variational integrators.

\subsection{Approximate optimality conditions in the control-independent case}

Let us approximate the exact discrete control-independent Lagrangian 
when $\mathcal{Q} = \mathbb{R}^n$, $n \in \mathbb{N}$, as follows:
\begin{subequations}
\begin{align}
    \newL_{d,k}^e 
    \approx
    \tilde{\mathcal{L}}_{d,k} &=  h\alpha \tilde{\mathcal{L}}_{k}^\gamma + h (1-\alpha)\tilde{\mathcal{L}}_{k}^ {(1-\gamma)} \\
    \tilde{\mathcal{L}}_{k}^ \gamma&=\newL(\overline{y}_k^\gamma,\Delta y_k) = \Delta \lambda_k^\top \Delta q_k + {\overline{\lambda}_k^ {\gamma}}^{T} f_k^\gamma + \frac{1}{2} {\overline{\lambda}_k^{\gamma}}^\top b_k^\gamma  \overline{\lambda}_k^\gamma,\label{approxLagrangian_b}
\end{align}
    \label{approxLagrangian}
\end{subequations}
with $\alpha,\gamma \in [0,1]$ which define the particular low-order integration scheme, and the short-hand notations $\overline{y}_k^\gamma = \gamma y_k + (1-\gamma) y_{k+1}$,  $\Delta y_k = (y_{k+1}-y_k)/h$, and $f_k^\gamma = f\left( \overline{q}_k^{\gamma} ,  \Delta q_{k}\right)$, $b_k^\gamma = b(\overline{q}_k^\gamma)$.

The boundary velocities in terms of the discrete variables  defined in the exact discrete context \eqref{eq:discreteVelocities} can be calculated explicitly now for  $\tilde{\mathcal{L}}_d$
\begin{subequations}
\begin{align}
    v_0^{e,-} &\approx v_0^{-} =  \Delta q_0 -h\alpha\gamma\left(  {f_0^\gamma}+ b_0^\gamma {\overline{\lambda}_0^\gamma}\right) - h(1-\alpha)(1-\gamma)\left({f_0^{(1-\gamma)}}+ b_0^{(1-\gamma)} {\overline{\lambda}_0^{(1-\gamma)}}\right)\label{apV1}\\
     v_N^{e,+} &\approx v_N^{+}=  \Delta q_{N-1} +h\alpha(1-\gamma)\left(  {f_{N-1}^\gamma}+ b_{N-1}^\gamma  {\overline{\lambda}_{N-1}^\gamma} \right) + h(1-\alpha)\gamma\left({f_{N-1}^{(1-\gamma)}}+ b_{N-1}^{(1-\gamma)} {\overline{\lambda}_{N-1}^{(1-\gamma)}}\right).\label{apV2}
\end{align}
\label{apV_all}
\end{subequations}

Inserting \eqref{approxLagrangian},\eqref{apV_all} in \eqref{eq:exactDiscreteobjective_no_u} in place of their exact counterparts provides us with the approximate cost function $\tilde{\mathcal{J}}_d$.\\

Stationarity together with Hypothesis \ref{itm:hyp2} leads to the following set of optimality conditions for $k = 1, \dots,  N-1$
\begin{subequations}
\begin{align}
     \delta \lambda_k &:\; \frac{1}{h^2}(q_{k+1}-2q_k + q_{k-1}) = \alpha\gamma\left({f_k^\gamma} +   b_k^\gamma {\overline{\lambda}_{k}^{\gamma}}\right) +(1-\alpha)(1-\gamma)\left({f_k^{(1-\gamma)}} +   b_k^{(1-\gamma)} {\overline{\lambda}_{k}^{(1-\gamma)}}\right)\label{appEL1}\\
     & \hspace{3.1cm}+\alpha(1-\gamma)\left({f_{k-1}^\gamma} +   b_{k-1}^\gamma {\overline{\lambda}_{k-1}^{\gamma}}\right) +(1-\alpha)\gamma\left({f_{k-1}^{(1-\gamma)}} +   b_{k-1}^{(1-\gamma)} {\overline{\lambda}_{k-1}^{(1-\gamma)}} \right)\nonumber\\
     \delta q_k &:\; \frac{1}{h^2}(\lambda_{k+1}-2\lambda_k + \lambda_{k-1}) = \alpha\left[\left(\gamma D_1 f_k^\gamma - \frac{1}{h}D_2 f_k^\gamma\right)^\top  \overline{\lambda}_k^\gamma +\frac{1}{2}(D_{q_k} b_k^\gamma(\overline{\lambda}_k^\gamma,\overline{\lambda}_k^\gamma))^\top\right] \label{appEL2}\\
     &  \hspace{3.1cm}+ \alpha\left[  \left((1-\gamma) D_1 f_{k-1}^\gamma + \frac{1}{h} D_2 f_{k-1}^\gamma \right)^\top\overline{\lambda}_{k-1}^\gamma+\frac{1}{2}( D_{q_k} b_{k-1}^\gamma (\overline{\lambda}_{k-1}^\gamma, \overline{\lambda}_{k-1}^\gamma))^\top\right]\nonumber\\
     &   \hspace{3.1cm}+(1-\alpha)\left[  \left((1-\gamma) D_1 f_k^{(1-\gamma)} - \frac{1}{h} D_2 f_k^{(1-\gamma)}\right)^\top\overline{\lambda}_k^{(1-\gamma)} + \frac{1}{2}\left(D_{q_k} b_k^{(1-\gamma)}(\overline{\lambda}_k^{(1-\gamma)},\overline{\lambda}_k^{(1-\gamma)})\right)^\top \right]\nonumber\\
     & \hspace{3.1cm} + (1-\alpha)\left[ \left(\gamma D_1 f_{k-1}^{(1-\gamma)} + \frac{1}{h}D_2 f_{k-1}^{(1-\gamma)}\right)^\top \overline{\lambda}_{k-1}^{(1-\gamma)}+\frac{1}{2}\left( D_{q_k} b_{k-1}^{(1-\gamma)} (\overline{\lambda}_{k-1}^{(1-\gamma)}, \overline{\lambda}_{k-1}^{(1-\gamma)})\right)^\top\right]\nonumber\\
    \delta q_0 &:\; \mu = h\alpha\left[-\frac{1}{h}\Delta \lambda_0+  \left(\gamma D_1 f_0^\gamma - \frac{1}{h} D_2 f_0^\gamma\right)^\top\overline{\lambda}_0^\gamma+\frac{1}{2}\left( D_{q_0} b_0^\gamma(\overline{\lambda}_0^\gamma,\overline{\lambda}_0^\gamma)\right)^\top\right]\label{approxboundarynoubegin}\\
    &+ h(1-\alpha)\left[-\frac{1}{h}\Delta \lambda_0 + \left((1-\gamma) D_1 f_0^{(1-\gamma)} - \frac{1}{h}D_2 f_0^{(1-\gamma)}\right)^\top\overline{\lambda}_0^{(1-\gamma)} +\frac{1}{2}\left(D_{q_0} b_0^{(1-\gamma)}(\overline{\lambda}_0^{(1-\gamma)},\overline{\lambda}_0^{(1-\gamma)})\right)^\top\right]\nonumber\\
    \delta q_{N} &:\; D_1 \phi(q_N, v_{N}^+) =h\alpha\left[\frac{1}{h} \Delta \lambda_{N-1} + \left((1-\gamma) D_1 f_{N-1}^\gamma +  \frac{1}{h} D_2 f_{N-1}^\gamma\right)^\top \overline{\lambda}_{N-1}^\gamma +\frac{1}{2}\left( D_{q_N} b_{N-1}^\gamma (\overline{\lambda}_{N-1}^\gamma, \overline{\lambda}_{N-1}^\gamma)\right)^\top\right]\nonumber\\
    &+h(1-\alpha)\left[\frac{1}{h}\Delta \lambda_{N-1} +  \left(\gamma D_1 f_{N-1}^{(1-\gamma)} + \frac{1}{h}D_2 f_{N-1}^{(1-\gamma)}\right)^\top \overline{\lambda}_{N-1}^{(1-\gamma)}+\frac{1}{2}\left( D_{q_N} b_{N-1}^{(1-\gamma)} (\overline{\lambda}_{N-1}^{(1-\gamma)}, \overline{\lambda}_{N-1}^{(1-\gamma)})\right)^\top\right]\\
       \delta \mu &:\; q_0 = q^0  \\
       \delta \nu &:\; v_0^{-} = \dot q ^0.\label{approxoutboundary}\\
       \delta v_0^- &:\; \lambda_0 = \nu \label{approxoutboundarypost1}\\
       \delta v_N^+ &:\;  \lambda_N^\top = -D_2 \phi (q_N, v_{N}^+),\label{approxoutboundarypost2}
\end{align}
\label{approxnouboundaries}
\end{subequations}
where $b(\lambda,\lambda) = \lambda^\top b \lambda$.
These necessary optimality conditions derived from the approximate objective $\newJ_d$ are consequently approximations of the exact necesary optimality conditions \eqref{exact_no_u_boundaries} of $\newJ_d^{e}$.
\begin{remark}
Variations with respect to $\lambda_0$ and $\lambda_N$ give

\begin{subequations}
\begin{align}
      \delta \lambda_0&:  (\nu-\lambda_0 )^\top \left[\alpha\gamma^2 b_0^\gamma +(1-\alpha) (1-\gamma)^2 b_0^{(1-\gamma)} \right] = 0, \label{lamJvarapprrox1}\\
       \delta \lambda_{N}&:  \left(\lambda_N +\frac{\partial \phi}{\partial \dot q}^\top\right)^\top \left[\alpha(1-\gamma)^2 b_{N-1}^\gamma +  \gamma^2 b_{N-1}^{(1-\gamma)}\right]   = 0 .\label{lamJvarapprrox2}
\end{align}
\label{lamJvarapprrox}
\end{subequations}
If \ref{itm:hyp1} holds, then the terms in square brackets are matrices of full rank and so, these equations are equivalent to \eqref{approxoutboundarypost1} and \eqref{approxoutboundarypost2} respectively. Thus, Equations \eqref{appEL1} to \eqref{approxoutboundary} also hold. However, as already noted in Remark \ref{rmk:on_equivalence_hypotheses}, this situation is not the norm and the aforementioned matrices 
may not be
of full rank. 
For instance, for $\gamma=1=\alpha$ the matrices 
reduce to $b_0^1,b_{N-1}^0$ respectively which are semi-definite in the underactuated case. Consequently, in that case Hypothesis \ref{itm:hyp1} is not fulfilled, which has the effect of leaving $\lambda_0$ and $\lambda_N$ defined only up to an element of the kernel of the respective matrices. Thus, Hypothesis \ref{itm:hyp2} needs to be applied to uniquely set their value. 
\end{remark}

Equations \eqref{appEL1},\eqref{appEL2} are discrete versions of the continuous second-order differential equations \eqref{eulerLagrange_no_u1},\eqref{eulerLagrange_no_u2}.
Depending on the chosen $\alpha,\gamma$ values, different evaluations of the right-hand side of the continuous differential equations are performed, yielding different schemes.\\

The formulation in terms of discrete Lagrangians lets us 
determine the order of convergence of the 
resulting algorithm through the application of
variational error analysis \cite{marsden2001discrete}.
For the low-order integration scheme considered, the order of convergence is given by the following
\begin{proposition}
\label{convergencelemma}
Assuming sufficient differentiability, the family of low-order integration schemes defined in \eqref{approxLagrangian} yield optimality conditions that approximate the exact discrete optimality conditions \eqref{exact_no_u_boundaries} to order 2 for $\alpha=1/2, \gamma\in [0,1]$ and $\gamma = 1/2, \alpha\in [0,1]$. For any other choice, the optimality conditions are approximated to order 1.
\end{proposition}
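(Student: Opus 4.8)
The plan is to reduce the statement to an estimate of the order to which the approximate discrete Lagrangian \eqref{approxLagrangian} agrees with the exact discrete Lagrangian \eqref{eq:exact_discrete_new_Lagrangian}, and then to invoke the variational error theorem of \cite{marsden2001discrete}, as refined in \cite{patrick2009error}. That theorem asserts that if $\newL_d(y_k,y_{k+1},h) = \newL_d^e(y_k,y_{k+1},h) + O(h^{p+1})$, then the discrete Euler--Lagrange flow generated by $\newL_d$, together with its induced discrete fibre derivatives \eqref{eq:discreteVelocities}, approximates the exact discrete one to order $p$. Since the only non-exact ingredients entering the approximate conditions \eqref{approxnouboundaries} are $\newL_{d,k}$ itself and the boundary velocities \eqref{apV_all}, which are precisely the discrete fibre derivatives of $\newL_d$, the order to which \eqref{approxnouboundaries} approximates the exact discrete conditions \eqref{exact_no_u_boundaries} is dictated entirely by $p$.

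To compute $p$, I would fix a smooth extremal $y(t)$ of \eqref{eulerLagrange_no_u} and identify $y_k = y(0)$, $y_{k+1} = y(h)$, so that $\newL_d^e(y_k,y_{k+1},h) = \int_0^h \newL(y(t),\dot{y}(t))\,dt$ holds exactly for small $h$. Writing $L(t) = \newL(y(t),\dot{y}(t))$ and Taylor expanding yields
\begin{equation*}
\newL_d^e = h\,\newL + \tfrac{h^2}{2}\big(\partial_1\newL\cdot\dot{y}(0) + \partial_2\newL\cdot\ddot{y}(0)\big) + O(h^3),
\end{equation*}
where $\partial_1\newL,\partial_2\newL$ denote the partials of $\newL$ in its base- and velocity-slots, evaluated at $(y(0),\dot{y}(0))$. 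On the approximate side, the key observation is that $\overline{y}_k^\gamma$ and $\overline{y}_k^{(1-\gamma)}$ are the linear interpolant through $(y_k,y_{k+1})$ sampled at $t=(1-\gamma)h$ and $t=\gamma h$, while $\Delta y_k$ is its constant slope, so that $\Delta y_k = \dot{y}(0)+\tfrac{h}{2}\ddot{y}(0)+O(h^2)$ and $\overline{y}_k^\gamma = y(0)+(1-\gamma)h\,\dot{y}(0)+O(h^2)$. Expanding each factor of \eqref{approxLagrangian_b} and weighting by $\alpha$ and $1-\alpha$ gives
\begin{equation*}
\newL_d = h\,\newL + h^2\big(C\,\partial_1\newL\cdot\dot{y}(0) + \tfrac12\,\partial_2\newL\cdot\ddot{y}(0)\big) + O(h^3), \qquad C := \alpha(1-\gamma)+(1-\alpha)\gamma.
\end{equation*}

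The $O(h)$ terms always coincide, and so do the $\partial_2\newL\cdot\ddot{y}(0)$ contributions at order $h^2$; the $\partial_1\newL\cdot\dot{y}(0)$ terms match if and only if $C=\tfrac12$. Since $C = \alpha+\gamma-2\alpha\gamma$, the equation $C=\tfrac12$ factors as $(2\alpha-1)(1-2\gamma)=0$, whose solution set is exactly $\{\alpha=\tfrac12\}\cup\{\gamma=\tfrac12\}$. For these parameters $\newL_d-\newL_d^e = O(h^3)$, giving $p=2$; for every other choice the $h^2$ coefficients differ, $\newL_d-\newL_d^e=O(h^2)$, and $p=1$. Feeding these orders into the variational error theorem transfers them to the interior equations \eqref{appEL1},\eqref{appEL2} and, via the refinement of \cite{patrick2009error}, to the boundary and transversality conditions, which is the assertion of the proposition.

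I expect the genuine obstacle to lie not in the interior equations but in controlling the boundary conditions. The interior order is an immediate corollary of the variational error theorem once the Lagrangian expansion is in hand, whereas one must separately verify that the boundary velocities $v_0^-,v_N^+$ of \eqref{apV_all}, which enter \eqref{approxboundarynoubegin}--\eqref{approxoutboundarypost2} as discrete fibre derivatives, inherit the same order $p$ rather than one lower. This is exactly the delicate point treated in \cite{patrick2009error}: differentiating an $O(h^{p+1})$ Lagrangian error with respect to a boundary node, while simultaneously accounting for the factor $h^{-1}$ implicit in $\Delta y_k$, could a priori degrade the order, so one must confirm that the $h^2$-coefficient computed above is the one that actually governs the momentum mismatch at the endpoints.
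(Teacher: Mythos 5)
Your proposal is correct and takes essentially the same route as the paper's proof: both Taylor-expand $\newL_d^e$ and $\newL_d$ along solutions of \eqref{eulerLagrange_no_u}, compare the $h^2$ coefficients to find that the mismatch is governed by $C=\alpha(1-\gamma)+(1-\alpha)\gamma$ with second order exactly when $C=\tfrac12$, and then appeal to variational error analysis \cite{marsden2001discrete,patrick2009error} to transfer the order of the Lagrangian approximation to the optimality conditions. Your explicit factorisation $(2\alpha-1)(1-2\gamma)=0$, showing the stated families are the \emph{only} second-order choices, and your flagging of the boundary velocities \eqref{apV_all} as the point requiring the refinement of \cite{patrick2009error}, are minor sharpenings of the paper's argument, which verifies the two parameter families directly and leaves the transfer to the boundary conditions implicit.
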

\begin{proof}
    The solution curves $y=(q,\lambda)$ of the Euler-Lagrange equations \eqref{eulerLagrange_no_u} may be expanded in the form $y(t_k + h) = y(t_k) + h \dot y(t_k) + (h^2/2) \ddot y(t_k) + O(h^3)$, leading to  the expansion 
\begin{equation}
\tilde{\mathcal{L}}_{d,k}^e = \int_{t_k}^{t_{k+1}}  \tilde{\mathcal{L}}(y,\dot y) \,dt \approx h\tilde{\mathcal{L}}(y_k,\dot{y}_k) +  \frac{h^2}{2} \left( \frac{\partial \tilde{\mathcal{L}}_k}{\partial y}\dot {y}_k +  \frac{\tilde{\mathcal{L}}_k}{\partial \dot{y}}\ddot{y}_k \right)+ O(h^3),
\label{exexpnoU}
\end{equation}
where $y_k = y(t_k)$ and $\tilde{\mathcal{L}}(y(t_k),\dot y(t_k)) = \tilde{\mathcal{L}}_k$ are being used. 
Next, we substitute the solution in the approximate Lagrangian $\tilde{\mathcal{L}}_{d,k}$. This leads to $\overline y_k^\gamma = y_k + h(1-\gamma)\dot y_k + O(h^2)$, $\Delta y_k = \dot y_k + h \ddot{y}_k + O(h^2)$ and ultimately to
\begin{align}
\tilde{\mathcal{L}}_{d,k} &= h\alpha \tilde{\mathcal{L}}(\overline{y}_k^\gamma,\Delta y_k) +h(1-\alpha) \tilde{\mathcal{L}}(\overline{y}_k^{(1-\gamma)},\Delta y_k)\nonumber \\  
&= h\tilde{\mathcal{L}}_k + \frac{h^2}{2}\left( 2[\alpha (1-\gamma) +(1-\alpha) \gamma]\frac{\partial \tilde{\mathcal{L}}_k}{\partial y}\dot{y}_k + \frac{\partial \tilde{\mathcal{L}}_k}{\partial y}\ddot{y}_k \right) + O(h^3). \label{apexpnoU}
\end{align}
Comparing the expansions \eqref{exexpnoU} and \eqref{apexpnoU} shows that the approximate Lagrangian $\tilde{\mathcal{L}}_d$ is always correct to first-order in $h$, making the scheme consistent.
Further, the integration scheme is of second-order for the cases 
\begin{enumerate}
    \item $\gamma = 1/2$ and $\alpha \in [0,1]$
    \item $\alpha = 1/2$ and $\gamma  \in [0,1]$ 
\end{enumerate} 
In the first case, the resulting schemes are independent of $\alpha$,   yielding the same integration scheme regardless of $\alpha$ choice.  In the second case, different integration schemes are being created in dependence of  $\gamma$, yet they all are of second-order, concluding the proof.
\end{proof}

\subsection{Approximate optimality conditions in the control-dependent case}
\label{sec:ApproximateOC_control_dep}

In order to approximate
$\tilde{\mathcal{J}}_d^{\mathcal{E},e}$, besides choosing a quadrature rule, it is necessary to explicitly discretise the controls themselves too,
which depend on the discrete curve $y_d$ and for the family of integration schemes on the discrete control sets $U_d^{(1)} = \left\lbrace U_{k}^{(1)} \approx u(\bar{t}_k^{\beta}) \right\rbrace_{k=0}^{N-1},U_d^{(2)} = \left\lbrace U_{k}^{(2)} \approx u(\bar{t}_k^{(1-\beta)}) \right\rbrace_{k=0}^{N-1}$, with $\overline{t}_k^\beta = \beta t_k + (1-\beta) t_{k+1},$ $\beta\in[0,1]$, in the form
\begin{align}
        \newL_{d}^{\mathcal{E}}&:\quad T^* \mathcal{Q} \times T^* \mathcal{Q}\times \mathcal{N}\times \mathcal{N} \times \mathbb{R} \rightarrow \mathbb{R},\label{approxLdefU}\\
        &\;\;\;\quad \newL_{d}^{\mathcal{E}}(y_k, y_{k+1}, U_k^{(1)}, U_k^{(2)},h) =h \left[ \alpha \tilde{\mathcal{L}}_{k}^ {\mathcal{E},\gamma} + (1-\alpha) \tilde{\mathcal{L}}_{k}^ {\mathcal{E},(1-\gamma)}\right]\nonumber\\
           &\;\;\;\quad  \tilde{\mathcal{L}}_{k}^ {\mathcal{E},\gamma} = \tilde{\mathcal{L}}^{\mathcal{E}}(\overline{y}_k^\gamma, \Delta y_k, U_k^{(1)}) = \Delta \lambda_k^\top \Delta q_k + {\overline{\lambda}_k^ {\gamma}}^{T} \left(f_k^\gamma +\rho_k^ \gamma U_k^{(1)}\right)- \frac{1}{2} {U_k^{(1)}}^\top g_k^\gamma  U_k^{(1)} \nonumber\\
            &\;\;\;\quad \tilde{\mathcal{L}}_{k}^ {\mathcal{E},(1-\gamma)} = \tilde{\mathcal{L}}^{\mathcal{E}}(\overline{y}_k^{(1-\gamma)}, \Delta y_k, U_k^{(2)}) = \Delta \lambda_k^\top \Delta q_k + {\overline{\lambda}_k^{(1-\gamma)}}^{T} \left(f_k^{(1-\gamma)} +\rho_k^{(1-\gamma)} U_k^{(2)}\right)- \frac{1}{2} {U_k^{(2)}}^\top \mathrm{g}_k^{(1-\gamma)}  U_k^{(2)}, \nonumber
    \end{align}
 where we introduce the shorthands $\rho_k^\gamma = \rho(\overline{q}_k^ \gamma), \mathrm{g}_k^\gamma=\mathrm{g}(\overline{q}_k^\gamma)$.\\

\begin{remark}
    In principle, it would have been possible to use a discretisation of the controls that resembled more closely that of the state-adjoint variables $y$. Indeed, one could work with $u_k^{(1)} \approx u(t_k)$, $u_k^{(2)} \approx u(t_{k+1})$, and end up with evaluations $\bar{u}_k^{\beta} = \beta u_k^{(1)} + (1 - \beta) u_k^{(2)}$, assuming a linear approximation inside the interval. This is not recommendable for three reasons: First, one should be generally discouraged to consider $u_k^{(1)}$ and $u_k^{(2)}$ as nodal variables at all, particularly since generally $u_k^{(2)} \neq u_{k+1}^{(1)}$. Instead, in order to obtain nodal values of $u$, which are consistent among intervals, discrete mechanics tells us that one can derive them canonically from \eqref{eulerLagrangeu3} so that, for given $y_k = (q_k, \lambda_k)$,
    \begin{equation*}
        \mathrm{g}(q_k)u_k = \rho(q_k)^{\top} \lambda_k.
    \end{equation*}
    Second, it is tempting to go one step further and directly assume $u_k = u_k^{(1)},$ $u_{k+1} = u_k^{(2)}$. Using such a formulation would lead to each $u_k$ affecting contiguous intervals, resulting in smeared discrete minimisation conditions. This can lead to worst performance and spurious oscillations. Third, in some cases such as $\alpha = 0$, $\alpha = 1$ or $\gamma = \beta = 1/2$ this choice may result in an underdetermined system.
\end{remark}

The boundary velocities in terms of the discrete variables  for $\tilde{\mathcal{L}}_d^\mathcal{E}$  are given by
\begin{subequations}
\begin{align}
     v_0^{e,-}\approx v_0^{-}&=  \Delta q_0 -h\alpha\gamma\left( {f_0^\gamma}+ \rho_0^\gamma U_0^{(1)}\right) - h(1-\alpha)(1-\gamma) \left( {f_0^{(1-\gamma)}}+ \rho_0^{(1-\gamma)}U_0^{(2)}\right)\label{apVU1}\\
      v_N^{e,+}\approx v_N^{+} &=  \Delta q_{N-1} + h\alpha(1-\gamma)\left(  {f_{N-1}^\gamma}+\rho_{N-1}^{\gamma}U_{N-1}^{(1)} \right) + h(1-\alpha)\gamma\left({f_{N-1}^{(1-\gamma)}}+\rho_{N-1}^{(1-\gamma)}U_{N-1}^{(2)}\right).\label{apVU2}
\end{align}
\label{apVU_all}
\end{subequations}
This approximate form of the boundary velocities  shows explicitly that they are independent of $\lambda_d$ in the control-dependent case, as highlighted in \eqref{eq:exactdiscreteVelocitiesU}.\\

The approximate control-dependent objective function, $\tilde{\mathcal{J}}_d^\mathcal{E}$,
is obtained by inserting the approximate boundary velocity relations \eqref{apVU_all} and the approximation  of the discrete Lagrangian \eqref{approxLdefU} into \eqref{eq:exactDiscreteobjective} in place of the exact ones. 
Stationarity together with Hypothesis \ref{itm:H2p}, for simplicity's sake, lead to the following set of optimality conditions for $k = 1,\dots, N-1$,
\begin{subequations}
\begin{align}
    \delta \lambda_k &:\; \frac{1}{h^2}(q_{k+1} - 2 q_k + q_{k-1}) = \alpha \left[\gamma \left({f_k^\gamma} + \rho _k ^\gamma  U_k^{(1)}\right) + (1-\gamma)\left({f_{k-1}^\gamma} +  \rho_{k-1}^\gamma U_{k-1}^{(1)}\right)\right]\nonumber\\
        & + (1-\alpha) \left[(1-\gamma) \left({f_k^{(1-\gamma)}} +  \rho _k ^{(1-\gamma)} {U_k^{(2)}}\right) + \gamma \left({f_{k-1}^{(1-\gamma)}} + \rho_{k-1} ^{(1-\gamma)} {U_{k-1}^{(2)}}\right) \right]\label{appDEL1}\\
    \delta q_k &:\; \frac{1}{h^2}(\lambda_{k+1}-2\lambda_k + \lambda_{k-1}) =\alpha \left[ \left(\gamma D_1 f_k^\gamma - \frac{1}{h}D_2 f_k^\gamma\right)^{\!\!\top}\!\!\overline{\lambda}_k^\gamma+\left(D_{q_k} \!\!\left(\overline{\lambda}_k^\gamma \rho_k^\gamma U_k^{(1)} - \frac{1}{2}\mathrm{g}_k^\gamma(U_k^{(1)},U_k^{(1)})\right)\right)^{\!\!\top}\right] \!\!\!\nonumber\\
     & + \alpha\left[  \left((1-\gamma) D_1 f_{k-1}^\gamma + \frac{1}{h}D_2 f_{k-1}^\gamma\right)^\top\overline{\lambda}_{k-1}^\gamma+ \left(D_{q_k} \left(\overline{\lambda}_{k-1}^\gamma \rho_{k-1}^\gamma U_{k-1}^{(1)} - \frac{1}{2}\mathrm{g}_{k-1}^\gamma(U_{k-1}^{(1)},U_{k-1}^{(1)})\right)\right)^\top\right]\nonumber\\
     &  +(1-\alpha)\left[ \left((1-\gamma) D_1 f_k^{(1-\gamma)} - \frac{1}{h}D_2 f_k^{(1-\gamma)}\right)^\top\overline{\lambda}_k^{(1-\gamma)} +\left(D_{q_k}\left(\overline{\lambda}_k^{(1-\gamma)} \rho_k^{(1-\gamma)} U_k^{(2)} - \frac{1}{2}\mathrm{g}_k^{(1-\gamma)}(U_k^{(2)},U_k^{(2)})\right)\right)^\top\right]\nonumber\\
     & + (1-\alpha)\left[ \left(\gamma D_1 f_{k-1}^{(1-\gamma)} + \frac{1}{h}D_2 f_{k-1}^{(1-\gamma)}\right)^\top\overline{\lambda}_{k-1}^{(1-\gamma)}+\left(D_{q_k} \left(\overline{\lambda}_{k-1}^{(1-\gamma)} \rho_{k-1}^{(1-\gamma)} U_{k-1}^{(2)} - \frac{1}{2}\mathrm{g}_{k-1}^{(1-\gamma)}(U_{k-1}^{(2)},U_{k-1}^{(2)})\right)\right)^\top\right]\label{appDEL2}\\
    \delta U_{k}^{(1)} &:\;  {\rho_{k}^\gamma}^{T} {\overline \lambda_{k}^\gamma} -  \mathrm{g}_{k}^\gamma {U_{k}^{(1)}} = 0, \label{appDEL3} \\
    \delta U_{k}^{(2)} &:\;  {\rho_{k}^{(1-\gamma)}}^{T}{\overline \lambda_{k}^{(1-\gamma)}} -  \mathrm{g}_k^{(1-\gamma)} {U_{k}^{(2)}}=0\label{appDEL4}\\
    \delta q_0 &:\; \mu = -\Delta \lambda_0 + h\alpha\left[  \left(\gamma D_1 f_0^\gamma - \frac{1}{h}D_2 f_0^\gamma\right)^\top \overline{\lambda}_0^\gamma +\left(D_{q_0} \left(\overline{\lambda}_0^\gamma \rho_0^\gamma U_0^{(1)} - \frac{1}{2}\mathrm{g}_0^\gamma(U_0^{(1)},U_0^{(1)})\right)\right)\right]\label{apUopt1}\\
    &+h(1-\alpha)\left[  \left((1-\gamma) D_1 f_0^{(1-\gamma)} -  \frac{1}{h}D_2 f_0^{(1-\gamma)}\right)^\top\overline{\lambda}_0^{(1-\gamma)} \right.\nonumber\\
    &\left.+ \left(D_{q_0} \left(\overline{\lambda}_0^{(1-\gamma)} \rho_0^{(1-\gamma)} U_0^{(2)} - \frac{1}{2}\mathrm{g}_0^{(1-\gamma)}(U_0^{(2)},U_0^{(2)})\right)\right)^\top\right]\nonumber\\
    \delta q_{N} &:\;  D_1\phi (q_N, v_{N}^+)  = \Delta \lambda_{N-1}^\top+ h\alpha\left[ {\overline{\lambda}_{N-1}^\gamma}^\top \left((1-\gamma) D_1 f_{N-1}^\gamma +  \frac{1}{h}D_2 f_{N-1}^\gamma\right)\right.\label{apUopt2}\\
       &\left.+ D_{q_N}\left(\overline{\lambda}_{N-1}^\gamma \rho_{N-1}^\gamma U_{N-1}^{(1)} - \frac{1}{2}\mathrm{g}_{N-1}^\gamma(U_{N-1}^{(1)},U_{N-1}^{(1)})\right)\right]\nonumber\\
       & + h(1-\alpha)\left[{\overline{\lambda}_{N-1}^{(1-\gamma)}}^\top \left(\gamma D_1 f_{N-1}^{(1-\gamma)} +  \frac{1}{h}D_2 f_{N-1}^{(1-\gamma)}\right)+D_{q_N} \left(\overline{\lambda}_{N-1}^{(1-\gamma)} \rho_{N-1}^{(1-\gamma)} U_{N-1}^{(2)} - \frac{1}{2}\mathrm{g}_{N-1}^{(1-\gamma)}(U_{N-1}^{(2)},U_{N-1}^{(2)})\right)\right]\nonumber\\
       \delta \mu &:\; q_0 = q^0 ~~~~ \\
       \delta \nu &:\; v_0^{-}= \dot q ^0 \label{apUopt3}\\
        \delta v_0^- &:\;\lambda_0 = \nu\label{apUopt6}\\
        \delta v_N^+ &:\;\lambda_N^\top = - D_2 \phi (q_N, v_{N}^+),\label{apUopt7}
\end{align}
\label{apUopts}
\end{subequations}
with $g(u,u)= u^\top g u$. The fundamentally differing feature of these optimality conditions with respect to \eqref{approxnouboundaries} is the existence of the $\delta U_k^{(1)}, \delta U_k^{(2)}$ variations. Compared to the continuous setting, we can identify these as evaluations of the minimisation condition \eqref{eulerLagrangeu3}. 
Notice how when $\gamma = \beta = 1/2$,  it follows that $U_k^{(1)} = U_k^{(2)}$.\\

The rate of convergence can be derived once more from variational error analysis, as given by the following
\begin{proposition}
\label{convergencelemmau}
    Assuming sufficient differentiability, the family of low-order integration schemes defined in \eqref{approxLdefU} yield optimality conditions that approximate the exact discrete optimality conditions \eqref{exactboundaries} 
    to order 2 for $\alpha=1/2,\beta,\gamma \in [0,1]$ or $\beta,\gamma=1/2, \alpha\in [0,1]$. For all other choices $\alpha,\beta,\gamma$ the schemes approximate the exact one to order 1.
\end{proposition}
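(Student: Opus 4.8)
The plan is to repeat the variational error analysis used for Proposition \ref{convergencelemma}, now applied to the control-dependent Lagrangian $\tilde{\mathcal{L}}^{\mathcal{E}}$ of \eqref{approxLdefU}, carrying along the extra control argument together with its two interpolation nodes, whose placement is governed by $\beta$. As in that proof, I would compare the Taylor expansion in $h$ of the exact discrete control-dependent Lagrangian $\tilde{\mathcal{L}}_{d,k}^{\mathcal{E},e}$ against that of its approximation $\tilde{\mathcal{L}}_{d,k}^{\mathcal{E}}$. The crucial conceptual point is that the order of the conditions \eqref{apUopts} relative to the exact ones \eqref{exactboundaries} must be measured treating $(y,u)$ as an extended, \emph{a priori} unconstrained set of interpolated arguments; in particular one must not substitute the minimisation relation \eqref{eulerLagrangeu3} before expanding, since on the optimal arc $\partial_u \tilde{\mathcal{L}}^{\mathcal{E}}$ vanishes and the dependence on $\beta$ would be spuriously lost.

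Concretely, I would first expand the exact side just as in \eqref{exexpnoU}, but now with an additional control-rate term,
\begin{equation*}
\tilde{\mathcal{L}}_{d,k}^{\mathcal{E},e} = h\,\tilde{\mathcal{L}}^{\mathcal{E}}_k + \frac{h^2}{2}\left(\frac{\partial \tilde{\mathcal{L}}^{\mathcal{E}}_k}{\partial y}\dot{y}_k + \frac{\partial \tilde{\mathcal{L}}^{\mathcal{E}}_k}{\partial \dot{y}}\ddot{y}_k + \frac{\partial \tilde{\mathcal{L}}^{\mathcal{E}}_k}{\partial u}\dot{u}_k\right) + O(h^3),
\end{equation*}
the control term being the only structural novelty. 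I would then substitute into \eqref{approxLdefU} the interpolations $\overline{y}_k^\gamma = y_k + h(1-\gamma)\dot{y}_k + O(h^2)$, $\Delta y_k = \dot{y}_k + O(h)$, together with the sampled controls $U_k^{(1)} = u(\overline{t}_k^\beta) = u_k + h(1-\beta)\dot{u}_k + O(h^2)$ and $U_k^{(2)} = u(\overline{t}_k^{1-\beta}) = u_k + h\beta\dot{u}_k + O(h^2)$, and collect the $O(h^2)$ coefficients piece by piece.

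Matching the two expansions then proceeds exactly as before. The $O(h)$ terms always agree, so the scheme is consistent, and the coefficient of $\tfrac{\partial \tilde{\mathcal{L}}^{\mathcal{E}}_k}{\partial \dot{y}}\ddot{y}_k$ is $1/2$ on both sides and matches automatically. The remaining two coefficients yield the conditions $\alpha(1-\gamma)+(1-\alpha)\gamma = 1/2$ for the state-adjoint term and $\alpha(1-\beta)+(1-\alpha)\beta = 1/2$ for the control term. The former holds precisely when $\alpha=1/2$ or $\gamma=1/2$, the latter precisely when $\alpha=1/2$ or $\beta=1/2$; their conjunction is exactly the union of the two families $\{\alpha=1/2,\ \beta,\gamma\in[0,1]\}$ and $\{\beta=\gamma=1/2,\ \alpha\in[0,1]\}$, with only first-order accuracy in every remaining case. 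As an independent check on the state-adjoint part, I would note that substituting the discrete minimiser \eqref{appDEL3}--\eqref{appDEL4} into \eqref{approxLdefU} collapses it to the control-independent Lagrangian \eqref{approxLagrangian}, so the surviving $\alpha,\gamma$ conditions must, and do, coincide with those of Proposition \ref{convergencelemma}.

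The main obstacle I anticipate is conceptual rather than computational: justifying rigorously that the control-rate term $\tfrac{\partial \tilde{\mathcal{L}}^{\mathcal{E}}_k}{\partial u}\dot{u}_k$ genuinely contributes at $O(h^2)$, and thereby forces the $\beta$-condition, despite vanishing on extremals. This amounts to appealing to the variational error theorem in the extended setting where $u$ is an algebraic configuration variable, so that the order of the discrete minimisation condition \eqref{appDEL3} as an approximation of $D_3\tilde{\mathcal{L}}^{\mathcal{E}}=0$ is dictated by the control-node placement, and to verifying that the two samples at $\overline{t}_k^\beta$ and $\overline{t}_k^{1-\beta}$ realise a second-order quadrature of the minimisation condition only within the stated families. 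A cleaner complementary route for the branch $\alpha=1/2$ is to observe that this choice makes $\tilde{\mathcal{L}}_{d}^{\mathcal{E}}$ symmetric under the simultaneous exchange $(y_k,y_{k+1},U_k^{(1)},U_k^{(2)})\mapsto(y_{k+1},y_k,U_k^{(2)},U_k^{(1)})$ with $h\mapsto -h$, so the induced integrator is self-adjoint and hence of even order, delivering order two for all $\beta,\gamma$ simultaneously.
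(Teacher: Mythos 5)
Your proposal follows essentially the same route as the paper's proof: Taylor-expand both the exact semi-discrete Lagrangian, including the control-rate term $\frac{\partial \tilde{\mathcal{L}}^{\mathcal{E}}_k}{\partial u}\dot{u}_k$, and the approximation \eqref{approxLdefU} with $U_k^{(1)}$, $U_k^{(2)}$ sampled at $\overline{t}_k^{\beta}$, $\overline{t}_k^{(1-\beta)}$, then match the $O(h^2)$ coefficients, your conditions $\alpha+\gamma-2\alpha\gamma=\tfrac{1}{2}$ and $\alpha+\beta-2\alpha\beta=\tfrac{1}{2}$ being exactly the paper's requirement that the bracketed coefficients $2[\alpha(1-\gamma)+(1-\alpha)\gamma]$ and $2[\alpha(1-\beta)+(1-\alpha)\beta]$ equal one. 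Your supplementary observations --- treating $u$ as a free argument rather than substituting \eqref{eulerLagrangeu3} before expanding (which is indeed what the paper implicitly does), the collapse to \eqref{approxLagrangian} under \eqref{appDEL3}--\eqref{appDEL4} (cf.\ Remark \ref{sameapproxremark}), and the self-adjointness of the $\alpha=1/2$ branch --- are correct but do not change the underlying argument.
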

\begin{proof}
    The proof is mostly the same as in the control-independent case of Proposition \ref{convergencelemma}.
    In the exact case, besides the expansions for $y(t_k +h)$, also the control needs to be expanded $u(t_{k}+h)= u(t_k) + h\dot u(t_k) + O(h^2) = u_k + h \dot{u}_k + O(h^2)$, leading to the expansion of the Lagrangian
\begin{equation}
\tilde{\mathcal{L}}_d^{\mathcal{E},e} = \int_{t_k}^{t_{k+1}}  \tilde{\mathcal{L}}(y,\dot y,u) \,dt = h\tilde{\mathcal{L}}^\mathcal{E}_k +  \frac{h^2}{2} \left( \frac{\partial \tilde{\mathcal{L}}^\mathcal{E}_k}{\partial y}\dot {y}_k +  \frac{\tilde{\mathcal{L}}^\mathcal{E}_k}{\partial \dot{y}}\ddot{y}_k  + \frac{\tilde{\mathcal{L}}^\mathcal{E}_k}{\partial u}\dot{u}_k \right)+ O(h^3),
\label{exexpUdep}
\end{equation}
with the shorthand notation $\tilde{\mathcal{L}}^\mathcal{E}(y(t_k),\dot y(t_k), u(t_k)) = \tilde{\mathcal{L}}^\mathcal{E}_k$.
In the approximate case, the controls are evaluated at different times and have the expansions $U_k^{(1)} = u_k + h (1-\beta) \dot u_k + O(h^2), U_k^{(2)} = u_k + h \beta \dot u_k + O(h^2)$, leading to the expansion of the Lagrangian
\begin{align}
   \tilde{\mathcal{L}}_d^{\mathcal{E}} = h\tilde{\mathcal{L}}^\mathcal{E}_k +  \frac{h^2}{2} \left( 2[\alpha(1-\gamma) + (1-\alpha)\gamma]\frac{\partial \tilde{\mathcal{L}}^\mathcal{E}_k}{\partial y}\dot {y}_k +  \frac{\tilde{\mathcal{L}}^\mathcal{E}_k}{\partial \dot{y}}\ddot{y}_k  +2[\alpha(1-\beta) +(1-\alpha) \beta] \frac{\tilde{\mathcal{L}}^\mathcal{E}_k}{\partial u}\dot{u}_k \right)+ O(h^3). \label{apexpUdep}
\end{align}
Comparison of the two expansions \eqref{apexpUdep}, \eqref{exexpUdep} shows that the family of low-order integration schemes leads to second-
order approximations when 
\begin{enumerate}
    \item $\alpha=1/2$, $\beta,\gamma\in[0,1]$
    \item $\beta=\gamma=1/2,\alpha\in[0,1]$.
\end{enumerate}
Otherwise they only agree to first-order, concluding our proof. 
\end{proof}

    From Propositions \ref{convergencelemma} and \ref{convergencelemmau}  
    it follows that the new approach lets us 
    approximate $q_d,\lambda_d$ consistently, leading to methods that yield the same order of convergence for both state and costate variables $q,\lambda$.\\
\begin{remark}
\label{sameapproxremark}

    Due to the optimality conditions \eqref{appDEL3},\eqref{appDEL4} it is possible to rewrite the rest in \eqref{apUopts} to be equivalent to the control-independent ones \eqref{approxnouboundaries}. Therefore, both approaches lead to not just the same order of approximation, but are in fact the same when the same $\alpha,\gamma$ are 
    chosen. 
    Notice also that $\beta$ only has a real effect when interpreting $U_k^{(i)}$ as evaluations of the continuous control. In that regard, both from the definition of the approximation \eqref{approxLdefU} and the aforementioned optimality conditions, it only makes geometric sense for $\beta$ to coincide with $\gamma$.
\end{remark}

     Comparing the control-dependent approximation, \eqref{approxLdefU}, with the control-independent one, 
     \eqref{approxLagrangian}, shows that in the former 
     there is the additional need to discretise the controls themselves. Besides the ambiguity of choosing the discretisation, there is also an increase of variables that results in more equations 
     that need to be solved for.
$\newJ_d$ possesses $M(2(N+1)+2)$ variables $(y_d,\mu,\nu)$ to solve for, while $\newJ_d^\mathcal{E}$ depends on  $M(2(N+1)+2)+ 2S(N+1)$ variables $(y_d,U_d^{(1)},U_d^{(2)},\mu,\nu)$, where $\dim \mathcal{Q} = M, \dim \mathcal{N} = S$. This smaller number of variables means that the new approach in the control-independent case is typically faster than the dependent case due to the smaller dimensionality of the problem.

\section{Comparison with other approaches}
\label{comparison_section}
In this section, we  compare our new Lagrangian approach to other direct approaches, see e.g.~\cite{biral2016notes}, to solving the optimal control problem. In a direct method, one discretises the running cost and the dynamical constraints in \eqref{problem_definition} separately, utilizing some discretisation scheme, and then appending the discretised differential equations onto the discrete objective using Lagrange multipliers. This discrete objective function is then optimised.

A difficulty in comparing with other direct methods is, that one can arbitrarily choose discretisation schemes of one's liking, as long as the resulting equations are consistent.
However, it would be advantageous to choose schemes for the running cost and the differential equations that result in good quantitative or qualitative performance.

The new approach is compared with two different direct approaches showing how with some judicious choices they can be made equivalent.
The first direct method is based on the discretised second-order differential equations for the state variables \eqref{appDEL1}.
The second method restates the discretised state differential equation \eqref{appDEL1} in the form of a set of discrete first-order differential equations.
 
\subsection{Direct discretisation using a second-order ODE}

Let us approximate $\mathcal{J}$ \eqref{problem_definition} by performing a direct discretisation of $\hat{\mathcal{J}}^{\mathcal{E}}$ \eqref{augmentedObjective} where the controlled SODE is discretised using \eqref{appDEL1}. We discretise the running cost as
\begin{equation}
    \sum_{k=0}^{N-1} \frac{h}{2} \left(
    \alpha{U_k^{(1)} }^\top \mathrm{g}_k^\gamma U_k^{(1)} + (1-\alpha){U_k^{(2)} }^\top \mathrm{g}_k^{(1-\gamma)} U_k^{(2)}\right)\label{eq:discrete_running_cost}
\end{equation}
and append the discretised SODE using a Lagrange multiplier $\Lambda_k$. Finally, we discretise $\dot{q}(0) = v_0^-$ and $\dot{q}(T) = v_N^+$ as in \eqref{apVU_all}. This results in
\begin{align}
\tilde{\mathcal{J}}_d^\text{dir,2} &= \phi(q(T), v_N^{+}) + \mu (q_0- q^0) + \nu ( v_0^{-}- \dot q^0) + \frac{h}{2}\sum_{k=0}^{N-1} \left( \alpha{U_k^{(1)} }^\top \mathrm{g}_k^\gamma U_k^{(1)} + (1-\alpha){U_k^{(2)} }^\top \mathrm{g}_k^{(1-\gamma)} U_k^{(2)}\right) \nonumber\\
&+h\sum_{k=1}^{N-1} \Lambda_k^\top \left\{ \frac{1}{h^2}(q_{k+1} - 2 q_k + q_{k-1}) - \alpha \left[\gamma ({f_k^\gamma} +  \rho _k^\gamma {U_k^{(1)}}) + (1-\gamma) ({f_{k-1}^\gamma} +  \rho_{k-1} ^\gamma {U_{k-1}^{(1)}})\right]\right.\nonumber\\
        &\left. - (1-\alpha) \left[(1-\gamma) ({f_k^{(1-\gamma)}} +  \rho _k ^{(1-\gamma)} {U_k^{(2)}}) + \gamma ({f_{k-1}^{(1-\gamma)}} + \rho_{k-1} ^{(1-\gamma)} {U_{k-1}^{(2)}})\right] \right\}.\label{eq:Jdir2}
\end{align}

\begin{remark}
    These kinds of discretisations based on second-order ODEs appear frequently in low-order methods following the DMOC philosophy, e.g.~\cite{Schubert24}. It is rather telling that the variables involved at each node are precisely elements of $T^*\mathcal{Q}$, the configuration manifold of our new control Lagrangian, hinting at the result of the following proposition. Since these discretisations are naturally symplectic, this has implications when thought in terms of generating functions of canonical transformations \cite{goldstein02,deleon2007}. This will be discussed further in a future publication \cite{Konopik25b}.
\end{remark}

\begin{proposition}
\label{direct2eqproposition}
$\tilde{\mathcal{J}}_d^\text{dir,2}$ \eqref{eq:Jdir2} and $\tilde{\mathcal{J}}_d^{\mathcal{E}}$, as in Section \ref{sec:ApproximateOC_control_dep}, are equivalent.
\end{proposition}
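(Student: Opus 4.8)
The plan is to show that the necessary optimality conditions (KKT conditions) obtained by varying $\tilde{\mathcal{J}}_d^\text{dir,2}$ coincide, after a suitable identification of multipliers, with the optimality conditions \eqref{apUopts} derived from $\tilde{\mathcal{J}}_d^{\mathcal{E}}$. The key observation is that the two objectives differ structurally only in how the adjoint variables enter: in $\tilde{\mathcal{J}}_d^{\mathcal{E}}$ the costate $\lambda_d$ enters through the discrete Lagrangian \eqref{approxLdefU} already coupled to the dynamics, whereas in $\tilde{\mathcal{J}}_d^\text{dir,2}$ the multiplier $\Lambda_k$ is appended explicitly to the discretised SODE \eqref{appDEL1}. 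I therefore expect the correct identification to be $\Lambda_k = \lambda_k$ (possibly up to sign, to be fixed by matching the $\delta q_k$ equations), together with the observation that the running cost \eqref{eq:discrete_running_cost} in the direct method reproduces exactly the $-\tfrac12 U^\top \mathrm{g}\, U$ terms present in \eqref{approxLdefU}.

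First I would compute the stationarity conditions of $\tilde{\mathcal{J}}_d^\text{dir,2}$ by taking variations with respect to each variable: $\delta \Lambda_k$, $\delta q_k$ (interior), $\delta U_k^{(1)}$, $\delta U_k^{(2)}$, the boundary variations $\delta q_0$, $\delta q_N$, and $\delta\mu$, $\delta\nu$. The variation $\delta \Lambda_k$ immediately returns the discretised SODE, which is precisely \eqref{appDEL1}; so under the identification $\Lambda_k \leftrightarrow \lambda_k$ this matches the $\delta\lambda_k$ equation of \eqref{apUopts}. Next, $\delta U_k^{(1)}$ and $\delta U_k^{(2)}$ will each produce an algebraic relation of the form $\mathrm{g}_k^\gamma U_k^{(1)} = (\rho_k^\gamma)^\top \bar\lambda_k^\gamma$ — here I would need to check that the quadratic running-cost term and the $\Lambda_k$-coupling term combine to give exactly \eqref{appDEL3} and \eqref{appDEL4}, paying attention to the $\gamma$-weighted evaluation points and the $\alpha$, $(1-\alpha)$ prefactors. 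The $\delta q_k$ variation is the most laborious: it will generate the discrete adjoint dynamics, and I would match it term-by-term against \eqref{appDEL2}, verifying that the derivative of the running cost with respect to $q_k$ supplies precisely the $-\tfrac12 D_{q_k}\mathrm{g}(U,U)$ contributions while the derivative of the SODE-coupling supplies the $D_{q_k}(\bar\lambda\,\rho\,U)$ and $D_1 f$, $D_2 f$ contributions.

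I would then handle the boundary terms: the variations $\delta q_0$, $\delta q_N$ together with $\delta\mu$, $\delta\nu$ should reproduce \eqref{apUopt1}, \eqref{apUopt2}, and the constraints $q_0 = q^0$, $v_0^- = \dot q^0$, using that $\dot q(0)$ and $\dot q(T)$ have been discretised identically via \eqref{apVU_all} in both formulations. The conditions \eqref{apUopt6} and \eqref{apUopt7} that fix $\lambda_0$ and $\lambda_N$ arise in $\tilde{\mathcal{J}}_d^\text{dir,2}$ directly from the appearance of $v_0^-$ and $v_N^+$ in the boundary cost, so I would note that differentiating with respect to these boundary velocities yields the same transversality identifications, which is where the canonical role of $\lambda_0, \lambda_N$ as momenta conjugate to the boundary velocities (Remark \ref{rmk:discrete_velocity_lambda}) is reused. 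The main obstacle I anticipate is the bookkeeping in the $\delta q_k$ comparison: the direct method differentiates an explicitly appended constraint whereas \eqref{appDEL2} comes from differentiating the discrete Lagrangian, so the two will superficially look different and I must carefully use the product-rule structure of $D_{q_k}(\bar\lambda^\top(f + \rho U))$ and the symmetry of the $\gamma \leftrightarrow (1-\gamma)$, $k \leftrightarrow k-1$ index pairing to confirm they agree exactly. Once this term-matching is done for one representative interior node, the boundary cases follow by the same manipulation, completing the equivalence.
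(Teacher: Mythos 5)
Your proposal is correct in substance, but it takes a genuinely different and considerably more laborious route than the paper. You derive the full KKT system of $\tilde{\mathcal{J}}_d^{\text{dir},2}$ and match it equation-by-equation against \eqref{apUopts} under $\Lambda_k = \lambda_k$; the paper instead proves a stronger statement with no variations at all: by a discrete analogue of the integration by parts that defined the new Lagrangian in the continuous setting, it simply reorders the terms of $\tilde{\mathcal{J}}_d^{\mathcal{E}}$ (a discrete summation by parts on the $\Delta\lambda_k^\top \Delta q_k$ terms) and observes that the result \emph{is} $\tilde{\mathcal{J}}_d^{\text{dir},2}$ with $\Lambda_k=\lambda_k$, the two leftover boundary lines cancelling the $\lambda_N^\top v_N^{+}$ and $\lambda_0^\top v_0^{-}$ terms exactly. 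Equality of the objectives then yields equivalence of \emph{all} derived conditions as a trivial corollary, whereas your route establishes equivalence only at the level of first-order stationarity --- sufficient for the stated corollary, but weaker than what the paper's one-step identity buys, and it forces you through exactly the bookkeeping you anticipate at the boundary nodes $k=0$ and $k=N-1$, where $U_0^{(i)}$ and $U_{N-1}^{(i)}$ enter both the appended constraints and the boundary velocities \eqref{apVU_all}. One caveat you should state more carefully: the conditions \eqref{apUopt6} and \eqref{apUopt7} cannot ``arise'' from varying $\tilde{\mathcal{J}}_d^{\text{dir},2}$, since $\lambda_0$ and $\lambda_N$ are not variables of that functional at all (consistent with Proposition \ref{prp:control_dep_lambda_indep}); on the new-Lagrangian side they are appended via Hypothesis \ref{itm:H2p}. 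In your matching they must therefore be read as \emph{definitions} extending the multiplier identification to the boundary, $\lambda_0 := \nu$ and $\lambda_N := -D_2\phi^\top$ --- indeed your own $\delta U_0^{(1)}$ computation only reproduces \eqref{appDEL3} at $k=0$ after substituting $\nu$ for $\lambda_0$ in $\overline{\lambda}_0^{\gamma}$. Your gloss via ``differentiating with respect to $v_0^-$, $v_N^+$'' is consistent with the canonical-momentum reading of Remark \ref{rmk:discrete_velocity_lambda}, but it is an identification, not a genuine variation of the direct objective.
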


\begin{proof}

    We prove equivalence using a discrete analogue of the process of integration by parts used in the continuous setting to define the new control Lagrangian in the first place. Consider the new objective function $\newJ_d^\mathcal{E}$ as defined in Section \ref{sec:ApproximateOC_control_dep}. Upon reordering of terms
     \begin{align}
        \tilde{\mathcal{J}}_d^\mathcal{E} &= \phi + \mu(q_0 - q^0) + \nu (v_0^{-} - \dot q^0) + \lambda_N v_N^{+} - \lambda_0 v_0^{-} + \frac{h}{2}\sum_{k=0}^{N-1} \left(\alpha {U_k^{(1)}}^\top \mathrm{g}_k^\gamma U_k^{(1)} +(1-\alpha) {U_k^{(2)}}^\top \mathrm{g}_k^{(1-\gamma)} U_k^{(2)}\right)\nonumber\\
        &+ h\sum_{k=1}^{N-1}\lambda_k^\top \left\lbrace \frac{1}{h^2}(q_{k+1} - 2 q_k + q_{k-1}) - \alpha \left[\gamma ({f_k^\gamma} +  \rho _k ^\gamma){U_k^{(1)}} + (1-\gamma) ({f_{k-1}^\gamma}+  \rho_{k-1} ^\gamma) {U_{k-1}^{(1)}}\right]\right.\nonumber\\
        &\left. - (1-\alpha) \left[(1-\gamma) \left({f_k^{(1-\gamma)}} +  \rho _k ^{(1-\gamma)} {U_k^{(2)}}\right) + \gamma \left({f_{k-1}^{(1-\gamma)}} +  \rho_{k-1} ^{(1-\gamma)} {U_{k-1}^{(2)}}\right)\right] \vphantom{\frac{2q}{h^2}}\right\rbrace\nonumber\\
        &+ \lambda_0^\top \left[\Delta q_0^\top -h\alpha\gamma\left( {f_0^\gamma}+ \rho_0^\gamma U_0^{(1)}\right) - h(1-\alpha)(1-\gamma) \left( {f_0^{(1-\gamma)}}+ \rho_0^{(1-\gamma)}U_0^{(2)}\right)\right]\nonumber\\
        &-\lambda_N^\top\left[\Delta q_{N-1} +h\alpha(1-\gamma)\left(  {f_{N-1}^\gamma}+\rho_{N-1}^{\gamma}U_{N-1}^{(1)} \right) + h(1-\alpha)\gamma\left({f_{N-1}^{(1-\gamma)}}+\rho_{N-1}^{(1-\gamma)}U_{N-1}^{(2)}\right)\right],\label{reorderedObj}
    \end{align}
    one can identify the objective $\newJ_d^{\text{dir},2}$ when $\Lambda_k=\lambda_k$ and realize that the last two lines in \eqref{reorderedObj} cancel the $\lambda_N^\top v_N^{+}, \lambda_0^\top v_0^{-}$ terms in the first line, concluding the proof.
\end{proof}

This results in the following trivial
\begin{corollary}
    The necessary conditions for optimality produced by $\tilde{J}_d^{\text{dir},2}$ and $\tilde{\mathcal{J}}_d^{\mathcal{E}}$ are equivalent.
\end{corollary}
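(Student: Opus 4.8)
The plan is to leverage Proposition \ref{direct2eqproposition} directly, since it already establishes that the two objective functionals coincide; the corollary is then essentially a statement that equal functions have equal differentials. First I would recall that the necessary conditions for optimality in both formulations are, by definition, the stationarity conditions $\delta \tilde{\mathcal{J}}_d^{\text{dir},2} = 0$ and $\delta \tilde{\mathcal{J}}_d^{\mathcal{E}} = 0$ taken with respect to their respective free variables. Hence it suffices to show that these two families of variations produce the same equations, variable by variable.

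The next step is to set up the correspondence of the variable sets. Both objectives share the state nodes $q_d$, the discrete controls $U_d^{(1)}, U_d^{(2)}$, and the initial multipliers $\mu, \nu$. The only formal difference is that $\tilde{\mathcal{J}}_d^{\mathcal{E}}$ carries the adjoint nodes $\lambda_d = \{\lambda_k\}_{k=0}^{N}$, whereas $\tilde{\mathcal{J}}_d^{\text{dir},2}$ carries the Lagrange multipliers $\{\Lambda_k\}_{k=1}^{N-1}$ enforcing the discretised SODE. Under the identification $\Lambda_k = \lambda_k$ for $k = 1, \dots, N-1$ already used in Proposition \ref{direct2eqproposition}, these sets agree on their effective content: by Proposition \ref{prp:control_dep_lambda_indep} the functional $\tilde{\mathcal{J}}_d^{\mathcal{E}}$ does not depend on $\lambda_0$ or $\lambda_N$, so the absence of these two nodes on the direct side introduces no discrepancy whatsoever.

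With this identification in place, Proposition \ref{direct2eqproposition} yields $\tilde{\mathcal{J}}_d^{\text{dir},2} = \tilde{\mathcal{J}}_d^{\mathcal{E}}$ as functions on the common variable space. Since equal functions have equal differentials, the variation of either objective with respect to any shared variable reproduces the identical equation: varying $\Lambda_k = \lambda_k$ gives the discrete state dynamics \eqref{appDEL1}, varying $q_k$ gives the discrete adjoint dynamics \eqref{appDEL2}, varying the controls gives the discrete minimisation conditions \eqref{appDEL3}--\eqref{appDEL4}, and the variations of $q_0$, $q_N$, $\mu$, $\nu$ together with the boundary control variations reproduce the matching boundary conditions. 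Appending Hypothesis \ref{itm:H2p} to fix $\lambda_0$ and $\lambda_N$ on both sides completes the correspondence.

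The only point requiring any care, and it is genuinely minor, is the boundary bookkeeping of the variable correspondence: confirming that the adjoint nodes $\lambda_0, \lambda_N$ play no role on the new-Lagrangian side, which is precisely the content of Proposition \ref{prp:control_dep_lambda_indep}. Beyond this, the result is immediate from the equality of the two functionals, so I do not anticipate any substantive obstacle; the label \emph{trivial corollary} is warranted.
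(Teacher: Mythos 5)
Your proposal is correct and follows exactly the route the paper intends: the corollary is stated as a trivial consequence of Proposition \ref{direct2eqproposition}, since the identification $\Lambda_k = \lambda_k$ makes the two objectives equal as functions of a common variable set, whence their stationarity conditions coincide. Your explicit bookkeeping of the variable correspondence --- in particular invoking Proposition \ref{prp:control_dep_lambda_indep} for the vacuity of the $\lambda_0,\lambda_N$ variations and appending Hypothesis \ref{itm:H2p} on both sides --- merely spells out what the paper leaves implicit, so no substantive difference exists.
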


\begin{remark}
    Note that after introducing the identification $\Lambda_k = \lambda_k$ from the proof of Proposition \ref{direct2eqproposition} in \eqref{eq:Jdir2}, $\lambda_0$ and $\lambda_N$ are completely absent, as shown in Proposition \ref{prp:control_dep_lambda_indep}.
\end{remark}

\subsection{Direct discretisation using a first-order ODE}
\label{firstordersection}
The prior direct approach using the controlled SODE directly is non-standard.

Usually, one works with the first-order equations, treating the position and velocity as independent variables, as discussed in Remark \ref{rmk:OCP_first_order}.
Thus, let us discretise \eqref{eq:standardAugmentedObjective} using \eqref{eq:discrete_running_cost} as running cost and the discrete first-order state equations
\begin{subequations}
\begin{align}
    \frac{1}{h}(v_{k+1}-v_k) &=   \alpha\left(f_{k}^\gamma +\rho_{k}^\gamma U_{k}^{(1)}\right) +  (1-\alpha)\left(f_{k}^{(1-\gamma)} +\rho_{k}^{(1-\gamma)} U_{k}^{(2)}\right),\label{1orderscheme1}\\
    \frac{1}{h}(q_{k+1}-q_k) &=   v_k + h\alpha\gamma\left(f_k^\gamma +\rho_k^\gamma U_k^{(1)}\right) + h(1-\alpha)(1-\gamma) \left(f_k^{(1-\gamma)} +\rho_k^{(1-\gamma)} U_k^{(2)}\right),\label{1orderscheme2}
\end{align}
\label{1orderschemeinsert}
\end{subequations}
for $k=0,\dots,N-1$.
This results in the following discrete augmented objective function:
   \begin{align}
        \mathcal{J}_d^{\text{dir},1} &= \phi(q_N, v_N ) + \mu (q_0 - q^0) + \nu (v_0 - \dot q^0) + \frac{h}{2}\sum_{k=0}^{N-1} \left( \alpha{U_k^{(1)} }^\top \mathrm{g}_k^\gamma U_k^{(1)} + (1-\alpha){U_k^{(2)} }^\top \mathrm{g}_k^{(1-\gamma)} U_k^{(2)}\right)\nonumber\\
        &+h\sum_{k=0}^{N-1} {\lambda_{k+1}^q}^\top \left[ \frac{1}{h}(q_{k+1}- q_k) - v_k - h\alpha\gamma\left(f_k^\gamma +\rho_k^\gamma U_k^{(1)}\right) - h(1-\alpha)(1-\gamma) \left(f_k^{(1-\gamma)} +\rho_k^{(1-\gamma)} U_k^{(2)}\right)\right] \nonumber\\
        &+ h\sum_{k=0}^{N-1}{\lambda_{k+1}^v}^\top \left[ \frac{1}{h}(v_{k+1} - v_{k}) -  \alpha\left(f_{k}^\gamma +\rho_{k}^\gamma U_{k}^{(1)}\right) -  (1-\alpha)\left(f_{k}^{(1-\gamma)} +\rho_{k}^{(1-\gamma)} U_{k}^{(2)}\right)\right].\label{correctfirstorderobjective}
    \end{align}

\begin{proposition}
$\tilde{\mathcal{J}}_d^\text{dir,1}$ \eqref{correctfirstorderobjective} and $\tilde{\mathcal{J}}_d^{\mathcal{E}}$, as in Section \ref{sec:ApproximateOC_control_dep}, are equivalent.
\end{proposition}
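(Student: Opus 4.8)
The plan is to establish the equivalence between $\tilde{\mathcal{J}}_d^{\text{dir},1}$ and $\tilde{\mathcal{J}}_d^{\mathcal{E}}$ by a discrete analogue of the elimination of the velocity variables $v_k$ together with a summation-by-parts argument, exactly mirroring both the continuous reduction of Remark \ref{rmk:OCP_first_order} and the proof of Proposition \ref{direct2eqproposition}. The first-order objective \eqref{correctfirstorderobjective} carries two families of multipliers, $\lambda_{k+1}^q$ and $\lambda_{k+1}^v$, and extra velocity nodal variables $v_k$. The strategy is to show that, upon imposing the identifications suggested by the continuous theory, these reduce to the single costate $\lambda_k$ and that \eqref{correctfirstorderobjective} collapses onto the reordered expression \eqref{reorderedObj} for $\tilde{\mathcal{J}}_d^{\mathcal{E}}$.

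First I would read off the variations of $\tilde{\mathcal{J}}_d^{\text{dir},1}$ with respect to the velocity nodes $v_k$. Stationarity in $v_k$ for interior $k$ gives a discrete relation tying $\lambda_{k+1}^v - \lambda_k^v$ to $\lambda_{k+1}^q$, which is precisely the discrete counterpart of the continuous identity $\lambda_q = -\dot{\lambda} - (\partial f/\partial \dot q)^\top \lambda$ noted in Remark \ref{rmk:OCP_first_order}; this lets me eliminate the $\lambda^q$ multipliers in favour of differences of $\lambda^v$. The natural identification here is $\lambda_k^v = \lambda_k$, matching $\lambda \equiv \lambda_v$ from the continuous case. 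Second, I would substitute \eqref{1orderscheme2} into the $v_k$-terms to convert the first-order $q$-update into the corresponding second-order update \eqref{appDEL1}, which is the whole point of the judicious choice of the schemes \eqref{1orderschemeinsert}: the position update is built so that eliminating $v$ reproduces exactly the second-order discretisation underlying $\tilde{\mathcal{J}}_d^{\mathcal{E}}$.

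With $v_k$ eliminated and $\lambda_k^v$ identified with $\lambda_k$, the remaining step is a summation-by-parts on the term $\sum_k {\lambda_{k+1}^v}^\top (v_{k+1} - v_k)/h$, which shifts the difference from $v$ onto $\lambda$ and produces boundary contributions $\lambda_N^\top v_N - \lambda_0^\top v_0$. These boundary terms are precisely the $\lambda_N^\top v_N^+ - \lambda_0^\top v_0^-$ contributions appearing in the first line of \eqref{reorderedObj}, once the boundary velocities are read through \eqref{apVU_all}. After collecting terms, the objective should match \eqref{reorderedObj} term by term, at which point equivalence with $\tilde{\mathcal{J}}_d^{\mathcal{E}}$ follows from Proposition \ref{direct2eqproposition} (or directly, since \eqref{reorderedObj} \emph{is} $\tilde{\mathcal{J}}_d^{\mathcal{E}}$ reordered). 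I would also note that $\lambda_0^q, \lambda_N^q$ and the leftover velocity endpoints are fixed by the boundary variations, reproducing the reduced transversality conditions of Remark \ref{rmk:OCP_first_order}.

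The main obstacle I anticipate is bookkeeping at the boundaries rather than in the interior. The sums in \eqref{correctfirstorderobjective} run from $k=0$, so the $v_0$ and $v_N$ endpoints, and the index-shifted multipliers $\lambda_1^q,\dots,\lambda_N^q$, require careful handling to ensure the summation-by-parts boundary terms land exactly on $\lambda_N^\top v_N^+ - \lambda_0^\top v_0^-$ and that no spurious endpoint term survives. In particular one must verify that the discrete velocity endpoints $v_0, v_N$ appearing in $\tilde{\mathcal{J}}_d^{\text{dir},1}$ coincide, after using \eqref{1orderscheme2} at the endpoints, with the $v_0^-$ and $v_N^+$ of \eqref{apVU_all}; this is where the specific weighting by $\alpha\gamma$ and $(1-\alpha)(1-\gamma)$ in the $q$-update \eqref{1orderscheme2} is essential and must be checked to be consistent with the boundary velocity definitions. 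Once these endpoint identifications are confirmed, the equivalence is immediate.
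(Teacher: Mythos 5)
Your proposal is correct and follows essentially the same route as the paper's proof: eliminating the $\lambda^q$ multipliers via the $\delta v_k$ stationarity relation \eqref{opt1dir3} (extended to $k=0$ by introducing $\lambda_0^v$), solving \eqref{1orderscheme2} and \eqref{1orderscheme1} for the discrete velocities so that the endpoints reproduce $v_0^-$ and $v_N^+$ of \eqref{apVU_all}, identifying $\lambda_k^v = \lambda_k$, and concluding via Proposition \ref{direct2eqproposition}. The only cosmetic difference is that you land on the reordered form \eqref{reorderedObj} directly by summation-by-parts, whereas the paper first matches $\tilde{\mathcal{J}}_d^{\text{dir},2}$ and then invokes Proposition \ref{direct2eqproposition} --- the same algebra, since the reordering in \eqref{reorderedObj} \emph{is} that summation-by-parts; your flagged boundary bookkeeping (the shifted multipliers $\lambda_1^q,\dots,\lambda_N^q$ and the endpoint identifications) indeed works out exactly as you anticipate.
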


\begin{proof}
    Informed by the fact that the variation of $\mathcal{J}_d^{\text{dir},1}$ with respect to $v_k$ for $k=1,\dots, N-1$, yields
    \begin{equation}
        \delta v_k :\; \lambda_k^v - \lambda_{k+1}^v- h \lambda_{k+1}^q= 0, 
        \label{opt1dir3}
    \end{equation}
    we assume this relation is valid for $k = 0$, introducing $\lambda_0^v$, and use it to eliminate all instances of $\lambda_k^q$ in \eqref{correctfirstorderobjective}. Solving for $v_k$ in \eqref{1orderscheme2} lets us define $v_k^-(q_k,q_{k+1},U_k^{(1)},U_k^{(2)},h)$, which coincides with \eqref{apVU1} when $k = 0$. Solving for $v_{k+1}$ in \eqref{1orderscheme1}, and substituting $v_k$ by $v_k^-$ lets us define $v_{k+1}^+(q_k,q_{k+1},U_k^{(1)},U_k^{(2)},h)$, which also coincides with \eqref{apVU2} when $k = N-1$. Inserting these in place of $v_0$ and $v_N$ in the discrete cost function after having eliminated $\lambda_k^q$, and rearranging leads to
    \begin{align}
\tilde{\mathcal{J}}_d^\text{dir,1} &= \phi(q(T), v_N^+) + \mu (q_0- q^0) + \nu ( v_0^{-}- \dot q^0)+ \frac{h}{2}\sum_{k=0}^{N-1} \left( \alpha{U_k^{(1)} }^\top \mathrm{g}_k^\gamma U_k^{(1)} + (1-\alpha){U_k^{(2)} }^\top \mathrm{g}_k^{(1-\gamma)} U_k^{(2)}\right)  \nonumber\\
&+h\sum_{k=1}^{N-1} {\lambda_{k}^v}^\top \left\{ \frac{q_{k+1} - 2 q_k + q_{k-1}}{h^2} - \alpha \left[\gamma ({f_k^\gamma} +  \rho _k^\gamma {U_k^{(1)}}) + (1-\gamma) ({f_{k-1}^\gamma} +  \rho_{k-1} ^\gamma {U_{k-1}^{(1)}})\right]\right.\nonumber\\
        &\left. - (1-\alpha) \left[(1-\gamma) ({f_k^{(1-\gamma)}} +  \rho _k ^{(1-\gamma)} {U_k^{(2)}}) + \gamma ({f_{k-1}^{(1-\gamma)}} + \rho_{k-1} ^{(1-\gamma)} {U_{k-1}^{(2)}})\right]\label{dir2obj} \right\}.
\end{align}
Notice that the terms multiplied by $\lambda_0^v$ and $\lambda_N^v$ vanish identically, removing them from the discrete functional.
Identifying $\lambda_k^v = \lambda_k$ immediately yields equality of $\tilde{\mathcal{J}}_d^\text{dir,1}$ with $\tilde{\mathcal{J}}_d^\text{dir,2}$. Further, since by Proposition \ref{direct2eqproposition} the second-order direct approach is equivalent to the corresponding new Lagrangian approach, so is also the first-order approach defined in \eqref{correctfirstorderobjective} concluding the proof.
\end{proof}
Once more, the following trivial result follows.
\begin{corollary}
    The necessary conditions for optimality produced by $\tilde{J}_d^{\text{dir},1}$ and $\tilde{\mathcal{J}}_d^{\mathcal{E}}$ are equivalent.
\end{corollary}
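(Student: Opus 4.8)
The plan is to exploit transitivity: since Proposition~\ref{direct2eqproposition} already establishes that $\tilde{\mathcal{J}}_d^{\text{dir},2}$ is equivalent to $\tilde{\mathcal{J}}_d^{\mathcal{E}}$, it suffices to prove that the first-order direct formulation $\tilde{\mathcal{J}}_d^{\text{dir},1}$ in \eqref{correctfirstorderobjective} is equivalent to the second-order one $\tilde{\mathcal{J}}_d^{\text{dir},2}$ in \eqref{eq:Jdir2}. The conceptual bridge is the discrete analogue of the continuous identity $\lambda_q = -\dot\lambda - \partial_{\dot q} f$ recorded in Remark~\ref{rmk:OCP_first_order}: the first-order formulation carries two families of multipliers $\lambda_k^q$ and $\lambda_k^v$, whereas the second-order one carries a single costate, so the first step is to algebraically eliminate the $\lambda_k^q$ in favour of the $\lambda_k^v$ and identify the latter with $\lambda_k$.

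First I would compute the stationarity condition with respect to the interior velocities $v_k$, $k = 1,\dots,N-1$, which yields the three-term relation \eqref{opt1dir3}, namely $\lambda_k^v - \lambda_{k+1}^v - h\lambda_{k+1}^q = 0$. This expresses each $\lambda_{k+1}^q$ as a discrete difference of consecutive $\lambda^v$ and is precisely the discrete counterpart of the continuous relation between the two costates. I would then postulate this same relation for the missing index $k=0$ --- thereby introducing an auxiliary $\lambda_0^v$ --- and use it to substitute every occurrence of $\lambda_k^q$ in \eqref{correctfirstorderobjective} by the corresponding $\lambda^v$-difference. This is the discrete integration-by-parts step that mirrors the partial integration used in the continuous setting to pass from \eqref{augmentedObjective} to \eqref{newAugmentedObjective}.

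Next I would eliminate the independent velocity variables. Solving the discrete position equation \eqref{1orderscheme2} for $v_k$ defines a quantity $v_k^-(q_k,q_{k+1},U_k^{(1)},U_k^{(2)},h)$, and one checks that at $k=0$ it reproduces the boundary velocity \eqref{apVU1}; solving the discrete velocity equation \eqref{1orderscheme1} for $v_{k+1}$ and inserting $v_k = v_k^-$ defines $v_{k+1}^+$, which at $k=N-1$ reproduces \eqref{apVU2}. Substituting these expressions for $v_0$ and $v_N$ into the boundary and terminal terms and collecting the telescoped $\lambda^v$-sum, I expect the functional to collapse to exactly \eqref{dir2obj}, which under the identification $\lambda_k^v = \lambda_k$ is $\tilde{\mathcal{J}}_d^{\text{dir},2}$. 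The equivalence of the necessary conditions then follows, and chaining with Proposition~\ref{direct2eqproposition} completes the argument.

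The main obstacle I anticipate is the boundary bookkeeping rather than the interior telescoping. Eliminating $\lambda_k^q$ globally requires the extrapolated relation at $k=0$, and I must verify that the terms it produces which are multiplied by the artificial $\lambda_0^v$ and by $\lambda_N^v$ cancel identically, so that no spurious contribution survives and $v_0^-$, $v_N^+$ emerge with exactly the coefficients of \eqref{apVU_all}. Aligning the index ranges, the factors of $h$, and the $\alpha,\gamma$-weightings of the $f_k^\gamma + \rho_k^\gamma U_k^{(\cdot)}$ terms between the two summation conventions is where the care is needed; everything else is routine rearrangement.
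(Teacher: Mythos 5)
Your proposal is correct and takes essentially the same route as the paper: its proof of the preceding proposition likewise uses the interior stationarity condition $\delta v_k\colon \lambda_k^v - \lambda_{k+1}^v - h\lambda_{k+1}^q = 0$, extrapolated to $k=0$ to introduce $\lambda_0^v$, to eliminate the $\lambda_k^q$; then solves \eqref{1orderscheme2} and \eqref{1orderscheme1} to define $v_k^-$ and $v_{k+1}^+$ coinciding with \eqref{apVU_all} at the boundaries, notes the identical vanishing of the terms multiplied by $\lambda_0^v$ and $\lambda_N^v$, identifies $\lambda_k^v = \lambda_k$ to arrive at \eqref{dir2obj}, and chains through Proposition \ref{direct2eqproposition}. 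The corollary on the necessary conditions then follows exactly as you conclude.
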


\begin{remark}
    The first-order direct approach \eqref{1orderschemeinsert} lets us 
    identify the family of low-order integration schemes with known integration schemes for discretising the state equations:
    \begin{itemize}
        \item Midpoint rule: $\beta=\gamma=1/2,\alpha\in[0,1]$.
        \item Semi-implicit Euler $\alpha=\beta=\gamma\in \{0,1\}$ (explicit $q$, implicit $v$).
        \item Semi-implicit Euler $\gamma=\beta \in\{0,1\}, \alpha = (1-\gamma)$ (implicit $q$, explicit $v$).
        \item Partitioned trapezoidal rule:
        $\alpha=1/2,\beta=\gamma\in\{0,1\}$.
        
    \end{itemize}
\end{remark}
\begin{remark}
\label{exampleequivalence}
    The equivalence of the first-order direct approach allows us to validate the new approach optimality conditions \eqref{apUopts} with the solution of the first-order direct approach objective $\mathcal{J}_d^{\text{dir},1}$, which can be optimised via standard optimisation algorithms. For the same choice of parameters and integration scheme, the solutions have to be the same. 
\end{remark}

\section{Conserved quantities in the optimal control context -- Noether's theorem and symplecticity}
\label{symmetrysection}
Stating the optimal control problem in Lagrangian form
allows us to analyse its conserved quantities concisely via invariances of the respective Lagrangians. One such is the invariance of the optimal control problem w.r.t. a one-parameter symmetry group leading to conserved first-integrals of the Lagrangians by virtue of Noether's theorem.

Further, deriving 
variational integrators 
by approximating the exact discrete Lagrangians leads to optimality conditions that are naturally symplectic in the optimal-control state-adjoint space $T^*(T^*\mathcal{Q}).$

\subsection{Symmetries in the continuous setting}
The new Lagrangian framework 
allows us to derive conserved quantities based on the invariance of 
the corresponding Lagrangian with respect to the action of a Lie group  $\mathcal{G}$.

Following \cite{leyendecker2024new}, we consider only so-called \textit{point transformations} here. These are transformations generated by the action of $\mathcal{G}$ on the configuration space $\mathcal{Q}$ of the SODE.\\

Beginning with the control-independent case, let $\Phi : \mathcal{G} \times \mathcal{Q} \to \mathcal{Q}$ denote said action, and write $\Phi_{g}: \mathcal{Q} \to \mathcal{Q}$, with $g \in \mathcal{G}$, for the associated diffeomorphism. Since $T^*\mathcal{Q}$ is the configuration space of the new control Lagrangian, we consider the (left) cotangent lift of $\Phi$, which we denote by $\tilde{\Phi} : \mathcal{G} \times T^*\mathcal{Q} \to T^*\mathcal{Q}$ locally defined as
\begin{equation*}
    \tilde{\Phi}(g, q, \lambda) = \left(\Phi_g(q), \left(D \Phi_{g^{-1}}(q)\right)^\top \lambda \right).
\end{equation*}
Moreover, we need the tangent lift of this one, since that is the space of definition of $\tilde{\mathcal{L}}$. This is denoted as $\hat{\tilde{\Phi}} : \mathcal{G} \times T(T^*\mathcal{Q}) \to T(T^*\mathcal{Q})$, and locally defined as
\begin{align*}
    \hat{\tilde{\Phi}}(g, q, \lambda, v, v_{\lambda}) &= \left( \tilde{\Phi}_g(q, \lambda), D_1 \tilde{\Phi}_g(q, \lambda) v + D_2 \tilde{\Phi}_g(q, \lambda) v_{\lambda}
    \right)\\
    &= \left(\Phi_g(q), \left(D \Phi_{g^{-1}}(q)\right)^{\top} \lambda, D \Phi_{q}(q) v,  \left(D^2 \Phi_{g^{-1}}(q) v \right)^{\top} \lambda + \left(D \Phi_{g^{-1}}(q)\right)^\top v_{\lambda} \right).
\end{align*}
For the control-dependent case, we need to consider the action of $\mathcal{G}$ on the space of controls too. Let $\Phi^{\mathcal{E}}: \mathcal{G} \times \mathcal{E} \to \mathcal{E}$ be such an action and assume $(\mathcal{E},\pi^{\mathcal{E}},\mathcal{Q},\rho)$ is a $\mathcal{G}$-equivariant anchored vector bundle \cite{leyendecker2024new}. Then, locally
\begin{equation*}
    \Phi^{\mathcal{E}}_g(q,u) = (\Phi_g(q),\Psi_g(q) u)
\end{equation*}
with $\Psi_g: \mathcal{U} \subset \mathcal{Q} \to (\pi^{\mathcal{E}})^{-1}(\mathcal{U})$ 
providing the transformation of the control fibres and $\rho(\Phi_g(q)) \Psi_g(q) u = D \Phi_g(q) \rho(q) u$.\\

The lifted action $\hat{\tilde{\Phi}}^{\mathcal{E}} : \mathcal{G} \times T(T^*\mathcal{Q}) \oplus_{\mathcal{Q}} \mathcal{E} \to T(T^*\mathcal{Q}) \oplus_{\mathcal{Q}} \mathcal{E}$, is thus locally defined as
\begin{align*}
    \hat{\tilde{\Phi}}^{\mathcal{E}}(g, q, \lambda, v, v_{\lambda}, u) =
    &\left(\Phi_g(q), \left(D \Phi_{g^{-1}}(q)\right)^{\top} \lambda, D \Phi_{q}(q) v,  \left(D^2 \Phi_{g^{-1}}(q) v \right)^{\top} \lambda + \left(D \Phi_{g^{-1}}(q)\right)^\top v_{\lambda}, \Psi_g(q) u\right).
\end{align*}

Consider a one-parameter group of transformations, i.e. a smooth curve $g: \mathbb{R} \to \mathcal{G}$, $g_s = g(s)$, with $g_0 = e$ the identity in $\mathcal{G}$, that is also an additive subgroup.
If our control system is equivariant 
\begin{equation} \label{eq:equivariance}
    f(\Phi_{g_s}(q), D \Phi_{g_s} \dot q) + \rho (\Phi_{g_s}(q)) \Psi_{g_s}(q) u = D \Phi_{g_s} \left[ f(q,\dot q) + \rho(q) u\right]
\end{equation}
    and 
    the running cost is invariant
    \begin{equation} \label{eq:cost.invariance}
        \mathrm{g}(\Phi_{g_s}(q)) (\Psi_{g_s}(q)u, \Psi_{g_s}(q)u) = \mathrm{g}(q) (u,u)
    \end{equation}
    under the symmetry action, then the new Lagrangians $\tilde{\mathcal{L}}, \tilde{\mathcal{L}}^\mathcal{E}$ are invariant under $\mathcal{G}$ and the optimal control problem admits a conserved quantity as it was proven in \cite{leyendecker2024new}. Using Noether's theorem, it was shown that the momentum map
    \begin{align}
        I(y,\dot y) &= \frac{\partial \newL}{\partial \dot y}(y,\dot y) \cdot \left.\frac{d}{ds}\right|_{s=0}\tilde{\Phi}_{g_s}(y)  = \frac{\partial \newL^\mathcal{E}}{\partial \dot y}(y,\dot y,u) \cdot \left.\frac{d}{ds}\right|_{s=0}\tilde{\Phi}_{g_s}(y) \equiv p^{\top} \cdot \left.\frac{d}{ds}\right|_{s=0}\tilde{\Phi}_{g_s}(y)
        \label{contconserved}
    \end{align}
    is a first integral of the corresponding flow, for optimal solution curves $(y,u)$.

    \subsection{Symmetries in the discrete setting}

    The group action $\tilde{\Phi}_{g}$ 
    induces an action on the discrete product space \cite{marsden2001discrete},  here $T^*\mathcal Q\times T^*\mathcal Q$, canonically, called the product or diagonal action 
    \begin{align}
     \Phi_{g}^{T^*\mathcal Q\times T^*\mathcal Q}(y_k,y_{k+1}) &= (\tilde{\Phi}_{g}(y_k),\tilde{\Phi}_{g}(y_{k+1})).\label{liftedaction}
     \end{align}
      We say that a 
     discrete Lagrangian $\newL_d$ is invariant under the diagonal action $\Phi_{g}^{T^*\mathcal Q\times T^*\mathcal Q}$ 
     if 
    \begin{align}
       \left(\newL_d \circ \Phi_{g}^{T^*\mathcal Q\times T^*\mathcal Q}\right)(y_k, y_{k+1})  &=  \newL_d(y_k, y_{k+1}).
        \label{discreteInvariance}
    \end{align}
    If $\tilde{\mathcal{L}}$ is invariant with respect to $\hat{\tilde{\Phi}}_{g}$, it is immediate to show from its definition that $\tilde{\mathcal{L}}_d^e$ will be invariant with respect to $\Phi_{g}^{T^*\mathcal Q\times T^*\mathcal Q}$. However, if $\tilde{\mathcal{L}}_d$ is an approximation of it, this invariance may or may not be inherited.\\
    
    A discrete version of Noether's theorem was proven in \cite{marsden2001discrete}, Theorem 1.3.3. This result is readily applicable in our case for $\newL_d: T^*\mathcal{Q} \times T^* \mathcal{Q} \to \mathbb{R}$ as discrete Lagrangian, and $\Phi_{g}^{T^*\mathcal Q\times T^*\mathcal Q}$ as group action. It says that if $\newL_d$ is invariant under the action of a one-parameter group of transformations $g_s$
    then,
    
    \begin{equation}
        I_d = (p_{y,k}^-)^\top \left.\frac{d}{ds}\right|_{s=0} \tilde{\Phi}_{g_s}(y_k)= (p_{y,k}^+)^\top \left.\frac{d}{ds}\right|_{s=0} \tilde{\Phi}_{g_s}(y_k),
        \label{eq:Noether_integral}
    \end{equation}
    with $p_{y,k}^-=p_{y,k}^+$ the momentum of $y_k$ defined in \eqref{eq:discreteMomentadef}, is conserved. This discrete first integral is equivalent to the continuous one in \eqref{contconserved} evaluated at the nodes.\\

     We now turn our attention to the control-dependent case, where it is necessary to separate between exact and approximate cases.
      In the semi-discrete case $\newL_d^{\mathcal{E},e}$, the group action $\tilde{\Phi}_{g}$ 
    induces an action on $T^*\mathcal Q\times T^*\mathcal Q\times C^1([0,T],\mathcal{E})$ (see footnote in Definition \ref{def:newControlDepDiscLag}), of the form
        \begin{equation}
            \Phi_{g}^{T^*\mathcal{Q}\times T^*\mathcal{Q}\times \mathcal{E}} (y_k,y_{k+1},u_{k,k+1}) = (\tilde{\Phi}_{g}(y_k),\tilde{\Phi}_{g}(y_{k+1}),\tilde{u}_{k,k+1})
                \label{controlledLiftedAction}
        \end{equation}  
            with
      \begin{equation*}
            \tilde{u}_{k,k+1}(t) = \Psi_{g}(\pi^{\mathcal{E}}(u_{k,k+1}(t)))\, u_{k,k+1}(t), \quad t \in [t_k,t_{k+1}].
      \end{equation*}
      The control-dependent Lagrangian $\tilde{\mathcal{L}}_d^{\mathcal{E},e}$ is said to be invariant under the 
      action $\Phi_{g}^{T^*\mathcal{Q}\times T^*\mathcal{Q}\times \mathcal{E}}$ 
      if the following holds
    \begin{equation}
        \left(\tilde{\mathcal{L}}_d^{\mathcal{E},e} \circ \Phi_{g}^{T^*\mathcal{Q}\times T^*\mathcal{Q}\times \mathcal{E}}  \right) (y_k,y_{k+1},u_{k,k+1}) = \tilde{\mathcal{L}}_d^{\mathcal{E},e} (y_k, y_{k+1}, u_{k,k+1}).
    \end{equation}

    This definition of the symmetry action allows us to state the following 
    \begin{theorem}[Exact semi-discrete Noether's theorem]
    \label{discretefirstIntegral}
             If $\tilde{\mathcal{L}}_d^{\mathcal{E},e}$ is invariant with respect to the action of a one-parameter group of transformations ${g_s}\in \mathcal{G}$, then the first integral \eqref{eq:Noether_integral} with $p_{y,k}^-=p_{y,k}^+$ the momentum of $y_k$ defined in Proposition \ref{prp:UDiscreteFibreDer} is conserved along solutions $(y_d,u_d)$ of \eqref{exactboundaries}.
    \end{theorem}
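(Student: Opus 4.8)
The plan is to follow the template of the discrete Noether theorem of \cite{marsden2001discrete} (Theorem 1.3.3), already invoked above in the control-independent case, and to isolate the single genuinely new ingredient: the contribution of the control variation. First I would introduce the infinitesimal generators
\begin{equation*}
    \zeta(y) = \left.\frac{d}{ds}\right|_{s=0}\tilde{\Phi}_{g_s}(y), \qquad \zeta_u(t) = \left.\frac{d}{ds}\right|_{s=0}\Psi_{g_s}\big(\pi^{\mathcal{E}}(u_{k,k+1}(t))\big)\,u_{k,k+1}(t),
\end{equation*}
the latter being the variation of the control curve induced by the action \eqref{controlledLiftedAction}. Differentiating the assumed invariance $\tilde{\mathcal{L}}_d^{\mathcal{E},e}\circ\Phi_{g_s}^{T^*\mathcal{Q}\times T^*\mathcal{Q}\times\mathcal{E}} = \tilde{\mathcal{L}}_d^{\mathcal{E},e}$ with respect to $s$ at $s=0$ and applying the chain rule to the three arguments $(y_k,y_{k+1},u_{k,k+1})$ yields
\begin{equation*}
    D_1\tilde{\mathcal{L}}_{d,k}^{\mathcal{E},e}\cdot\zeta(y_k) + D_2\tilde{\mathcal{L}}_{d,k}^{\mathcal{E},e}\cdot\zeta(y_{k+1}) + D_3\tilde{\mathcal{L}}_{d,k}^{\mathcal{E},e}[\zeta_u] = 0,
\end{equation*}
the right-hand side being zero because the untransformed Lagrangian does not depend on $s$.

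The hard part, and the only place where the argument departs from the control-independent proof, is disposing of the control term $D_3\tilde{\mathcal{L}}_{d,k}^{\mathcal{E},e}[\zeta_u]$. Here I would use that we evaluate along a solution $(y_d,u_d)$ of \eqref{exactboundaries}: by \eqref{eq:exact_control_variations} the interior minimisation conditions \eqref{uExactOptimality}, together with their boundary counterparts \eqref{exactboundary7} and \eqref{exactboundary8} (which collapse to the same statement once Hypothesis \ref{itm:H2p} forces $\nu-\lambda_0=0$ and $D_2\phi+\lambda_N^\top=0$), state that $\int_0^h D_3\tilde{\mathcal{L}}^{\mathcal{E}}\,\delta u\,dt=0$ for every control variation $\delta u$ on each interval. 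By the fundamental lemma of the calculus of variations this is equivalent to the pointwise continuous minimisation condition \eqref{eulerLagrangeu3}, i.e. $D_3\tilde{\mathcal{L}}^{\mathcal{E}}(y_{k,k+1}(t),\dot{y}_{k,k+1}(t),u_{k,k+1}(t))=0$ for all $t\in[0,h]$. Consequently $D_3\tilde{\mathcal{L}}_{d,k}^{\mathcal{E},e}[\zeta_u]=\int_0^h D_3\tilde{\mathcal{L}}^{\mathcal{E}}\,\zeta_u\,dt=0$, whatever generator $\zeta_u$ the symmetry happens to produce. This is precisely why the statement is restricted to optimal (solution) curves: for an arbitrary control this term would survive and $I_d$ would fail to be conserved.

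With the control term gone, I would rewrite the two surviving terms through the fibre-derivative identities of Proposition \ref{prp:UDiscreteFibreDer}, namely $p_{y,k}^-=-D_1\tilde{\mathcal{L}}_{d,k}^{\mathcal{E},e}$ and $p_{y,k+1}^+=D_2\tilde{\mathcal{L}}_{d,k}^{\mathcal{E},e}$, obtaining the single-interval balance
\begin{equation*}
    (p_{y,k+1}^+)^\top\zeta(y_{k+1}) = (p_{y,k}^-)^\top\zeta(y_k).
\end{equation*}
Finally, along the same solution the discrete Euler-Lagrange equations \eqref{yexactOptimality} provide momentum matching $p_{y,k}^+=p_{y,k}^-\equiv p_{y,k}$ at every interior node, so both sides above are exactly the well-defined integral \eqref{eq:Noether_integral} evaluated at nodes $k+1$ and $k$. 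Hence $I_d$ takes the same value at consecutive nodes and is conserved, as claimed. I expect the only subtleties beyond bookkeeping to be checking that $\zeta_u$ is an admissible variation in \eqref{uExactOptimality} and the uniform treatment of the two boundary intervals; both are absorbed by passing to the pointwise form of the minimisation condition as above.
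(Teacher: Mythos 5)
Your proposal is correct and follows essentially the same route as the paper's proof: differentiate the assumed invariance at $s=0$, eliminate the control term $D_3\tilde{\mathcal{L}}_{d,k}^{\mathcal{E},e}[\zeta_u]$ via the minimisation condition \eqref{eulerLagrangeu3} holding along solution curves, and identify the two surviving terms with $-(p_{y,k}^-)^\top\zeta(y_k)+(p_{y,k+1}^+)^\top\zeta(y_{k+1})$ through Proposition \ref{prp:UDiscreteFibreDer}, concluding by momentum matching. Your explicit passage through the fundamental lemma and your separate treatment of the boundary intervals via \eqref{exactboundary7}, \eqref{exactboundary8} and Hypothesis \ref{itm:H2p} merely spell out details the paper leaves implicit, and do so correctly.
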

    \begin{proof}
        From the invariance of the Lagrangian $\newL_d^{\mathcal{E},e}$ under the 
        symmetry action \eqref{controlledLiftedAction}, it follows that
        \begin{equation*}
            \left.\frac{d}{ds} \newL_d^{\mathcal{E},e}(\tilde{\Phi}_{g_s}(y_k),\tilde{\Phi}_{g_s}(y_{k+1}),\tilde{u}_{k,k+1})  \right|_{s=0} =\left.\frac{d}{ds} \newL_d^{\mathcal{E},e}(y_k,y_{k+1},u_{k,k+1})  \right|_{s=0} = 0\,.
        \end{equation*}
        Furthermore,
        \begin{align*}
            &\left.\frac{d}{ds} \newL_d^{\mathcal{E},e}(\tilde{\Phi}_{g_s}(y_k),\tilde{\Phi}_{g_s}(y_{k+1}),\tilde{u}_{k,k+1})  \right|_{s=0} = D_1 \newL_{d,k}^{\mathcal{E},e} \left.\frac{d}{ds} \tilde{\Phi}_{g_s}(y_k)\right|_{s=0} + D_2 \newL_{d,k}^{\mathcal{E},e} \left.\frac{d}{ds} \tilde{\Phi}_{g_s}(y_{k+1})\right|_{s=0} \\
            &\hspace{3.1cm}+ \int_{0}^{h} D_3\newL^{\mathcal{E}}(y_{k,k+1},\dot{y}_{k,k+1}, u_{k,k+1}(t)) \frac{d}{ds}\Psi_{g_s}(\pi^\mathcal{E}(u_{k,k+1}(t)))  u_{k,k+1}(t) \left.\vphantom{\frac{d}{d}}\right\vert_{s=0} \!\!dt\\
            &\hspace{3.1cm}= -(p_k^{-} )^\top \left.\frac{d}{ds} \tilde{\Phi}_{g_s}(y_k)\right|_{s=0} + (p_{k+1}^{+})^\top \left.\frac{d}{ds} \tilde{\Phi}_{g_s}(y_{k+1})\right|_{s=0}\,,
        \end{align*}
        where we have made use of the fact that $D_3 \newL^\mathcal{E}$ is just the optimality condition \eqref{eulerLagrangeu3} and thus solution curves $y_d,u_d$ lead to the vanishing of these contributions. From the last line the result of the theorem follows. 
    \end{proof}
    
    As in the control-independent case, it is immediate to show that if $\tilde{\mathcal{L}}^\mathcal{E}$ is invariant, then so is $\tilde{\mathcal{L}}_d^{\mathcal{E},e}$.

    When the exact semi-discrete Lagrangian is approximated, the functional dependence on $u_{k,k+1}$ is replaced by a set of discrete values. In our case, the symmetry action for the family of low-order integration schemes is defined by
    \begin{subequations}
    \begin{align}
        \tilde{U}_k^{(1)} &=  \Psi_{g_s}(\bar{q}_{k}^{\beta})\, U_k^{(1)}\\
        \tilde{U}_k^{(2)} &=  \Psi_{g_s}(\bar{q}_{k}^{(1-\beta)})\, U_k^{(2)},
    \end{align}
    \label{USymmetryactions}
    \end{subequations}
    where $\bar{q}_{k}^{\beta} = \pi_{\mathcal{Q}}(\bar{y}_{k}^{\beta})$.  This defines the fully discrete control-dependent symmetry action $\Phi_{g_s}^{T^*\mathcal{Q}\times T^*\mathcal{Q} \times \mathcal{N}\times \mathcal{N} }$.\\

    We say that the approximate Lagrangian is invariant under this symmetry action if
    \begin{subequations}
    \begin{align}
        \left(\newL_d^\mathcal{E}\circ \Phi_{g_s}^{T^*\mathcal{Q}\times T^*\mathcal{Q} \times \mathcal{N}\times \mathcal{N}}\right)(y_k, y_{k+1},U_k^{(1)},U_k^{(2)}) &= \newL_d^\mathcal{E}(y_k, y_{k+1},U_k^{(1)},U_k^{(2)}).
    \end{align}
    \label{approxInvariance}
    \end{subequations}
      \begin{theorem}
    \label{approxnoetherpropo}
        If $\newL_d^\mathcal{E}$ is invariant with respect to the action $ \Phi_{g_s}^{T^*\mathcal{Q}\times T^*\mathcal{Q} \times \mathcal{N}\times \mathcal{N} }$ of a one-parameter group of transformations $g_s \in \mathcal{G}$, then the first integral \eqref{eq:Noether_integral},
        where $p_{y,k}^{+}, p_{y,k}^{-}$ are the momenta of the corresponding discrete Lagrangian $\newL_d^\mathcal{E}$,
        is conserved along
        solutions $(y_d, U_d^{(1)},U_d^{(2)})$. 
    \end{theorem}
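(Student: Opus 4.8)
The plan is to mirror the proof of the exact semi-discrete Noether's theorem, Theorem \ref{discretefirstIntegral}, replacing the functional control argument by the two finite-dimensional control nodes $U_k^{(1)}, U_k^{(2)}$. First I would differentiate the invariance identity \eqref{approxInvariance} with respect to the group parameter $s$ at $s=0$. Since the right-hand side is $s$-independent, the chain rule applied to the left-hand side yields, on each interval $k$,
\begin{align*}
    0 = D_1 \newL_{d,k}^{\mathcal{E}} \left.\frac{d}{ds}\right|_{s=0}\!\!\tilde{\Phi}_{g_s}(y_k) &+ D_2 \newL_{d,k}^{\mathcal{E}} \left.\frac{d}{ds}\right|_{s=0}\!\!\tilde{\Phi}_{g_s}(y_{k+1})\\
    &+ D_3 \newL_{d,k}^{\mathcal{E}}\left.\frac{d}{ds}\right|_{s=0}\!\!\tilde{U}_k^{(1)} + D_4 \newL_{d,k}^{\mathcal{E}}\left.\frac{d}{ds}\right|_{s=0}\!\!\tilde{U}_k^{(2)},
\end{align*}
where $D_3, D_4$ denote the derivatives with respect to $U_k^{(1)}, U_k^{(2)}$ and the infinitesimal control generators are read off from \eqref{USymmetryactions}.

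The key observation is that the two control terms are exactly the left-hand sides of the discrete minimisation conditions \eqref{appDEL3} and \eqref{appDEL4}, up to the scalar factors $h\alpha$ and $h(1-\alpha)$: indeed $D_3 \newL_{d,k}^{\mathcal{E}} = h\alpha\,({\rho_k^\gamma}^\top\overline{\lambda}_k^\gamma - \mathrm{g}_k^\gamma U_k^{(1)})$ and analogously for $D_4$. These vanish along any solution $(y_d, U_d^{(1)}, U_d^{(2)})$ of \eqref{apUopts}, playing exactly the role that the vanishing of $D_3\newL^{\mathcal{E}}$ (the continuous minimisation condition \eqref{eulerLagrangeu3}) played in Theorem \ref{discretefirstIntegral}. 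Using the identifications $D_1\newL_{d,k}^{\mathcal{E}} = -p_{y,k}^-$ and $D_2\newL_{d,k}^{\mathcal{E}} = p_{y,k+1}^+$ from \eqref{eq:discreteMomentadef}, the displayed identity collapses to the per-interval balance $(p_{y,k}^-)^\top \xi(y_k) = (p_{y,k+1}^+)^\top \xi(y_{k+1})$, writing $\xi(y) = \left.\frac{d}{ds}\right|_{s=0}\tilde{\Phi}_{g_s}(y)$. Finally I would invoke the discrete Euler–Lagrange equations \eqref{appDEL1}--\eqref{appDEL2}, i.e. the momentum matching $p_{y,k}^- = p_{y,k}^+$ at the interior nodes $k=1,\dots,N-1$, to chain these balances together: $(p_{y,k}^-)^\top\xi(y_k) = (p_{y,k+1}^+)^\top\xi(y_{k+1}) = (p_{y,k+1}^-)^\top\xi(y_{k+1})$, so that $I_d = (p_{y,k})^\top \xi(y_k)$ is independent of $k$ and hence conserved.

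The main obstacle is the treatment of the boundary intervals $k=0$ and $k=N-1$, where the balance must also hold. There the variations $\delta U_0^{(i)}$ and $\delta U_{N-1}^{(i)}$ do not directly produce the plain conditions \eqref{appDEL3}--\eqref{appDEL4}, because $U_0^{(i)}$ and $U_{N-1}^{(i)}$ also enter the boundary velocities $v_0^{-}$ and $v_N^{+}$ through \eqref{apVU_all}, leaving a surviving prefactor $(\nu-\lambda_0)$, respectively $(\lambda_N^\top + D_2\phi)$. Here Hypothesis \ref{itm:H2p}, equivalently the boundary relations \eqref{apUopt6}--\eqref{apUopt7}, is precisely what annihilates these prefactors and reduces the boundary control conditions to the interior minimisation conditions, so that the control terms vanish on \emph{every} interval and the chaining argument runs uniformly. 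I would also remark, as in the discussion preceding this theorem, that invariance of the continuous $\newL^{\mathcal{E}}$ under $\hat{\tilde{\Phi}}^{\mathcal{E}}$ does not automatically descend to its approximation $\newL_d^{\mathcal{E}}$; the invariance of $\newL_d^{\mathcal{E}}$ under \eqref{USymmetryactions} is therefore a genuine hypothesis, which one must verify for the affine symmetry actions of interest.
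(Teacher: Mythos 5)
Your proposal is correct and follows essentially the same route as the paper's proof: differentiate the invariance identity at $s=0$, use the discrete minimisation conditions \eqref{appDEL3}--\eqref{appDEL4} to annihilate the $D_3$, $D_4$ terms along solutions, and identify the surviving terms with the discrete momenta via \eqref{eq:discreteMomentadef}. Your two elaborations --- chaining the per-interval balances through the momentum-matching form of the discrete Euler--Lagrange equations, and observing that Hypothesis \ref{itm:H2p} annihilates the boundary prefactors $(\nu-\lambda_0)$ and $(\lambda_N^\top + D_2\phi)$ so that the minimisation conditions hold on every interval including $k=0$ and $k=N-1$ --- are steps the paper leaves implicit, and both are correct.
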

    \begin{proof}
        Variation via the symmetry action in the invariant case directly leads to
        \begin{align*}
            0&=\left.\frac{d}{ds}\newL_d^\mathcal{E}(\tilde{\Phi}_{g_s}(y_k), \tilde{\Phi}_{g_s}(y_{k+1}),\tilde{U}_k^{(1)},\tilde{U}_k^{(2)})\right|_{s=0} \\
            &=D_1 \newL_{d,k}^\mathcal{E}  \left.\frac{d}{ds}\tilde{\Phi}_{g_s}(y_k)\right|_{s=0} + D_2 \newL_{d,k}^\mathcal{E}  \left.\frac{d}{ds}\tilde{\Phi}_{g_s}(y_{k+1})\right|_{s=0} + D_3 \newL_{d,k}^\mathcal{E}  \left.\frac{d}{ds}\tilde{U}_k^{(1)}\right|_{s=0} +  D_4 \newL_{d,k}^\mathcal{E}  \left.\frac{d}{ds}\tilde{U}_k^{(2)}\right|_{s=0}\\
            &=-(p_k^{-})^\top \left.\frac{d}{ds}\tilde{\Phi}_{g_s}(y_k)\right|_{s=0} + (p_{k+1}^{+})^\top \left.\frac{d}{ds}\tilde{\Phi}_{g_s}(y_{k+1})\right|_{s=0} ,
        \end{align*}
        where going from the second to the third line we used the fact that for optimal  $(y_d,U_d^{(1)},U_d^{(2)})$ the minimisation conditions give  $D_3 \newL_{d,k}^{\mathcal{E}}=D_4 \newL_{d,k}^{\mathcal{E}}=0$,
        from which the result follows directly. 
    \end{proof}

    Furthermore, the invariance of the continuous Lagrangians $\newL,\newL^\mathcal{E}$ subject to linear symmetry actions is preserved by the approximate Lagrangians $\newL_d,\newL^\mathcal{E}_d$  using the family of low-order integration schemes considered in this work as is shown in the following

    \begin{theorem}
        Let $(\mathcal{E},\pi^{\mathcal{E}},\mathcal{Q},\rho)$ be a $\mathcal{G}_1$-equivariant anchored vector bundle with $\mathcal{Q} = \mathbb{R}^n$ and $\mathcal{G}_1$ 
        a one-parameter group of affine transformations acting on $\mathcal{Q}$ by $\Phi_{g_s}(q) = A_s q + c_s$, with $A_s \in \mathbb{R}^{n \times n}, c_s \in \mathbb{R}^{n}$, $s \in \mathbb{R}$. Further, in the control-dependent case, assume that either
        \begin{enumerate}
            \item $\Psi_{g_s}(q) = E_s$, with $E_s \in \mathbb{R}^{m\times m}$,  or
            \item $\beta = \gamma$.
        \end{enumerate}
        Then, if the continuous Lagrangians $\newL,\newL^\mathcal{E}$ are invariant under the 
        corresponding lifted symmetry actions, then the approximate discrete Lagrangians $\newL_d,\newL_d^{\mathcal{E}}$ 
        obtained via the family of low-order integration schemes \eqref{approxLagrangian}, \eqref{approxLdefU} will preserve this invariance. Consequently, the first integral
        \begin{equation}
            I_d = p_q^{\top} (B q + d) - p_{\lambda}^{\top} B^{\top} \lambda, \quad \text{with } B = \left.\frac{\partial A_{s}}{\partial s}\right|_{s=0}, d = \left.\frac{\partial c_{s}}{\partial s}\right|_{s=0}
            \label{firstInt_linearaction}
        \end{equation}
        is conserved along the corresponding approximate discrete flows.
        \label{theoSym}
    \end{theorem}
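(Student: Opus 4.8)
The plan is to prove the stated invariance of the approximate discrete Lagrangians first, and then simply invoke the discrete Noether theorems already available to read off the conserved quantity. The starting observation is that for an affine action $\Phi_{g_s}(q) = A_s q + c_s$ one has $D\Phi_{g_s} = A_s$ (constant in $q$) and $D^2\Phi_{g_s} = 0$. Consequently the cotangent and tangent lifts collapse to the linear expressions $\tilde{\Phi}_{g_s}(q,\lambda) = (A_s q + c_s,\, A_s^{-\top}\lambda)$ and $\hat{\tilde{\Phi}}_{g_s}(q,\lambda,v,v_\lambda) = (A_s q + c_s,\, A_s^{-\top}\lambda,\, A_s v,\, A_s^{-\top} v_\lambda)$, the correction term $(D^2\Phi_{g_s^{-1}} v)^\top \lambda$ in $\hat{\tilde{\Phi}}$ vanishing precisely because the action is affine.

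Next I would translate the continuous invariance hypothesis into pointwise equivariance relations. Requiring $\tilde{\mathcal{L}}\circ\hat{\tilde{\Phi}}_{g_s} = \tilde{\mathcal{L}}$ and matching the terms of degree $0$, $1$ and $2$ in $\lambda$ yields $f(\Phi_{g_s}(q),A_s v) = A_s f(q,v)$ and $b(\Phi_{g_s}(q)) = A_s b(q) A_s^\top$, the term $v_\lambda^\top v$ being automatically invariant. In the control-dependent case, \eqref{eq:equivariance} and \eqref{eq:cost.invariance} give in addition $\rho(\Phi_{g_s}(q))\Psi_{g_s}(q) = A_s \rho(q)$ and $\Psi_{g_s}(q)^\top \mathrm{g}(\Phi_{g_s}(q))\Psi_{g_s}(q) = \mathrm{g}(q)$. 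The crucial point, and the reason affineness is indispensable, is that the interpolation nodes used by the low-order schemes transform compatibly: since $\bar q_k^\gamma = \gamma q_k + (1-\gamma)q_{k+1}$ is an affine combination, $\bar q_k^\gamma \mapsto \Phi_{g_s}(\bar q_k^\gamma)$, while the finite differences lose the translation, $\Delta q_k \mapsto A_s \Delta q_k$ and $\Delta\lambda_k \mapsto A_s^{-\top}\Delta\lambda_k$, and $\bar\lambda_k^\gamma \mapsto A_s^{-\top}\bar\lambda_k^\gamma$. For a nonlinear action none of these identities would hold, so the discretisation would not inherit the symmetry.

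With these transformation rules I would verify invariance of $\tilde{\mathcal{L}}_d,\tilde{\mathcal{L}}_d^{\mathcal{E}}$ term by term on the building blocks $\tilde{\mathcal{L}}_k^\gamma, \tilde{\mathcal{L}}_k^{\mathcal{E},\gamma}$ in \eqref{approxLagrangian},\eqref{approxLdefU}. Indeed $\Delta\lambda_k^\top\Delta q_k \mapsto \Delta\lambda_k^\top A_s^{-1}A_s\Delta q_k = \Delta\lambda_k^\top\Delta q_k$; the term $(\bar\lambda_k^\gamma)^\top f_k^\gamma$ is invariant by the equivariance of $f$ evaluated at $\bar q_k^\gamma$; and $\tfrac12(\bar\lambda_k^\gamma)^\top b_k^\gamma\bar\lambda_k^\gamma$ by that of $b$. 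In the control-dependent case the coupling and cost terms $(\bar\lambda_k^\gamma)^\top\rho_k^\gamma U_k^{(1)}$ and $\tfrac12 (U_k^{(1)})^\top \mathrm{g}_k^\gamma U_k^{(1)}$ are handled with the $\rho$- and $\mathrm{g}$-equivariance, but only after the transformed control $\tilde U_k^{(1)} = \Psi_{g_s}(\bar q_k^\beta)U_k^{(1)}$ is matched to the evaluation point $\bar q_k^\gamma$ of $\rho$ and $\mathrm{g}$. This matching is exactly what the two hypotheses guarantee: either $\Psi_{g_s}\equiv E_s$ is constant so the evaluation point is irrelevant, or $\beta=\gamma$ so that $\Psi_{g_s}$ is sampled at $\bar q_k^\gamma$ (and analogously at $\bar q_k^{1-\gamma}$ for the $(1-\gamma)$ block). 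This is the one place where real care is needed and where I expect the only genuine subtlety to lie.

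Finally, having established the invariance \eqref{discreteInvariance} (resp. \eqref{approxInvariance}), the conserved quantity follows by applying the discrete Noether theorem of \cite{marsden2001discrete} in the control-independent case and Theorem \ref{approxnoetherpropo} in the control-dependent case, so that \eqref{eq:Noether_integral} holds. It then remains to compute the infinitesimal generator: using $A_0 = I$, $c_0 = 0$ one gets $\tfrac{d}{ds}\big|_{0}(A_s q + c_s) = Bq + d$ and, from $\tfrac{d}{ds}\big|_0 A_s^{-1} = -B$, that $\tfrac{d}{ds}\big|_0 A_s^{-\top}\lambda = -B^\top\lambda$. Pairing with $p_{y,k} = (p_{q,k},p_{\lambda,k})$ from \eqref{eq:discreteMomenta} then gives $I_d = p_q^\top(Bq+d) - p_\lambda^\top B^\top\lambda$, which is exactly \eqref{firstInt_linearaction}, completing the argument.
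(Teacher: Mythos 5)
Your proposal is correct and follows essentially the same route as the paper's proof: both hinge on the observation that an affine action commutes with the affine combinations $\overline{y}_k^\gamma$ and that the translation part cancels in the finite differences $\Delta y_k$, both use the hypotheses ($\Psi_{g_s}(q)=E_s$ or $\beta=\gamma$) precisely to match the control transformation point $\overline{q}_k^{\beta}$ with the Lagrangian's evaluation point $\overline{q}_k^{\gamma}$, and both then conclude via Theorem \ref{approxnoetherpropo} (resp.\ the discrete Noether theorem of \cite{marsden2001discrete}) together with the generator computation using $\left.\tfrac{d}{ds} A_s^{-1}\right|_{s=0} = -B$. The only cosmetic difference is that you unpack the continuous invariance into componentwise equivariance relations for $f$, $b$, $\rho$, $\mathrm{g}$ by matching powers of $\lambda$, whereas the paper argues more directly that invariance of $\tilde{\mathcal{L}}^{\mathcal{E}}$ at arbitrary points gives invariance at the transformed discrete evaluation points.
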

    \begin{proof}
    We focus on the control-dependent case, since the proof for the control-independent case follows similarly, without the complications due to the controls. The lifted action and its discrete counterpart are    
    \begin{align*}    \hat{\tilde{\Phi}}_{g_s}^{\mathcal{E}}(q, \lambda, v_q, v_{\lambda}, u) &=
    \left(A_s q + c_s, A_s^{-\top} \lambda, A_s v_{q}, A_s^{-\top} v_{\lambda}, \Psi_g(q) u\right).\\
        \Phi_{g_s}^{T^*\mathcal{Q} \times T^*\mathcal{Q}\times \mathcal{N}\times\mathcal{N}}(q_k,\lambda_k,q_{k+1},\lambda_{k+1},U_k^{(1)}, U_k^{(2)}) &= (A_s q_k+c_s, A_s^{-\top} \lambda_k,A_s q_{k+1}+c_s, A_s^{-\top} \lambda_{k+1},\tilde{U}_k^{(1)}, \tilde{U}_k^{(2)})
    \end{align*}
    where $A^{-\top} = (A^{-1})^{\top} = (A^{\top})^{-1}$ and 
    $\tilde{U_k}^{(1)}, \tilde{U_k}^{(2)}$ defined via \eqref{USymmetryactions}
    which leads to
    \begin{align}
        &(\newL_{d,k}^\mathcal{E}\circ\Phi_{g_s}^{T^*\mathcal{Q} \times T^*\mathcal{Q}\times \mathcal{N}\times\mathcal{N}})(y_k,y_{k+1},U_k^{(1)},U_k^{(2)}) = \alpha \newL^\mathcal{E}(A_s \overline{q}_k^\gamma+ c_s, A_s^{-\top}\overline{\lambda}_k^\gamma, A_s \Delta q_k, A_s^{-\top}\Delta \lambda_k, \tilde{U}_k^{(1)} ) \\
        &\hspace{5.3cm}+ (1-\alpha) \newL^\mathcal{E}(A_s \overline{q}_k^{(1-\gamma)}+c_s, A_s^{-\top}\overline{\lambda}_k^{(1-\gamma)}, A_s \Delta q_k, A_s^{-\top}\Delta \lambda_k,\tilde{U}_k^{(2)} ).\nonumber
    \end{align}
    Under the assumption of the theorem, the Lagrangian $\tilde{\mathcal{L}}^\mathcal{E} $ is invariant under the action $\hat{\tilde{\Phi}}_{g_s}^{\mathcal{E}}$,
    which holds true for evaluations at arbitrary $(q,\lambda,v_q,v_{\lambda},u)$ and thus also for the evaluations of $\newL^\mathcal{E}$ at the points $( \overline{q}_k^\gamma, \overline{\lambda}_k^\gamma,  \Delta q_k, \Delta \lambda_k, U_k^{(1)} )$ and $(\overline{q}_k^{(1-\gamma)}, \overline{\lambda}_k^{(1-\gamma)},  \Delta q_k, \Delta \lambda_k,{U}_k^{(2)} )$, when $\beta=\gamma$ or when $\Psi_{g_s}(q)$ is independent of $q$, i.e. $\Psi_{g_s}(q) = E_s$. 
    This results in the invariance of $\newL_{d,k}^\mathcal{E}$ under $\Phi_{g_s}^{T^*\mathcal{Q} \times T^*\mathcal{Q}\times \mathcal{N}\times\mathcal{N}}$. 
    Following Theorem \ref{approxnoetherpropo}, invariance of $\newL_d^\mathcal{E}$ leads to conservation of a first integral which in 
     this case is exactly \eqref{firstInt_linearaction} due to the fact that
     \begin{equation*}
         \left. \frac{d}{ds} \tilde{\Phi}_{g_s}(y)\right\vert_{s=0}= \left( \left. \frac{d}{ds} (A_s q + c_s) \right\vert_{s=0}, \left. \frac{d}{ds} A_s^{-\top} \lambda\right\vert_{s=0}\right)
     \end{equation*}
     together with the identity
     \begin{equation*}
         \left. \frac{d}{ds} A_s^{-1} \right\vert_{s=0} = -\left. A_s^{-1} \left( \frac{d}{ds} A_s \right) A_s^{-1} \right\vert_{s=0}\,.\qedhere
     \end{equation*}
    
    \end{proof}

    \begin{remark}
        Here we observe once more, as previously mentioned in Remark \ref{sameapproxremark}, that $\beta = \gamma$ is the natural geometric choice since when performing evaluations of the Lagrangian, it is expected that $(q,u) \in \mathcal{E}$. Any different choice of $\beta$ implies that $u$ is a vector whose base is $q' \neq q$, leading to improper transformations unless $\Psi_{g_s}$ is independent of the point.
    \end{remark}
    
 The integration scheme considered in this work is the family of low-order integration schemes \eqref{approxLagrangian}, \eqref{approxLdefU} for $\newL_d^e,\newL_d^{\mathcal{E},e}$.
However, by virtue of $\newL$ being an actual Lagrangian on $T^*\mathcal{Q}$
, it is possible to infer a number of general symmmetry properties  for a variety of integration schemes. 
\begin{remark}
First, Runge-Kutta-based approximations of 
discrete Lagrangians lead automatically to variationally partitioned Runge-Kutta methods \cite{marsden2001discrete} in 
$T^* (T^*\mathcal{Q})$, which preserve quadratic first integrals of the form $Q(y,p_y) = p_y D y$  \cite{marsden2001discrete,suris1990,hairergeometric} where $D$ is a 
matrix of appropriate dimensions. In the case of linear symmetry actions of the form $\left.\frac{d}{ds}\right|_{s=0} \tilde{\Phi}_{g_s}(y_k) = B y_k$, e.g. rotational symmetries, the conserved first integral is then of the form $I_d=p_k B y_k$, a special form of the conserved first integrals \eqref{firstInt_linearaction}.

Second, given $\newL$ invariant under the lifted action $\hat{\tilde{\Phi}}_g$, a Galerkin-based approximation 
whose interpolation polynomials are equivariant w.r.t.~this action lead to 
invariant discrete Lagrangians 
\cite{ober2015construction} and the first integral \eqref{eq:Noether_integral} will be consequently preserved. Since 
any interpolation polynomial is equivariant w.r.t. affine symmetry actions, it follows \cite{ober2015construction} that approximating the exact discrete Lagrangian $\newL_d^e$ via Galerkin methods will preserve 
\eqref{firstInt_linearaction} too.
\end{remark}

    \subsection{Symplecticity in the optimal control space}
    An advantage of deriving optimality conditions variationally from the discrete Lagrangians is that, by construction, they result in symplectic methods, regardless of the integration scheme chosen \cite{marsden2001discrete}. Consequently, optimality conditions \eqref{approxnouboundaries} of $\newL_d$ form 
    naturally symplectic discretisations 
    on $T^*(T^*\mathcal{Q})$ of the continuous optimality conditions for $\newL$. Symplectic methods result in approximate numerical solutions that have good qualitative behavior. First, the corresponding symplectic form is conserved which leads to generally bounded energy errors. In the context of new control Lagrangians $\tilde{\mathcal{L}},\tilde{\mathcal{L}}^\mathcal{E}$ this means that
    Pontryagin's control Hamiltonian
    \begin{equation}
        \label{eq:PonytryaginsHamiltonian}
        \mathcal{H}(q,v,\lambda_q,\lambda_v,u^*) = \lambda_q^{\top} v + \lambda_v^{\top} \left( f(q,\dot{q}) + \rho(q) \,u^* \right) - \frac{1}{2} \, {u^*}^{\top}\mathrm{g}(q)\, u^*,
    \end{equation}
    where $(\lambda_q,\lambda_v)$ were introduced in Remark \ref{rmk:OCP_first_order}, as well as the new control Hamiltonian \cite{leyendecker2024new}
    \begin{subequations}
    \begin{align}
        \tilde{\mathcal{H}}(y,p_y) = \tilde{\mathcal{H}}^\mathcal{E}(y,p_y,u^*) &= \frac{\partial \tilde{\mathcal{L}}^\mathcal{E}}{\partial \dot y} p_y - \tilde{\mathcal{L}}^\mathcal{E}(y,\dot y,u^*) = p_q^{\top} p_\lambda -\lambda^\top (f(q,p_\lambda) + \rho(q) u^*) + \frac{1}{2}\mathrm{g}(q)(u^*,u^*)
        \end{align}
        \label{hamiltonDef}
    \end{subequations}
    for $u^*$ satisfying \eqref{eulerLagrangeu3}
    are approximately conserved, i.e. while they are not exactly constant, their error is bounded.
    
    Second, these lead to typically very good long-time behaviour of the solutions which may be useful for optimal control problems 
    over long periods of time, in particular in astronomy and molecular dynamics \cite{chyba2009role}.

    Third, Offen and McLachlan showed \cite{mclachlan2019symplectic} that symplectic methods applied to Hamiltonian boundary value problems preserve certain types of bifurcations that non-symplectic methods cannot. While the particular relevance of this fact for the optimal control problem is not yet understood, it still hints at the suitability of these methods.

    It is important to stress once more that here symplecticity of the discrete flow is meant with respect to the natural symplectic structure of the state-adjoint space $T^*(T^*\mathcal{Q})$ at the optimum, not necessarily of the state space of the forced mechanical system. The properties of the restriction of the resulting schemes to the state space will be explored
    in a future publication.


\section{Example -- Low thrust orbital transfer}
\label{examplesection}
In this section we apply the necessary optimality conditions derived for the family of low-order integration schemes \eqref{approxnouboundaries} in order to solve a specific example: the low-thrust orbital transfer of a satellite. The satellite orbits a planet from an orbit of radius $r$ to another orbit of radius $r'$ in the same plane, allowing a two-dimensional description of the state-space and thus $r = \sqrt{x(0)^2+y(0)^2}$,  $r' = \sqrt{x(T)^2+y(T)^2}$.

 \begin{figure}[h]
     \centering
\includegraphics[width=0.951\textwidth]
{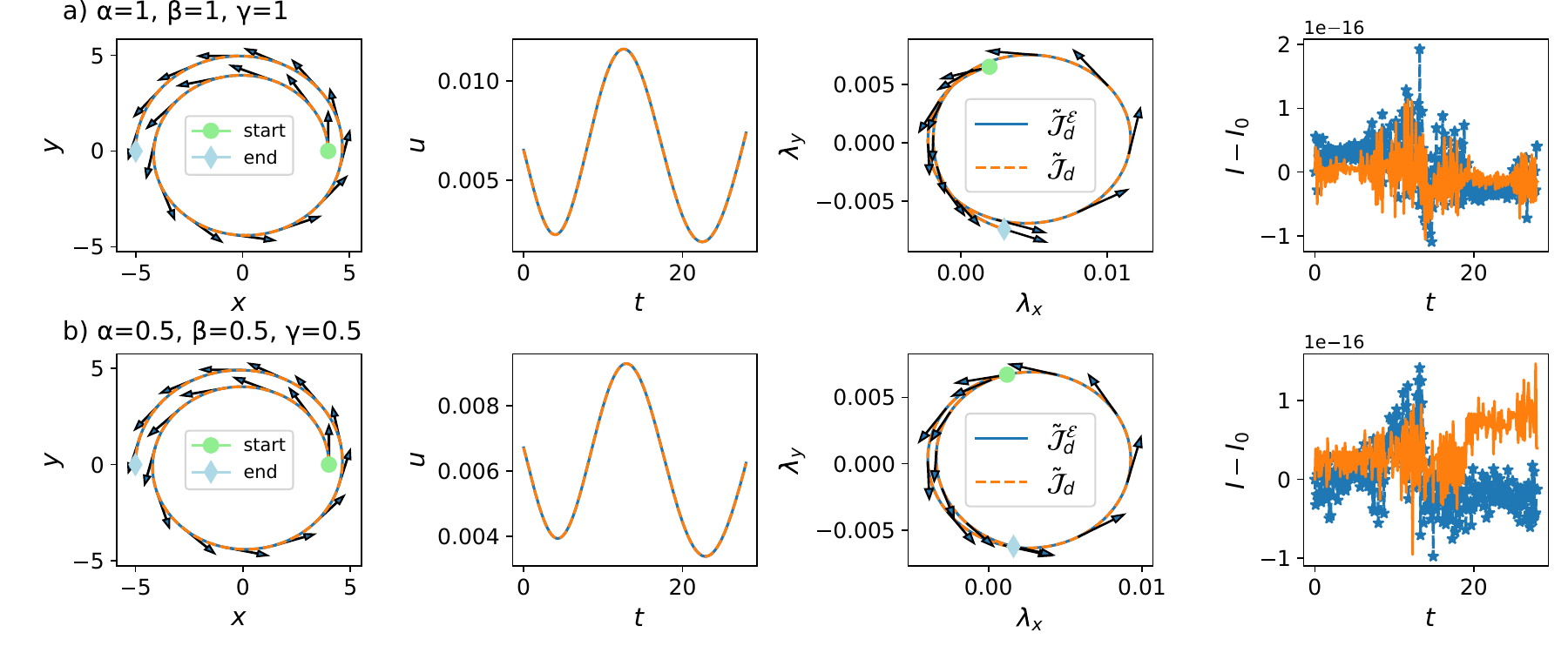}
\caption{Example solutions of a low-thrust orbital transfer from $q(0) = (4,0)$ to $q(T) = (-5,0)$ for $T = 28$ with $d=1.5$, $M=10, g = 1, h=0.1$, $K_q = K_v = \mathrm{Id}_{2 \times 2}$ with the parameters for the a) row being  $\alpha=1=\beta=\gamma$, bottom row b) $\alpha=0.5=\beta=\gamma$. Shown for each choice of the integration scheme are the control-dependent case $\tilde{\mathcal{L}}_d^\mathcal{E}$ (blue lines)  and the independent case $\tilde{\mathcal{L}}_d$ (orange lines). The rows contain from left to right: ($x,y$)-trajectory with example state-velocity vectors $v$, control $u$ evolution, ($\lambda_x,\lambda_y$)-trajectory with example costate-velocity vectors $v_\lambda$, error evolution of the conserved quantity $I-I(0)$.}
\label{fig1}
\end{figure}
The mechanical system is defined by the following Lagrangian in Cartesian coordinates
    \begin{equation}
        L(q,\dot q) = L(x,y,\dot x, \dot y) = \frac{1}{2}m(\dot x^2 + \dot y^2) + G \frac{Mm}{r},
        \label{orbitL}
    \end{equation}
    with the mass of the satellite $m$, the mass of the planet $M \gg m$, and the gravitational constant $G$. The force is assumed to act in the azimuthal direction, i.e.~$f_L = mru \mathrm{d} \varphi$. Therefore, the force-controlled Euler-Lagrange equations in Cartesian coordinates may be written as 
    \begin{align}
        \ddot q = 
        \begin{pmatrix}
            \ddot x\\
            \ddot y
        \end{pmatrix}=
        -\frac{GM}{r^3}\begin{pmatrix}
            x\\
            y
        \end{pmatrix} 
        + \frac{1}{r}\begin{pmatrix}
            -y\\
            x
        \end{pmatrix} u = f(q) + \rho(q) u,    \label{orbit2ODE}
    \end{align}
    the control space metric is obtained as $\mathrm{g}(q) = \rho(q)^T \mathrm{g}_\mathcal{Q}(q) \rho(q) =1$ where $\mathrm{g}_\mathcal{Q}(q) = \mathrm{Id}_{2 \times 2}$ is the Euclidean metric in $\mathbb{R}^2$. We assume a fixed time-horizon $T = d \sqrt{\frac{4\pi^2}{8\gamma M}(r^0 + r^T)^3}$, with $d$ the number of revolutions. The optimal control problem with initial position and velocity constraints may then be stated generically in the form
    \begin{equation}
        \begin{aligned}
        J(u) &= \phi(q(T),\dot q(T)) + \int_0^T \frac{1}{2} u(t)^T \mathrm{g}(q) u(t) dt\\
        \text{subject to:}&\\
        q(0) &=q^0= (x^0,0),\\
        \dot q(0) &=\dot{q}^0= (0,\sqrt{\gamma M/x^0})\\
         \ddot q &= f(q) + \rho(q) u,\\
        \end{aligned}
        \label{orbitOptiProb}
    \end{equation}

\begin{figure}[ht]
\centering
\includegraphics[width=0.6\textwidth]
{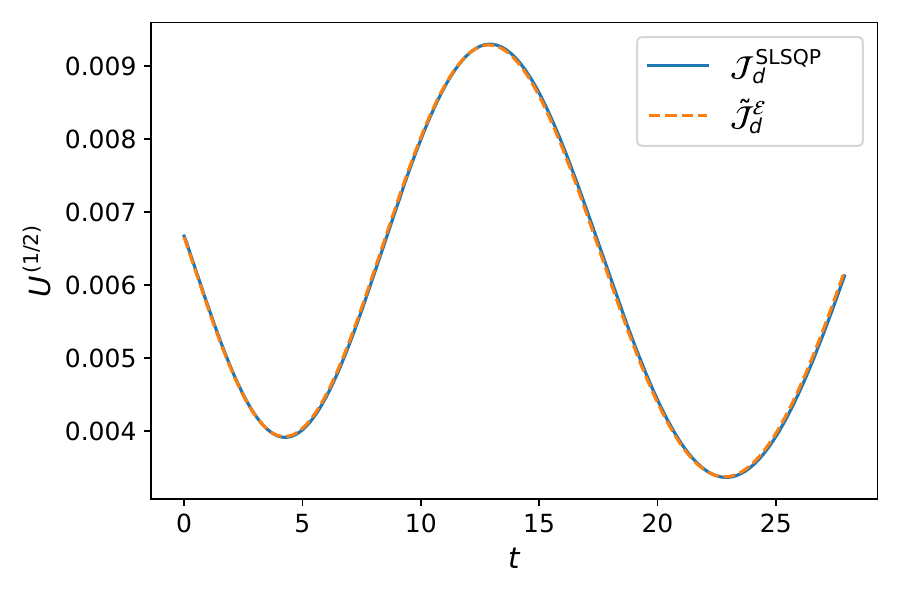}
\caption{Comparison of solving the optimal control  problem via the new approach \eqref{apUopts} and a standard solution algorithm.
 The parameters are the same as in Figure \ref{fig1}, for step size $h=0.1$ and midpoint evaluations $\alpha=\beta=\gamma=1/2$. The new approach yields the same optimal solution.}
\label{fig2}
\end{figure}
\begin{figure}[ht]
\centering
\includegraphics[width=1\textwidth]
{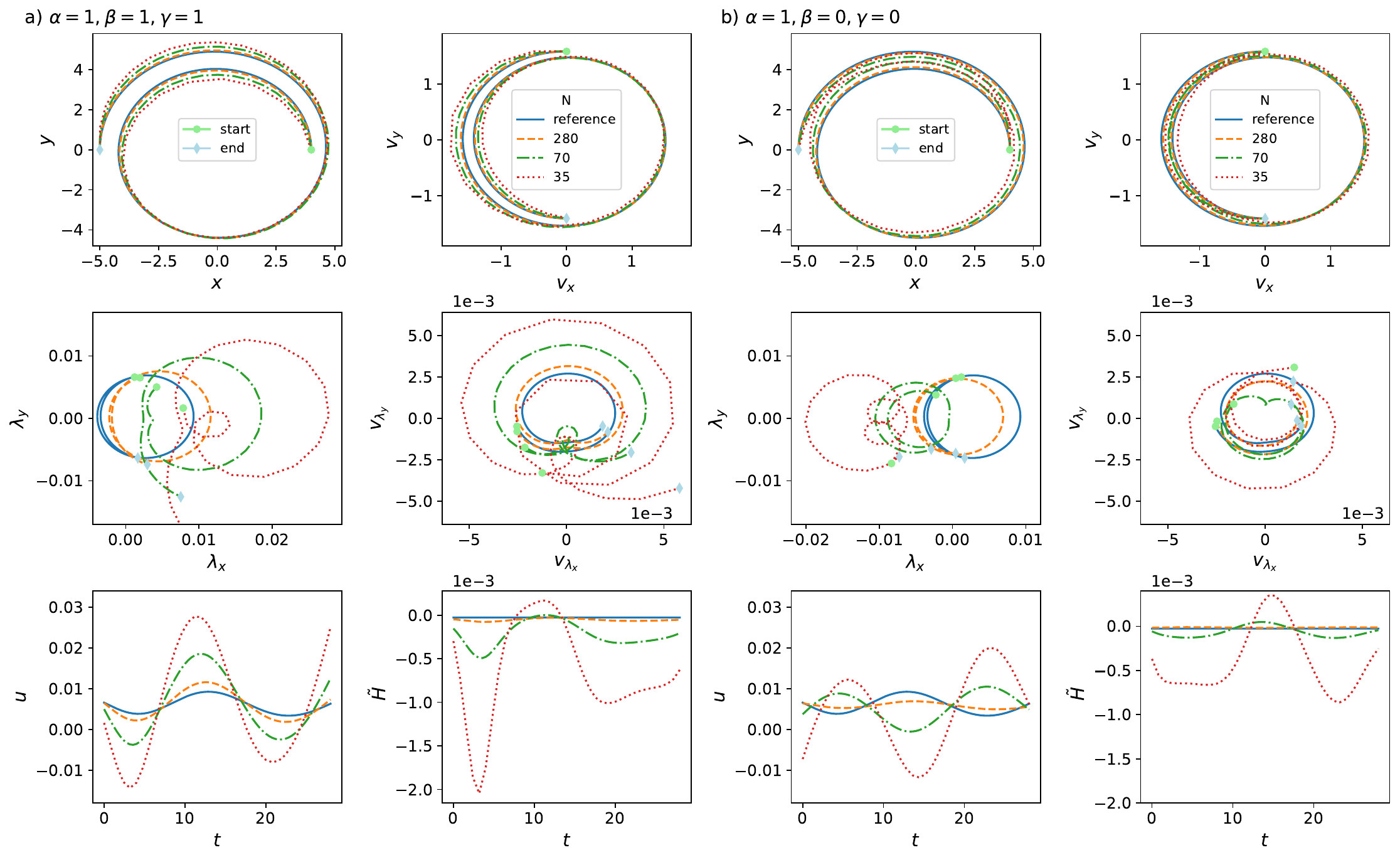}
\caption{Convergence of the method as a function of the number of steps of the discretisation, $N$. Here the first-order methods with a) $ \alpha=\beta=\gamma=1$ (left two columns) and b) $ \alpha=1,~ \beta=\gamma=0$ (right two columns) are considered with otherwise the same parameters as in Figure \ref{fig1}. The figures for a) and b) are the same, being in the first row the $(x,y)$-trajectory and  $(v_x,v_y)$-trajectory, in the second row the $(\lambda_x,\lambda_y)$-trajectory and $(v_{\lambda_x},v_{\lambda_y})$-trajectory. The third row contains the control $u$ evolution and the evolution of the control Hamiltonian  $\tilde{\mathcal{H}}$. The number of steps correspond to step-sizes of $h=[0.1,0.4,0.8]$. The reference trajectories correspond to the solution of the problem for $h=0.01$ with the second-order method $\alpha=\beta=\gamma = 0.5$. Both methods approach the reference solution for decreasing $h$ in all aspects.}
\label{fig3}
\end{figure}
     with terminal cost $\phi$ steering the system towards the final condition $(q^T, \dot{q}^T)$ 

    \begin{equation}
        \phi(q(T),\dot q(T)) = \left(q(T)-q^T\right)^\top K_q \left(q(T)-q^T\right) + \left(\dot q(T) -\dot q^T\right)^\top K_v \left(\dot q(T) -\dot q^T\right).
    \end{equation}
    $K_q$ and $K_v$ are positive-definite matrices acting as weights.
     This optimal control problem, following \cite{leyendecker2024new},  possesses the conserved quantity
 \begin{equation}
     I(q,\lambda,\dot q, \dot \lambda) = -\dot x \lambda_y + \dot y \lambda_x + x \dot{\lambda}_y - y \dot \lambda_x,
     \label{thrust_conserved}
 \end{equation}
    which is associated to the invariance with respect to planar rotations.
    
    Using the continuous Legendre transform \eqref{contLegendre}, it is possible to rewrite \eqref{thrust_conserved} in the form
    \begin{equation}
        I(q,\lambda,p_q,p_\lambda) = \lambda_x p_{\lambda_y} - \lambda_y p_{\lambda_x}   + x p_y  - y p_x .\label{thrust_cons_reformulated}
    \end{equation}

    The optimal control problem \eqref{orbitOptiProb} may now be solved by finding the solutions of the optimality conditions derived for the Lagrangians $ \tilde{\mathcal{L}}_d^\mathcal{E},\tilde{\mathcal{L}}_d$ with the low-order integration scheme \eqref{approxLdefU} leading to \eqref{apUopts} for $ \tilde{\mathcal{L}}_d^\mathcal{E}$ and \eqref{approxLagrangian}, leading to \eqref{approxnouboundaries}  for $ \tilde{\mathcal{L}}_d$, via e.g. root finding algorithms. 

    Figure \ref{fig1} shows example solutions for a choice of parameters of the  problem for two different integration scheme choices, a) semi-implicit Euler, b) midpoint and both the control-dependent $\newL_d^\mathcal{E}$ and independent $\newL_d$ descriptions. Note that the step size $h = 0.1$ is rather large, particularly for the Kepler system and the semi-implicit Euler, which is of order 1. However, the solution is already quite close to that of the midpoint rule, which is of order 2. The two cases show very similar behaviour in various features like its state and costate trajectories and corresponding velocities $v,v_\lambda$, calculated canonically via $v_k^ {\pm}, v_{\lambda,k}^ {\pm}$, optimal controls canonically calculated from the minimisation condition \eqref{eulerLagrangeu3}. As expected, the low-order family of integration schemes conserves the first integral  $I$ in \eqref{thrust_cons_reformulated},  corresponding to the rotational symmetry of the optimal control problem.

    In Section \ref{firstordersection} it is shown that our new Lagrangian method is equivalent to a first-order direct approach. Following Remark \ref{exampleequivalence}, this equivalence allows us to validate the solution of our new method with a more standard calculation method.
    The running cost \eqref{eq:discrete_running_cost} with  $\alpha=\beta=\gamma=1/2$ together with the same terminal cost $\phi$ as in \eqref{correctfirstorderobjective}, can be optimised as a function of the midpoint evaluated control $U^{(1)}_k =U^{(2)}_k= U^{(1/2)}_k$, subject to the initial position and velocity constraints as well as the midpoint discretised first-order dynamical constraints \eqref{1orderschemeinsert}. This minimisation is accomplished by using one of the standard minimisation solvers of the python SciPy library, \texttt{scipy.optimize.minimize} with the method \texttt{'SLSQP'}, solving for the optimal control curve $U^{(1/2)}_k$. Figure \ref{fig2} shows that the new approach indeed yields the same solution as this method, denoted as $\mathcal{J}_d^{\mathrm{SLSQP}}$, within tolerances.

In Figure \ref{fig3}, solutions of the new method for a) $\alpha=\beta=\gamma=1$, b) $\alpha=1,\beta=\gamma=0$, both different variants of semi-implicit Euler, are shown for various number of steps and compared to a finer solution computed with the midpoint rule as reference. Even for the chosen first-order method, the solutions are still relatively close to the others for the small number of steps $N=35$. Interestingly, the state trajectory differs less than the costate trajectories in dependence of the step-size. This may be partially explained by the fact that the costate equations have completely free boundaries, while the state equations have prescribed boundary values. 
The scaling of the controls with $N$ in Figure \ref{fig3} is probably due to the nature of the dynamics of the problem itself. In the case of Figure \ref{fig3}.a), since the position of the satellite initially undershoots as $N$ increases while the velocity overshoots, stronger controls need to be exerted to compensate for this, leading to another overshoot later on and further compensation. In the case of Figure \ref{fig3}.b) the situation is almost the reverse of a), which leads to the inversion of the controls for low $N$. The new control Hamiltonian $\tilde{\mathcal{H}}$, \eqref{hamiltonDef}, also shows the expected behaviour of not being preserved exactly. This holds true also for the Pontryagin's control Hamiltonian $\mathcal{H}$, \eqref{eq:PonytryaginsHamiltonian}, as they are equivalent here, up to a sign difference.

\section{Conclusion and future work}

 %
 In this work, the discretisation of the new Lagrangian approach to optimal control problems with quadratic cost functions and affine-control systems proposed in \cite{leyendecker2024new} was developed and analysed.

 In the process, we introduced an exact discrete and a novel exact semi-discrete version of the problem based on the theory of discrete mechanics. This approach provides a theoretical framework that lets us analyse some general features of the discrete problem and paves the way to approximations conducive to numerical methods.
 
 While we were able to show that the exact discrete setting leads indeed to equivalent necessary optimality conditions to the continuous ones, the resulting boundary terms 
 revealed interesting features. In particular, in the control-dependent case the initial and final costates, $\lambda_0$ and $\lambda_N$, turn out to be completely absent from the cost function 
 (see Remark \ref{rmk:discrete_velocity_lambda}). 
 However, in the control-independent formulation, this is not the case and it may be possible to recuperate their values naturally from its variations.
 
 We then proposed 
 a family of low-order integration schemes, which were used to approximate the corresponding exact Lagrangians.
 This family depends on 
 parameters $\alpha,\beta,\gamma$, and we were able to show that it leads to the same optimality conditions for the control-dependent and independent case for the same choice of parameters. Further, it was 
 possible to show that 
 equivalent direct methods can be constructed using discretisations of the first and second-order dynamic equations. Their construction, however, requires some forethought and judicious choices.

 Using the discrete version of Noether's theorem, we were able to derive conserved quantities of the exact discrete and semi-discrete optimal control problem. Moreover, using these results we were able to prove general conservation properties of the proposed low-order family.

 Lastly, the low-thrust orbital transfer was considered as a numerical example to show the validity of the derived optimality conditions as well as the preservation properties derived earlier.

In future works, we intend to further develop the exact discrete and semi-discrete versions of an OCP as showcased in this work. The ideas are fairly general and can be applied directly to formulate a discrete counterpart to the more general version of this new Lagrangian formulation in \cite{Konopik25b}. We also aim to obtain rigorous conditions under which the semi-discrete version exists. Also, while the new approach leads to variational integrators that are symplectic in the state-adjoint space $T^*(T^*\mathcal{Q})$, the resulting integrators for the underlying 
state equations, as well as for the adjoint equations, are not necessarily (forced) symplectic discretisations. The necessary modifications to the new Lagrangian theory and its consequences to its geometry will be 
present in \cite{Konopik25b} while numerical studies will be presented in its discrete follow-up.\\
In Remark \ref{rmk:on_equivalence_hypotheses} we mentioned that we believe that small-time local controllability was key to show the that Hypothesis \ref{itm:hyp1} of Theorem \ref{thm:equivalence_no_u} holds. Further analysis will be required to solidify this into a theorem.
Of further interest are the implications of the results of Offen and McLachlan \cite{mclachlan2019symplectic} on symplectic integration of boundary value problems in the optimal control context.
Lastly, 
we would like to extend the formulation to more general integration schemes like partitioned Runge-Kutta or Galerkin methods with which one can derive high-order integrators.

\section*{Acknowledgements}
The authors acknowledge the support of Deutsche Forschungsgemeinschaft (DFG) with the projects: LE 1841/12-1, AOBJ: 692092 and OB 368/5-1, AOBJ: 692093.
\section*{authors' contributions}
RS: investigation, methodology, conceptualization, formal analysis, writing - original draft, writing - review \& editing\\
MK: investigation, software, visualisation, formal
 analysis, writing - original draft, writing - review \& editing\\
SM: methodology, conceptualization, writing - review and editing\\
SL, SOB: conceptualisation, supervision, writing - review \& editing, funding acquisition
\section*{Data availability}
The graphs can be plotted and their data for all the graphs generated using the code \cite{Code}.

\begingroup\scriptsize

\makeatletter
\renewcommand\@openbib@code{\itemsep\z@}
\makeatother
\printbibliography
\endgroup
\end{document}